\numberwithin{equation}{section}
\theoremstyle{plain}
\newtheorem{thm}[equation]{Theorem}
\newtheorem{cor}[equation]{Corollary}
\newtheorem{lem}[equation]{Lemma}
\newtheorem{prop}[equation]{Proposition}
\theoremstyle{definition}
\theoremstyle{remark}
\newtheorem{claim}[equation]{Claim}
\DeclareMathOperator{\id}{id}
\DeclareFontFamily{U}{wncy}{}
\DeclareFontShape{U}{wncy}{m}{n}{%
<5>wncyr5%
<6>wncyr6%
<7>wncyr7%
<8>wncyr8%
<9>wncyr9%
<10>wncyr10%
<11>wncyr10%
<12>wncyr6%
<14>wncyr7%
<17>wncyr8%
<20>wncyr10%
<25>wncyr10}{}
\DeclareMathAlphabet{\cyr}{U}{wncy}{m}{n}
\begin{document}

\title[Bounding diameter of conical K\"{a}hler metric]
{Bounding diameter of conical K\"{a}hler metric}

\author{Yan Li}

\address{Yan Li \newline School of Mathematical Sciences, \newline Capital Normal University,
\newline 105 Xisanhuanbeilu, \newline 100048 Beijing, China}
\email{liyandota@hotmail.com}

\date{\today}

\maketitle

\begin{abstract}
In this paper we research the differential geometric and algebro-geometric properties of the noncollasping limit in the
conical continuity equation which generalize the theory in \cite{NTZ}.

\end{abstract}

\section{Introduction}
The Ricci flow proposed by Hamilton in \cite{H} has been one of the most powerful tools in geometric analysis with the solution of Poincar\'{e} conjecture. Similarly, the K\"{a}hler Ricci flow has also become an fundamental tool in the study of K\"{a}hler geometry for many years. J. Song, G. Tian and their collaborators (\cite{TZ}, \cite{ST1}, \cite{SW1}, \cite{SW2}, \cite{SY}, \cite{ST2}, \cite{ST3}, \cite{T}) developed the Analytic Minimal Model Program through K\"{a}hler Ricci flow. However, in studying the singularity formation of the K：ahler-Ricci flow there are some difficulties because we do not know how to control the lower bound for the Ricci curvature along the flow. To overcome these difficulties, in \cite{NT}, G. La Nave and G. Tian introduced a new continuity equation. In \cite{NTZ}, G. La Nave, G. Tian and Z. L. Zhang investigated the differential geometric and algebro-geometric properties of the noncollapsing limit in the continuity method. Properties of the continuity equation in \cite{NT} are very similar with properties of the K\"{a}hler Ricci flow.

On the other hand, conical K\"{a}hler-Einstein metric plays an essential role in recent great progress about Yau-Tian-Donaldson conjecture, see \cite{CDS1} \cite{CDS2} \cite{CDS3} \cite{T1}. In \cite{CGP} and \cite{GP}, H. Guenancia and M. P\'{a}un constructed smooth approximation of conical metric. Recently, L. M. Shen in \cite{S} got a result that is about maximal time existence of unnormalized conical K\"{a}hler Ricci flow. Therefore, a natural problem is that what properties the conical version of continuity equations have.

In this paper we generalize the theory in \cite{NTZ} to the conical version and research the differential and algebro-geometric properties of the limit in the conical continuity equation. We will focus on the noncollasping case.

To begin with, we assume that $M$ is a projective manifold with a K\"{a}hler metric $\omega_{0}\in c_{1}(L^{'})$, where $L^{'}$ is a line bundle on $M$. Let $D$ be a smooth hypersurface in $M$ and $\beta \in (0,1)$. We consider the 1-parameter family of equations:
\begin{equation}
    \omega=\omega_{0}-t(Ric(\omega)-(1-\beta)[D]),
\end{equation}
where $[D]$ is the current of integration along $D$. Clearly, the K\"{a}hler classes vary according to the linear relation: $[\omega]=[\omega_{0}]-t(c_{1}(M)-(1-\beta)c_{1}(L_{D}))$, where $[\omega]$ denotes the K\"{a}hler class of $\omega$ and
$c_{1}(L_{D})$ denotes the first Chern class of line bundle $L_{D}$ associated with hypersurface $D$.

Our first theorem is:

\begin{thm}
For any initial K\"{a}hler metric $\omega_{0}$, there is a unique singular family of solution $\omega_{t}$ for (1.1) on $M\times[0,T)$,where
\begin{equation}
   T=sup\{t|[\omega_{0}]-t(c_{1}(M)-(1-\beta)c_{1}(L_{D}))>0\}.
\end{equation}
and each $\omega_{t}$ is a conical metric.
\end{thm}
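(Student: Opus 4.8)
The plan is to reduce (1.1) to a family, parametrised by $t$, of scalar complex Monge--Amp\`ere equations with a mildly singular (but $L^p$ with $p>1$) density, to solve each such equation by pluripotential theory, and then to upgrade the weak solution to a genuine conical metric by means of the regularity theory for conical K\"ahler--Einstein metrics.

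Fix a smooth volume form $\Omega$ on $M$ with $Ric(\Omega):=-i\partial\bar\partial\log\Omega$ a smooth representative of $c_1(M)$, a smooth Hermitian metric $h$ on $L_D$ with curvature $\Theta_h\in c_1(L_D)$, and a holomorphic section $s$ of $L_D$ with divisor $D$; by the Poincar\'e--Lelong formula, $(1-\beta)[D]=(1-\beta)\Theta_h+i\partial\bar\partial\log|s|_h^{2(1-\beta)}$ as currents. For $t\in[0,T)$ the class $[\omega_0]-t\bigl(c_1(M)-(1-\beta)c_1(L_D)\bigr)$ is K\"ahler, hence represented by a smooth K\"ahler form $\theta_t$. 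Writing a candidate solution as $\omega_t=\theta_t+i\partial\bar\partial u_t$ and substituting $Ric(\omega_t)=Ric(\Omega)-i\partial\bar\partial\log(\omega_t^n/\Omega)$ into (1.1), one finds that (1.1) is equivalent --- after using that an $i\partial\bar\partial$-closed function on the compact manifold $M$ is constant and absorbing smooth factors into the reference data --- to the conical Monge--Amp\`ere equation
\[
(\theta_t+i\partial\bar\partial u_t)^n=\frac{e^{u_t/t}}{\,|s|_h^{2(1-\beta)}\,}\,\Omega,\qquad \theta_t+i\partial\bar\partial u_t>0 .
\]
Since $\beta\in(0,1)$, the density $|s|_h^{-2(1-\beta)}$ lies in $L^p(M,\Omega)$ for every $p<\tfrac{1}{1-\beta}$, in particular for some $p>1$; note also that $u_t$ enters the right-hand side with the favourable sign, since $t>0$.

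For each fixed $t\in(0,T)$ a bounded, continuous $\theta_t$-plurisubharmonic solution $u_t$ exists by Ko\l odziej's $L^\infty$ estimate together with the solvability of complex Monge--Amp\`ere equations with $L^p$ density (Eyssidieux--Guedj--Zeriahi), and it is unique by the comparison principle: at a point where $u_t^{(1)}-u_t^{(2)}$ attains its maximum one has $\omega_t^{(1)}\le\omega_t^{(2)}$, hence $e^{u_t^{(1)}/t}\le e^{u_t^{(2)}/t}$, hence $u_t^{(1)}\le u_t^{(2)}$ there, and symmetrically. The same monotonicity makes the linearization $v\mapsto\Delta_{\omega_t}v-t^{-1}v$ injective, so by the implicit function theorem, in the appropriate conical H\"older spaces, the family $\{\omega_t\}$ varies continuously in $t\in(0,T)$. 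Conversely, taking cohomology classes in (1.1) shows that any conical K\"ahler solution has $[\omega_t]=[\omega_0]-t\bigl(c_1(M)-(1-\beta)c_1(L_D)\bigr)$, which must then be a K\"ahler class, forcing $t<T$; this pins $T$ down exactly as in the statement and yields maximality.

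It remains to show, for $t\in(0,T)$ (the case $t=0$ being the given metric $\omega_0$), that $u_t$ is smooth on $M\setminus D$ and that $\omega_t$ is a conical metric of cone angle $2\pi\beta$ along $D$. On $M\setminus D$ the density is smooth and positive, so $\omega_t$ is a smooth K\"ahler metric there by the standard elliptic bootstrap for the (uniformly elliptic) Monge--Amp\`ere equation. Near $D$ we argue as in the conical K\"ahler--Einstein constructions of Guenancia--P\'aun \cite{GP,CGP} and of Jeffres--Mazzeo--Rubinstein: approximate $|s|_h^{-2(1-\beta)}$ by the smooth densities $(\varepsilon^2+|s|_h^2)^{-(1-\beta)}$, solve the resulting smooth Monge--Amp\`ere equations, and establish $\varepsilon$-uniform zeroth- and second-order estimates comparing the approximate solutions with the model cone metric $\omega_{\mathrm{cone}}$ (locally $\sum_{j<n}i\,dz_j\wedge d\bar z_j+|z_n|^{2\beta-2}i\,dz_n\wedge d\bar z_n$); letting $\varepsilon\to0$ gives $C^{-1}\omega_{\mathrm{cone}}\le\omega_t\le C\omega_{\mathrm{cone}}$, and a further bootstrap in the conical H\"older spaces $C^{2,\alpha,\beta}$ yields the asserted conical structure. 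This last step is the main obstacle: producing the a priori estimates uniformly up to the cone locus $D$, and checking that they are locally uniform in $t\in(0,T)$ with the degeneration of the factor $t^{-1}$ under control as $t\to0^+$. Once these estimates are in hand, the remaining arguments parallel the smooth continuity-equation theory of \cite{NTZ}.
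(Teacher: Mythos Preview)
Your reduction to the scalar conical Monge--Amp\`ere equation and appeal to \cite{GP} (equivalently to Ko\l odziej/EGZ for a weak solution, followed by conical regularity) are essentially what the paper does; the paper simply cites Theorem A of \cite{GP} for existence in one line, and your extra discussion of the $\varepsilon$-regularisation, while not needed here, is harmless.

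The one genuine gap is in your uniqueness argument. You write ``at a point where $u_t^{(1)}-u_t^{(2)}$ attains its maximum one has $\omega_t^{(1)}\le\omega_t^{(2)}$''; but the solutions are conical, not smooth, so the maximum could a priori lie on $D$, where you cannot take the complex Hessian in the classical sense and hence cannot deduce $\omega_t^{(1)}\le\omega_t^{(2)}$. The paper (following Jeffres \cite{Je}) repairs this by introducing an auxiliary function $F_k=\tfrac{1}{k}|s_D|_{h_D}^{2p}$ with $0<2p<\beta$: one checks $\sqrt{-1}\partial\bar\partial F_k\ge -\tfrac{p}{k}|s_D|_{h_D}^{2p}\Theta_{h_D}$, and the maximum of $v_k=v+F_k$ (with $v=u_1-u_2$) must lie in $M\setminus D$. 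Applying the maximum principle at that interior point and letting $k\to\infty$ gives $v\le 0$, and symmetrically $v\ge 0$. You should insert this barrier argument (or else invoke a pluripotential-theoretic comparison principle valid for bounded psh solutions) in place of the bare maximum-principle sentence.

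A small cosmetic remark: the paper writes the potential as $\omega_t=\widetilde{\omega_t}+t\sqrt{-1}\partial\bar\partial u$ with right-hand side $e^{u}\,\Omega/|s_D|_{h_D}^{2(1-\beta)}$, which avoids the $e^{u_t/t}$ degeneration you flag as an obstacle near $t=0$. With that normalisation the $t\to0^+$ issue disappears, so you can drop that caveat.
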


If $T<\infty$, we need to examine the limit of $\omega_{t}$ as $t$ tends to $T$. We have the following result if  $\omega_{t}$ is noncollapsing.
\begin{thm}
Assume that $([\omega_{0}]-T(c_{1}(M)-(1-\beta)c_{1}(L_{D})))^{n}>0$, where $n=dim_{\mathbb{C}}M$, then $\omega_{t}$ converge to a unique weakly K\"{a}hler metric $\omega_{T}$ such that $\omega_{T}$ is smooth on $M\backslash (\mathcal{S}_{M}\cup D)$ and satisfies:
$$
  \omega_{T}=\omega_{0}-TRic(\omega_{T}), \ on \ M\backslash (\mathcal{S}_{M}\cup D),
$$
where
$$
\mathcal{S}_{M}=\bigcap\{F|F \ is \ a \ disivior \ satisfying \ [\omega_{0}]-T(c_{1}(M)-(1-\beta)c_{1}(L_{D}))-\rho[F]>0 \ for \ some \ \rho>0  \}.
$$
\end{thm}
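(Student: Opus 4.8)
The plan is to translate the conical continuity equation (1.1) into a one‑parameter family of complex Monge--Amp\`ere equations carrying a fixed conical (edge) singularity along $D$, to establish a priori estimates uniform in $t$ up to $t=T$, to pass to the limit in the sense of pluripotential theory on the big class $\alpha_T:=[\omega_0]-T(c_1(M)-(1-\beta)c_1(L_D))$, and then to bootstrap interior regularity on $M\setminus(\mathcal{S}_M\cup D)$. Fix a smooth Hermitian metric $h$ on $L_D$ with defining section $s_D$, and reference forms $\chi_t=\omega_0-t\theta$ with $\theta$ a fixed smooth representative of $c_1(M)-(1-\beta)c_1(L_D)$. Writing $\omega_t=\chi_t+\sqrt{-1}\partial\bar\partial\varphi_t$, equation (1.1) is equivalent (take $-t\,Ric$ of both sides and integrate the resulting $\partial\bar\partial$-identity, using the Lelong--Poincar\'e formula to absorb $(1-\beta)[D]$ into the weight $|s_D|_h^{-2(1-\beta)}$) to
\[
(\chi_t+\sqrt{-1}\partial\bar\partial\varphi_t)^n=e^{\frac{\varphi_t}{t}+F}\,\frac{\Omega}{|s_D|_h^{2(1-\beta)}},
\]
with $\Omega$ a fixed smooth volume form and $F$ a fixed smooth function. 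For $t<T$ this is solved by Theorem 1.2; what remains is the behaviour as $t\uparrow T$.

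The first analytic step is a zeroth-order bound uniform in $t$. As $t\uparrow T$ the class $\alpha_t$ converges to $\alpha_T$, which is nef (a limit of K\"ahler classes) and, by the hypothesis $\alpha_T^n>0$, big; in particular $\int_M\omega_t^n=\alpha_t^n\ge c_0>0$ uniformly. Since $\beta\in(0,1)$, the density $|s_D|_h^{-2(1-\beta)}\Omega$ lies in $L^p(M)$ for some $p>1$, so the Ko\l odziej / Boucksom--Eyssidieux--Guedj--Zeriahi $L^\infty$-estimate for the complex Monge--Amp\`ere operator gives $\|\varphi_t\|_{C^0(M)}\le C$ independently of $t$. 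Differentiating the equation in $t$ and invoking the maximum principle, as in \cite{NTZ}, yields a uniform bound on $\partial_t\varphi_t$, hence uniform convergence $\varphi_t\to\varphi_T$ with $\varphi_T\in\mathrm{PSH}(M,\chi_T)\cap L^\infty(M)$. By stability of the non-pluripolar Monge--Amp\`ere operator on big classes, $\varphi_T$ solves
\[
\bigl\langle(\chi_T+\sqrt{-1}\partial\bar\partial\varphi_T)^n\bigr\rangle=e^{\frac{\varphi_T}{T}+F}\,\frac{\Omega}{|s_D|_h^{2(1-\beta)}},
\]
and uniqueness of $\varphi_T$ in $\mathrm{PSH}(M,\chi_T)\cap L^\infty(M)$ follows from the comparison principle; this defines the weakly K\"ahler metric $\omega_T=\chi_T+\sqrt{-1}\partial\bar\partial\varphi_T$.

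Interior regularity is next. By construction $\mathcal{S}_M$ is exactly the locus off which $\alpha_T$ admits local strictly positive smooth representatives: using bigness of $\alpha_T$ there is a quasi-psh $\psi$ with analytic singularities such that $\chi_T+\sqrt{-1}\partial\bar\partial\psi$ is a K\"ahler current, smooth and strictly positive on $M\setminus\mathcal{S}_M$ (one identifies the non-K\"ahler locus with $\mathcal{S}_M$, the augmented base locus of $\alpha_T$). Fix a compact $K\Subset M\setminus(\mathcal{S}_M\cup D)$: on a neighbourhood of $K$ the density $\Omega/|s_D|_h^{2(1-\beta)}$ is smooth and bounded above and below, and $\omega_t$ dominates a fixed K\"ahler metric after subtracting a bounded potential, so the Monge--Amp\`ere equation is uniformly elliptic there. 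Combined with the uniform $C^0$ bound, the Aubin--Yau / Chern--Lu second-order inequality gives a uniform $C^2$ bound on $K$, after which Evans--Krylov and Schauder estimates give uniform $C^\infty(K)$ bounds. Hence $\omega_t\to\omega_T$ in $C^\infty_{\mathrm{loc}}(M\setminus(\mathcal{S}_M\cup D))$, and letting $t\uparrow T$ in (1.1) shows $\omega_T=\omega_0-T\,Ric(\omega_T)$ there.

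The main obstacle, and the point where the argument genuinely differs from \cite{NTZ}, is carrying out all of this with the conical singularity along $D$ present simultaneously with the degeneration of the K\"ahler class. I expect the cleanest route is the Guenancia--P\'aun (and Campana--Guenancia--P\'aun) regularization: replace $|s_D|_h^{-2(1-\beta)}$ by $(|s_D|_h^2+\varepsilon^2)^{-(1-\beta)}$, solve the resulting smooth continuity equations, and prove the $L^p$-density estimate, the $C^0$ bound, and the interior $C^2$ and higher-order estimates all uniformly in $(t,\varepsilon)$, before sending $\varepsilon\to0$ and then $t\uparrow T$. The delicate parts are (i) making the Laplacian estimate near $D$ but away from $\mathcal{S}_M$ uniform in $\varepsilon$, which needs a barrier built from the regularized conical potential, and (ii) verifying that the limit $\omega_T$ has genuine conical behaviour transverse to $D$ on $M\setminus\mathcal{S}_M$, i.e.\ that the two limits commute and the edge structure persists.
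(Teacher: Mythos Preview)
Your outline matches the paper's strategy closely: reduce to a scalar Monge--Amp\`ere equation, apply the Guenancia--P\'aun smoothing of the conical weight, use Kodaira's lemma to trade bigness of $\alpha_T$ for a divisor $E$ (your quasi-psh $\psi$ with analytic singularities) so that the reference form is genuinely K\"ahler on $M\setminus E$, derive uniform $C^0$ and Laplacian bounds, bootstrap, and finally check uniqueness of the limit via a monotonicity argument in $t$. The minor differences are cosmetic: the paper gets the $C^0$ bound by the maximum principle (upper bound directly, lower bound on $w_\epsilon=tv_\epsilon-\iota\log|\sigma_E|^2_{h_E}$) rather than via Ko\l odziej/BEGZ, and it passes to the limit by direct local $C^\infty$ convergence rather than invoking stability of the non-pluripolar product.

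The genuine gap is that you stop precisely where the work begins. What you flag as ``delicate part (i)'' is the entire content of the paper's proof: a uniform-in-$(t,\epsilon)$ Laplacian estimate of the form
\[
C^{-1}|\sigma_E|^{2\alpha(n-1)+2\iota/t}_{h_E}\,\widetilde{\omega}_{t_0,E,\epsilon}\ \le\ \omega_{t,E,\epsilon}\ \le\ \frac{C}{|\sigma_E|^{2\alpha}_{h_E}}\,\widetilde{\omega}_{t_0,E,\epsilon},
\]
and this is not a routine Aubin--Yau computation. The reference metric $\widetilde{\omega}_{t_0,E,\epsilon}$ is the regularized conical metric, whose holomorphic bisectional curvature is \emph{not} bounded below by a constant; one only has the Guenancia--P\'aun bound $R_{i\bar i k\bar k}\ge -(C+(\Psi_{\epsilon,\rho})_{i\bar i})$ with $\Psi_{\epsilon,\rho}=C\chi_\rho(\epsilon^2+|s_D|^2_{h_D})$. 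The paper absorbs these bad curvature terms by adding $\Psi_{\epsilon,\rho}$ (twice) into the quantity whose Laplacian is computed, and then uses $-\frac{1+C}{C'}w_\epsilon$ (which carries the $\log|\sigma_E|^2$ singularity) as the barrier that forces the maximum into $M\setminus E$ and produces the $|\sigma_E|^{-2\alpha}$ weight. Your sentence ``$\omega_t$ dominates a fixed K\"ahler metric after subtracting a bounded potential, so the Monge--Amp\`ere equation is uniformly elliptic there'' is circular: that lower bound is exactly what the Laplacian estimate is meant to establish. Also, your point (ii) is unnecessary for this theorem---the statement only asserts smoothness on $M\setminus(\mathcal{S}_M\cup D)$, not conical behaviour along $D$---and you should make explicit that smoothness off $\mathcal{S}_M\cup D$ (rather than off $E\cup D$ for one fixed $E$) comes from running the argument for every admissible $E$ and using that the limit $u_T$ is independent of the choice.
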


In \cite{NTZ}, the limit space which $(M,\omega_{t})$ converge to in the Gromov-Hausdorff topology has more regular properties, such as metric structure, algebraic structure. In the conical situation, we also have similar properties.
\begin{thm}
Assume as in above theorem, $\beta\in \mathbb{Q}\cap(0,1)$ and $c_{1}(L_{D})$ is semi-positive, then
\begin{enumerate}
\item  $(M,\omega_{t})$ converges in the Gromov-Hausdorff topology to a compact path metric space $(M_{T},d_{T})$ which is the metric completion $(M\backslash (\mathcal{S}_{M}\cup D),\omega_{T})$;  \\
\item  $M_{T}$ has regular part and singular part, i.e. $M_{T}=\mathcal{R}\cup \mathcal{S}$, a point $x \in   \mathcal{R}$ if and only if the tangent cone at $x$ is $\mathbb{C}^{n}$;\\
\item  $\mathcal{S}$ is closed and has real codimension $\geq 2$ and $\mathcal{R}$ is geodesically convex; \\
\item  $M_{T}$ is homeomorphic to a normal projective variety with $\mathcal{S}$ corresponding to a subvariety.
\end{enumerate}
\end{thm}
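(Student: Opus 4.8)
The plan is to carry out, in the conical setting, the Cheeger--Colding--Tian structure theory for non-collapsed Ricci limit spaces together with the Donaldson--Sun polarisation argument, the key analytic input being that along the continuity method the Ricci curvature stays bounded below. Rewriting (1.1) as $\mathrm{Ric}(\omega_{t})=\frac{1}{t}(\omega_{0}-\omega_{t})+(1-\beta)[D]$ shows that, away from $D$, $\mathrm{Ric}(\omega_{t})\geq-\frac{1}{t}\omega_{t}$, uniformly for $t$ in a compact subinterval of $(0,T]$. To make this precise across the cone along $D$ I would replace $\omega_{t}$ by the Guenancia--P\'{a}un and Campana--Guenancia--P\'{a}un smooth approximations $\omega_{t,\varepsilon}$ (cf. \cite{CGP}, \cite{GP}), which enjoy a uniform lower Ricci bound $\mathrm{Ric}(\omega_{t,\varepsilon})\geq-C\omega_{t,\varepsilon}$ precisely because the cone angle $2\pi\beta$ is less than $2\pi$. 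The non-collapsing hypothesis $([\omega_{0}]-T(c_{1}(M)-(1-\beta)c_{1}(L_{D})))^{n}>0$, together with the cohomological identity $[\omega_{t}]^{n}\to([\omega_{T}])^{n}>0$, gives a uniform two-sided bound on the volume. The first genuine task --- the one the title advertises --- is the uniform diameter bound $\mathrm{diam}(M,\omega_{t})\leq C$ for $t\in[t_{0},T)$; I would derive it by a Green's function / Moser iteration argument in the presence of the lower Ricci bound and the volume control, along the lines of \cite{NTZ}, performed on the approximations $\omega_{t,\varepsilon}$ and then passed to the limit.

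Granting the uniform lower Ricci bound, the diameter bound, and non-collapsing, Gromov compactness gives subsequential Gromov--Hausdorff convergence $(M,\omega_{t_{i}})\to(M_{T},d_{T})$. Since, by the preceding theorem, $\omega_{t}\to\omega_{T}$ smoothly on the open set $U:=M\backslash(\mathcal{S}_{M}\cup D)$, a standard comparison argument identifies $(M_{T},d_{T})$ with the metric completion of $(U,\omega_{T})$ and shows the limit is independent of the subsequence; compactness of $M_{T}$ and the path-metric property follow at once, giving (1). Items (2) and (3) are then the Cheeger--Colding--Tian theory: one gets $M_{T}=\mathcal{R}\cup\mathcal{S}$ with $x\in\mathcal{R}$ iff some tangent cone at $x$ is $\mathbb{C}^{n}$; for such $x$, Cheeger--Colding's almost-rigidity theorem shows $x$ is a manifold point, so the tangent cone is unique and equal to $\mathbb{C}^{n}$; $\mathcal{S}$ is closed with real Hausdorff codimension $\geq 2$; and the geodesic convexity of $\mathcal{R}$ follows from the Colding--Naber estimate on the behaviour of the geometry along minimal geodesics, which forces a minimizing geodesic between regular points to stay in $\mathcal{R}$.

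For the algebraic structure (4), the assumption $\beta\in\mathbb{Q}\cap(0,1)$ (together with the tacit rationality of $[\omega_{0}]$) makes some positive multiple $L$ of the class $[\omega_{T}]=[\omega_{0}]-T(c_{1}(M)-(1-\beta)c_{1}(L_{D}))$ an honest line bundle, and the semi-positivity of $c_{1}(L_{D})$, combined with the limiting equation $\omega_{T}=\omega_{0}-T\,\mathrm{Ric}(\omega_{T})$, lets one regard $\omega_{T}$ (up to scale) as a positively curved singular metric on $L$. I would then establish a partial $C^{0}$-estimate adapted to the conical continuity equation: a uniform positive lower bound for the Bergman density $\rho_{k}$ of $H^{0}(M,kL)$ with respect to the metric induced by $\omega_{t}$, proved via H\"{o}rmander $L^{2}$-estimates and H\"{o}lder/heat-kernel bounds on the approximations $\omega_{t,\varepsilon}$. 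This makes the Kodaira-type maps $\Phi_{k}$ uniformly Lipschitz with uniformly bounded image, so that in the limit one obtains a map $\Phi_{\infty}\colon M_{T}\to\mathbb{P}^{N_{k}}$ which is a homeomorphism onto a normal projective variety and carries $\mathcal{S}$ onto a proper subvariety --- this is the Donaldson--Sun argument, valid once the partial $C^{0}$-estimate and a slicing/gap lemma for $\mathcal{S}$ are available in this setting.

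The principal obstacle I expect is the partial $C^{0}$-estimate in the conical setting together with uniform control near $D$: the Donaldson--Sun and Tian machinery is built for smooth metrics, so one must check that the Guenancia--P\'{a}un approximations carry enough uniformity --- H\"{o}lder bounds, heat-kernel estimates, and $L^{2}$ section-extension with controlled constants --- to survive the double limit $\varepsilon\to0$, $t\to T$, and that $D$ contributes nothing to the metric completion beyond what $\mathcal{S}$ already records. A secondary difficulty is proving that the metrically defined singular set $\mathcal{S}$ coincides with an analytic subvariety; this I would obtain by combining the partial $C^{0}$-estimate with the weak regularity of $\omega_{T}$ supplied by the preceding theorem.
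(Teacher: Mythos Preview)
Your broad outline for parts (1)--(3) --- Guenancia--P\'{a}un approximations $\omega_{t,\epsilon}$, the resulting uniform lower bound $\mathrm{Ric}(\omega_{t,\epsilon})\ge -\frac{1}{t}\omega_{t,\epsilon}$ (this is exactly where semi-positivity of $c_1(L_D)$ is used), Gromov--Hausdorff limits, and the Cheeger--Colding--Tian package together with Colding--Naber for geodesic convexity --- matches the paper. The substantive divergence is in part (4) and in the role of the diameter bound.

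The paper does \emph{not} prove a partial $C^0$-estimate and does \emph{not} run the Donaldson--Sun programme. Since $\beta\in\mathbb{Q}$ and the limit class is nef and big, Kawamata's rationality theorem gives $T\in\mathbb{Q}$, and the base-point-free theorem supplies a holomorphic map $\Phi:M\to\mathbb{C}P^N$ \emph{for free} from algebraic geometry; the task is only to show that the induced map $\Phi_T$ on the limit is a homeomorphism onto $\Phi(M)$. Injectivity and the local-homeomorphism property come from Moser-iteration $L^\infty$ and gradient bounds for holomorphic sections of the fixed bundle $L$ (Bochner formula plus the Schwarz-lemma estimate $\eta_T\le C\omega_{t,\epsilon}$); surjectivity --- and with it compactness of $M_T$ and the diameter bound --- follows from Song's blow-up estimate, which controls $\pi^*\omega_T$ near each bad point by $C\,|\sigma_E|_{h_E}^{-2(1-\alpha)}\prod_i|f_i|^{-2\lambda}\chi$ for some $\alpha\in(0,1)$. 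So in the paper the diameter bound is a \emph{consequence} of the algebraic identification, not an input to it (the GH limit is initially taken only pointed and possibly noncompact), and the hard analysis is a second-order estimate on a blow-up rather than H\"{o}rmander $L^2$-theory. Your Donaldson--Sun route is in principle viable and more in the spirit of \cite{CDS2,CDS3,T1}, but it is strictly harder here: you would have to manufacture peak sections, whereas the paper borrows them from the base-point-free theorem. Also note that your proposed diameter argument (``Green's function / Moser iteration'' from the Ricci lower bound and volume control) is not what \cite{NTZ} actually does --- it too uses Song's blow-up method --- and those two ingredients alone do not bound a diameter.
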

\textbf{Acknowledgement:}The author would like to thank his advisor Prof. F.Q.Fang, for his
constant help, support and encouragement over the years. He also wants to thank Prof. Z.l.Zhang for his careful reading of a preprint of
this paper and many helpful suggestions and discussions. Finally, he thanks Prof. Y.Yuan for his enthusiastic discussion.

\section{Existence and uniqueness of conical continuity equation}
\subsection{Proof of Theorem 1.2}
 First we reduce (1.1) to a scalar equation. Choose a real closed (1, 1) form $\psi$  representing $c_{1}(M)$ and a smooth volume form $\Omega$ such that $Ric(\Omega)=\psi$. Let $L_{D}$ be a line bundle with a Hermitian metric $h_{D}$ and $s_{D}$ be a defining section of $L_{D}$. $\Theta_{h_{D}}$ represents the curvature of $L_{D}$. By Poincar\'{e}-Lelong formula, we have

$$
  \sqrt{-1}\partial\overline{\partial}\log|s_{D}|^{2}_{h_{D}}=-\Theta_{h_{D}}+[D].
$$

Set $\widetilde{\omega_{t}}=\omega_{0}-t(\psi-(1-\beta)\Theta_{h_{D}})$ for $t \in [0,T)$. One can easily show that $\omega_{t}=\widetilde{\omega_{t}}+t\sqrt{-1}\partial\overline{\partial}u$ satisfies (1.1) if $u$ satisfies
\begin{equation}
(\widetilde{\omega_{t}}+t\sqrt{-1}\partial\overline{\partial}u)^{n}=e^{u}\frac{\Omega}{|s_{D}|^{2(1-\beta)}_{h_{D}}},
\end{equation}
where $\widetilde{\omega_{t}}+t\sqrt{-1}\partial\overline{\partial}u>0$.
\begin{prop}
The equation (2.1) is solvable for each $t \in [0, T)$.
\end{prop}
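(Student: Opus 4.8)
The plan is to reduce equation (2.1), for each fixed $t\in(0,T)$, to a complex Monge--Amp\`ere equation whose nonlinear term has the favourable sign, and then to solve it by the smooth approximation scheme of Guenancia--P\'aun (\cite{CGP}, \cite{GP}) together with \emph{a priori} estimates uniform in the approximation parameter. The case $t=0$ is immediate, since (2.1) then reads $\widetilde\omega_0^{\,n}=e^u\,\Omega/|s_D|^{2(1-\beta)}_{h_D}$ and $u$ is determined pointwise. For $t>0$, since $[\widetilde\omega_t]=[\omega_t]$ is a K\"ahler class we may fix a genuine K\"ahler form $\chi_t$ in it, write $\widetilde\omega_t=\chi_t+\sqrt{-1}\partial\overline{\partial}f_t$ with $f_t$ smooth, and set $\varphi=f_t+tu$; then (2.1) becomes
\[
(\chi_t+\sqrt{-1}\partial\overline{\partial}\varphi)^n=e^{\varphi/t}\,F_t,\qquad F_t:=e^{-f_t/t}\,\frac{\Omega}{|s_D|^{2(1-\beta)}_{h_D}},
\]
with $\chi_t+\sqrt{-1}\partial\overline{\partial}\varphi>0$. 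Here $F_t$ is smooth and positive on $M\setminus D$, comparable to $|s_D|^{-2(1-\beta)}$ along $D$, hence in $L^p$ for some $p>1$ because $\beta>0$. The coefficient $1/t>0$ in the exponent is the good sign: it makes the right-hand side strictly increasing in $\varphi$, so the comparison principle for bounded $\omega$-plurisubharmonic functions gives uniqueness as soon as solutions are known to be bounded.

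For existence I would regularise as in \cite{CGP}, \cite{GP}, \cite{S}: for $\varepsilon>0$ replace $|s_D|^{2(1-\beta)}_{h_D}$ by $(|s_D|^2_{h_D}+\varepsilon^2)^{1-\beta}$ and $\chi_t$ by a K\"ahler form $\chi_{t,\varepsilon}$ in the same class, chosen so that $\chi_{t,\varepsilon}$ converges to a fixed model conical metric $\omega_{\mathrm{cone}}=\chi_t+\delta\sqrt{-1}\partial\overline{\partial}\rho_\beta(|s_D|^2_{h_D})$ ($\delta>0$ small) and so that the holomorphic bisectional curvatures of $\chi_{t,\varepsilon}$ are bounded above uniformly in $\varepsilon$. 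Since the exponent coefficient is positive, the smooth equation $(\chi_{t,\varepsilon}+\sqrt{-1}\partial\overline{\partial}\varphi_\varepsilon)^n=e^{\varphi_\varepsilon/t}F_{t,\varepsilon}$ is uniquely solvable by the Aubin--Yau theorem. The estimates needed, all uniform in $\varepsilon$, are: (i) a $C^0$ bound, from the maximum principle at the extrema of $\varphi_\varepsilon$, where $e^{\varphi_\varepsilon/t}$ is squeezed between the infimum and supremum of $\chi_{t,\varepsilon}^n/F_{t,\varepsilon}$, a ratio that stays bounded away from $0$ and $\infty$ uniformly in $\varepsilon$ because near $D$ it is comparable to $(|s_D|^2_{h_D}+\varepsilon^2)^{\beta-1}(|s_D|^2_{h_D}+\varepsilon^2)^{1-\beta}=O(1)$; (ii) a second-order (Laplacian) estimate of Chern--Lu / Aubin--Yau type comparing $\omega_\varepsilon:=\chi_{t,\varepsilon}+\sqrt{-1}\partial\overline{\partial}\varphi_\varepsilon$ with $\omega_{\mathrm{cone}}$, using (i) and the uniform upper curvature bound on $\chi_{t,\varepsilon}$, yielding $C^{-1}\omega_{\mathrm{cone}}\le\omega_\varepsilon\le C\,\omega_{\mathrm{cone}}$; and (iii) interior higher-order bounds on compact subsets of $M\setminus D$ from Evans--Krylov and Schauder, since there $F_{t,\varepsilon}\to F_t$ in $C^\infty_{\mathrm{loc}}$.

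Letting $\varepsilon\to0$, a subsequence of $\varphi_\varepsilon$ converges in $C^\infty_{\mathrm{loc}}(M\setminus D)$ to a bounded $\varphi$ solving the reduced equation on $M\setminus D$, with $C^{-1}\omega_{\mathrm{cone}}\le\chi_t+\sqrt{-1}\partial\overline{\partial}\varphi\le C\,\omega_{\mathrm{cone}}$. The two-sided bound against the model cone, the boundedness of $\varphi$, and the linear Schauder theory in the conical H\"older spaces (Donaldson, Jeffres--Mazzeo--Rubinstein) promote this to a genuine conical K\"ahler metric of cone angle $2\pi\beta$ along $D$; undoing the substitution, $u=(\varphi-f_t)/t$ solves (2.1) and $\omega_t=\widetilde\omega_t+t\sqrt{-1}\partial\overline{\partial}u$ is a conical metric. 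Uniqueness is the comparison principle noted above.

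I expect the main obstacle to be step (ii): running the maximum principle for $\log\operatorname{tr}_{\omega_\varepsilon}\omega_{\mathrm{cone}}$ with a reference metric that degenerates along $D$, and keeping the estimate uniform in $\varepsilon$. This closes only because the Guenancia--P\'aun approximants are engineered so that the single potentially bad curvature term is absorbed — the same mechanism that underlies the construction of conical K\"ahler--Einstein metrics — whereas the $C^0$ bound, the interior regularity, the passage to the limit, and the uniqueness are all routine.
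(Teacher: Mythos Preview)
Your proposal is correct and follows the same route as the paper: both rely on the Guenancia--P\'aun theory in \cite{GP}. The difference is only in level of detail---the paper disposes of the proposition in a single sentence, citing Theorem~A of \cite{GP} as a black box, whereas you have unpacked the content of that theorem (the $\varepsilon$-regularisation, the uniform $C^0$ and Laplacian estimates against the model cone, and the passage to the limit). Your identification of the second-order estimate as the delicate step is exactly the point of \cite{GP}. One minor remark: the paper treats uniqueness separately in the next proposition via Jeffres' argument, so you need not include it here.
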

This proposition is obvious according to Theorem A in \cite{GP}.\\

In \cite{GP}, H. Guenancia and M. P\'{a}un introduced that for any $\epsilon>0$, the function $\chi_{\beta}:[\epsilon^2,\infty)\rightarrow \mathbb{R}$ defined
as follows:
$$
 \chi_{\beta}(\epsilon^2+t)=\beta\int_{0}^{r}\frac{(\epsilon^2+r)^{\beta}}{r}
$$
for any $t\geq 0$. There exists a constant $C$ such that $0\leq\chi_{\beta}(t)\leq C$ provided that $t$ belongs to a bounded interval. This function is useful to prove Theorem (1.4).

To prove the uniqueness we argue as Jeffery \cite{Je}.
\begin{prop}
Assume that $u$ is a solution of equation(2.1) such that $\omega_{t}=\widetilde{\omega_{t}}+t\sqrt{-1}\partial\overline{\partial}u$ is a conical metric. Then $u$ is unique.
\end{prop}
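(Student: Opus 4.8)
The plan is to establish uniqueness by a maximum principle argument applied to the difference of two solutions, following the strategy of Jeffres for conical Kähler--Einstein metrics. Suppose $u_1$ and $u_2$ are two solutions of (2.1) for a fixed $t \in [0,T)$, both giving rise to conical metrics $\omega_t^{(i)} = \widetilde{\omega_t} + t\sqrt{-1}\partial\bar\partial u_i$. First I would subtract the two Monge--Ampère equations. Writing the quotient $(\omega_t^{(1)})^n / (\omega_t^{(2)})^n = e^{u_1 - u_2}$ and expanding $(\omega_t^{(1)})^n - (\omega_t^{(2)})^n$ as a telescoping sum $\sum_{j=0}^{n-1} (\omega_t^{(1)})^j \wedge (\omega_t^{(2)})^{n-1-j} \wedge t\sqrt{-1}\partial\bar\partial(u_1 - u_2)$, one obtains a linear elliptic equation for $v := u_1 - u_2$ of the form $t\,\Delta_{\eta} v = (e^{v} - 1)\,\frac{(\omega_t^{(2)})^n}{\text{(mixed volume form)}}$, where $\eta = \frac{1}{n}\sum_j (\omega_t^{(1)})^j \wedge (\omega_t^{(2)})^{n-1-j} \big/ (\text{vol})$ is a positive $(1,1)$-form (a convex combination of the two metrics) and $\Delta_\eta$ is its Laplacian. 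The coefficient multiplying $v$ on the right has a fixed sign, so at an interior maximum of $v$ one gets $v \le 0$, and at an interior minimum $v \ge 0$; hence $v \equiv 0$.

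The key subtlety, and the reason the argument is not immediate, is that the metrics are conical along $D$, so the operator $\Delta_\eta$ degenerates/blows up near $D$ and the classical strong maximum principle does not directly apply. The plan is to work on $M \setminus D$, where everything is smooth, and to control the behavior of $v$ near $D$. Here one uses the precise conical structure: in suitable holomorphic coordinates $(z_1,\dots,z_n)$ with $D = \{z_1 = 0\}$, a conical metric is uniformly equivalent to the model cone metric $|z_1|^{2\beta - 2}\sqrt{-1}dz_1\wedge d\bar z_1 + \sum_{k\ge 2}\sqrt{-1}dz_k\wedge d\bar z_k$, and in particular the local potentials of $\omega_t^{(1)}$ and $\omega_t^{(2)}$ differ by a bounded function that extends continuously (indeed with the right Hölder regularity in the conical sense) across $D$. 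Thus $v$ is a bounded continuous function on all of $M$. I would then either (i) apply a barrier/comparison argument using the function $|s_D|^{2\delta}_{h_D}$ for small $\delta > 0$ as a perturbation to push any extremum of $v$ away from $D$ and into the smooth locus, or (ii) invoke the weak/viscosity form of the comparison principle for degenerate Monge--Ampère equations (as in the pluripotential-theoretic treatments), for which boundedness of $v$ and the fact that both $u_i$ are $\widetilde{\omega_t}$-psh with the prescribed conical singularities suffice.

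The main obstacle I anticipate is precisely the behavior along $D$: one must ensure that no point of $\{v = \max v\}$ or $\{v = \min v\}$ can hide in the singular set where the maximum principle fails, and that the linearized equation genuinely holds in an appropriate weak sense across $D$. The cleanest route is the perturbation argument: for $\delta > 0$ consider $v_\delta = v + \delta \chi_\beta(|s_D|^2_{h_D})$ (using the auxiliary function $\chi_\beta$ recalled above from \cite{GP}, which is bounded and, crucially, whose $\sqrt{-1}\partial\bar\partial$ has a definite sign contribution near $D$), show that $v_\delta$ cannot attain a maximum near $D$, conclude the extremum lies in $M \setminus D$ where the smooth maximum principle gives the sign, and then let $\delta \to 0$. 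Provided the $C^0$ estimates on the potentials from Theorem A of \cite{GP} and the structure of $\chi_\beta$ are in hand, the remaining steps are routine. I should also note that one only needs uniqueness within the class of solutions inducing conical metrics, which is exactly the hypothesis of the proposition, so I do not need to rule out more singular solutions.
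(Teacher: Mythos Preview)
Your plan is essentially the paper's proof: both follow Jeffres, perturbing $v=u_1-u_2$ by a barrier to force the extremum off $D$ and then applying the smooth maximum principle there. The paper implements your option (i), taking $v_k=v+\frac{1}{k}|s_D|^{2p}_{h_D}$ with $2p<\beta$; at the maximum $P_k\in M\setminus D$ it works directly with the determinant form of the Monge--Amp\`ere equation (diagonalizing and using $\sqrt{-1}\partial\bar\partial|s_D|^{2p}_{h_D}\ge -p|s_D|^{2p}_{h_D}\Theta_{h_D}$, hence $-(F_k)_{i\bar i}\le C/k$) rather than your linearized Laplacian, but that is only a cosmetic difference.

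One caution about your ``cleanest route'': the barrier $\delta\,\chi_\beta(|s_D|^2_{h_D})$ is not the right choice. Near $D$ one has $\chi_\beta(|s_D|^2)\sim |s_D|^{2\beta}$, which lies at the \emph{same} H\"older scale as the conical potentials themselves; adding it gives no mechanism to exclude an extremum on $D$. The point of Jeffres' barrier is precisely the exponent condition $2p<\beta$: then $|s_D|^{2p}$ increases off $D$ strictly faster than $v$ (which is $C^{0,\alpha}$ in $|s_D|^{\beta}$) can vary, so the perturbed maximum genuinely moves into $M\setminus D$. Stick with option (i) and record the constraint $2\delta<\beta$ explicitly; the rest of your outline then goes through.
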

\begin{proof}
Assume $u_1$ and $u_2$ are solutions of equation(2.1). Set $v=u_1-u_2$. One immediately obtains  the following equation.
$$
   e^{v}\omega_1^n=(\omega_1+t\sqrt{-1}\partial\overline{\partial}v)^n,
$$
where $\omega_1=\widetilde{\omega_t}+t\sqrt{-1}\partial\overline{\partial}u_1$.
Let $F_k=\frac{1}{k}|s_D|^{2p}_{h_D}(2p<\beta)$ and $v_k=v+F_k$. It is easy to show
$$
  \sqrt{-1}\partial\overline{\partial}|s_D|^{2p}_{h_D}\geq -p|s_D|^{2p}_{h_D}\Theta_{h_D}.
$$
For each $k$, $v_k$ can attain maximum at $P_k \in M\backslash D$. Then at $P_k$ one knows
$$
 e^v(\det g^1_{i\bar{j}})=\det (g^1_{i\bar{j}}+\sqrt{-1}\partial_i\overline{\partial}_{\bar{j}}(v_k-F_k)).
$$
Choose normal coordinate at $P_k$ which simultaneously diagonalize $(g^1_{i\bar{j}})$ and $(\sqrt{-1}\partial_i\overline{\partial}_{\bar{j}}(v_k-F_k))$, i.e. $g^1_{i\bar{j}}(P_k)=\delta_{ij}$ and
$\sqrt{-1}\partial_i\overline{\partial}_{\bar{j}}(v_k-F_k)(P_k)=\delta_{ij}((v_k)_{i\bar{j}}-(F_k)_{i\bar{j}})$.
Notice that $(v_k)_{i\bar{i}}(P_k)\leq 0$. Then one has
\begin{align*}
  e^{v(P_k)} &= \prod_{i=1}^{k}(1+(v_k)_{i\bar{i}}-(F_k)_{i\bar{i}})  \\
             &\leq \prod_{i=1}^{k}(1-(F_k)_{i\bar{i}})\leq \prod_{i=1}^k(1+\frac{C}{k}(\Theta_{h_D})_{i\bar{i}})\\
             &\leq \prod_{i=1}^{k}(1+\frac{A}{k})^n.
\end{align*}
Let $k\rightarrow \infty$, one obtains
$$
   v\leq 0
$$
By the similar argument, one has $v\geq 0$. Therefore, $u_1=u_2$.
\end{proof}

\subsection{Proof of Theorem 1.4}
In this subsection we investigate the regular properties of limit metric.
\begin{lem}
Let $F$ be a divisor in a projective manifold $M$. If $F$ is big, then there is an effective divisor $E$ such that $[F]-\epsilon[E]>0$ for all sufficiently small $\epsilon>0$.
\end{lem}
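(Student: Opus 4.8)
The plan is to deduce this from Kodaira's lemma --- the statement that bigness of $F$ is always witnessed by an ample-plus-effective decomposition of its class. First I would record the standard fact that, since $F$ is big on the projective manifold $M$, after fixing any ample divisor $A$ there exist a positive integer $m$ and an effective divisor $E$ with $mF\sim A+E$ (linear, hence numerical, equivalence). One way to see this is the usual Riemann--Roch count: twisting the ideal sheaf sequence of a member of $|A|$ by $\mathcal{O}_{M}(mF)$ gives
\[
   h^{0}(M,\mathcal{O}_{M}(mF-A))\geq h^{0}(M,\mathcal{O}_{M}(mF))-O(m^{n-1});
\]
as $F$ is big the first term on the right grows at least like $c\,m^{n}$ with $c>0$ for all large $m$, so $h^{0}(M,\mathcal{O}_{M}(mF-A))>0$ once $m\gg 0$, i.e. $mF-A\sim E\geq 0$. (Alternatively one simply invokes Kodaira's lemma as a black box.)

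Next I would just compute. With such an $E$ fixed, in $H^{1,1}(M,\mathbb{R})$ one has $[F]=\tfrac1m[A]+\tfrac1m[E]$, hence
\[
   [F]-\epsilon[E]=\tfrac1m[A]+\Bigl(\tfrac1m-\epsilon\Bigr)[E],
\]
and for every $\epsilon$ with $0<\epsilon<\tfrac1m$ the coefficient $\tfrac1m-\epsilon$ is strictly positive. Pick a K\"{a}hler form $\omega_{A}$ in the ample class $\tfrac1m[A]$ and write $[E]$ also for the current of integration along $E$, which is a closed positive $(1,1)$-current in the class $[E]$ by the Poincar\'{e}--Lelong formula. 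Then $\omega_{A}+\bigl(\tfrac1m-\epsilon\bigr)[E]$ is a closed positive $(1,1)$-current in the class $[F]-\epsilon[E]$ that dominates the K\"{a}hler form $\omega_{A}$; therefore $[F]-\epsilon[E]$ is big, that is $[F]-\epsilon[E]>0$, for all sufficiently small $\epsilon>0$.

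I do not expect a genuine obstacle here: the argument is entirely formal once Kodaira's lemma is available, and the only points needing care are the bookkeeping between integral, $\mathbb{Q}$- and $\mathbb{R}$-divisor classes, the admissible range $\epsilon<1/m$, and the convention that ``$>0$'' for a cohomology class means that it is represented by a K\"{a}hler current (equivalently, that it is big). If for the intended application one also wants control on the support of $E$ --- so that $E$ can feed into the description of $\mathcal{S}_{M}$ --- I would run Kodaira's lemma with $A$ ranging over a cofinal family of ample classes and arrange that $\mathrm{Supp}(E)$ lies in the augmented base locus $\mathbf{B}_{+}(F)$; this refinement is again routine and leaves the elementary positivity computation above untouched.
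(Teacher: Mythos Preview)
The paper does not prove this lemma at all --- it is stated without proof and used immediately, as it is nothing more than a form of Kodaira's lemma. Your argument via Kodaira's lemma is exactly the standard one and is correct.

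One point worth flagging: you explicitly read ``$>0$'' as ``big'' (i.e.\ contains a K\"ahler current), and under that reading your conclusion ``$[F]-\epsilon[E]>0$ for all $0<\epsilon<1/m$'' is fine. The paper, however, uses the lemma in the very next sentence to assert that $[\omega_0]-T(c_1(M)-(1-\beta)c_1(L_D))-\iota[L_E]$ is \emph{ample} for some $\iota$, and elsewhere in the paper ``$>0$'' consistently means ``K\"ahler/ample''. With that reading the phrase ``for all sufficiently small $\epsilon$'' is actually too strong (it would force $F$ to be nef), but only a single value of $\epsilon$ is ever used. Your own computation already supplies this: at $\epsilon=1/m$ you get $[F]-\tfrac1m[E]=\tfrac1m[A]$, which is genuinely ample. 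So your proof covers both the literal statement under the ``big'' interpretation and the application the paper actually makes; just be aware of the discrepancy in conventions.
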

By the assumption of Theorem(1.4) one knows that $[\omega_{0}]-T(c_{1}(M)-(1-\beta)c_{1}(L_{D}))$ is big, then by the above Lemma there is a effective divisor $E$ such that$[\omega_{0}]-T(c_{1}(M)-(1-\beta)c_{1}(L_{D}))-\iota[L_E]$ is ample for some $\iota$. Let $h_E$ be a Hermitian metric on $L_E$ and $\sigma_E$ be a defining section of $E$. Thus by the ampleness of $[\omega_{0}]-T(c_{1}(M)-(1-\beta)c_{1}(L_{D}))-\iota[L_E]$, one knows
$$
  \widetilde{\omega_T}-\iota Ric(h_E)>0.
$$

Now we begin to prove Theorem(1.4).
\begin{proof}
Let $\widetilde{\omega_{t, E}}=\widetilde{\omega_t}+\iota \sqrt{-1}\partial\overline{\partial}\log|\sigma_E|^2_{h_E}$. If $\bar{t}$ is sufficiently small, $\widetilde{\omega_{t, E}}$ is a smooth K\"{a}hler metric on $M\backslash E$ for each $t \in [T-\bar{t}, T+\bar{t}]$. Set $\psi_{\epsilon}=\delta \chi(\epsilon^2+|s_{D}|^{2}_{h_{D}})$ and $\widetilde{\omega_{t, E, \epsilon}}=\widetilde{\omega_{t, E}}+\sqrt{-1}\partial\overline{\partial}\psi_\epsilon$. If $\delta$ is sufficiently small, $\widetilde{\omega_{t, E, \epsilon}}$ is also a smooth K\"{a}hler metric on $M\backslash E$ for all $\epsilon$ and $t \in [T-\bar{t}, T]$. Now we consider the following approximation equation
$$
 (\widetilde{\omega_{t, E, \epsilon}}+\sqrt{-1}\partial\overline{\partial}(tv_{\epsilon}-\iota\log|\sigma_E|^2_{h_E}))^n
 =e^{\frac{\psi_{\epsilon}}{t}+v_{\epsilon}}\frac{\Omega}{(\epsilon^{2}+|s_D|^{2}_{h_D})^{1-\beta}}.
$$
Set $w_{\epsilon}=tv_{\epsilon}-\iota\log|\sigma_E|^2_{h_E}$. Assume that $w_{\epsilon}$ attains minimum at $y_0$, one has
$$
  \frac{\psi_{\epsilon}}{t}+\frac{1}{t}(w_{\epsilon}+\iota\log|\sigma_E|^2_{h_E})\geq \log\frac{(\epsilon^{2}+|s_D|^{2}_{h_D})^{1-\beta}(\widetilde{\omega_{t, E, \epsilon}})^n}{\Omega}\geq -C.
$$
Therefore
$$
  w_{\epsilon}\geq -C-\iota\log|\sigma_E|^2_{h_E}\geq -C.
$$
For the upper bound of $w_{\epsilon}$, one needs to consider the following equation. For $t\in[T-\bar{t}, T)$
$$
(\widetilde{\omega_{t}}+\sqrt{-1}\partial\overline{\partial}\psi_{\epsilon}+t\sqrt{-1}\partial\overline{\partial}v_{\epsilon})^n
=e^{\frac{\psi_{\epsilon}}{t}+v_{\epsilon}}\frac{\Omega}{(\epsilon^{2}+|s_D|^{2}_{h_D})^{1-\beta}}.
$$
Although $\widetilde{\omega_{t}}+\sqrt{-1}\partial\overline{\partial}\psi_{\epsilon}$ may not be a K\"{a}hler metric near $T$, it can be controlled from above,
we still make use of maximum principle to get
$$\sup v_{\epsilon}\leq C.$$
Combining above consequences, one obtains $$-C\leq w_{\epsilon}\leq C-\iota\log|\sigma_E|^2_{h_E}.$$  \\
Set $ \omega_{t, E, \epsilon}=\widetilde{\omega_{t, E, \epsilon}}+\sqrt{-1}\partial\overline{\partial}w_{\epsilon}$ and $t_0=T-\bar{t}$.
\begin{claim}
For $t\in[t_0, T]$, there exist two constants $C$ and $\alpha$ which are independent of $t$ and $\epsilon$ such that
$$
 C^{-1}|\sigma_E|^{2\alpha(n-1)+\frac{2\iota}{t}}_{h_E}\widetilde{\omega_{t_0, E, \epsilon}}\leq \omega_{t, E, \epsilon}
 \leq \frac{C}{|\sigma_E|^{2\alpha}_{h_E}}\widetilde{\omega_{t_0, E, \epsilon}}.
$$
\end{claim}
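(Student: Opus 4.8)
The plan is to derive the two-sided bound for $\omega_{t,E,\epsilon}$ by a Schwarz-lemma/Aubin--Yau second-order estimate, controlling the trace $\tr_{\widetilde{\omega_{t_0,E,\epsilon}}}\omega_{t,E,\epsilon}$ from above and the trace $\tr_{\omega_{t,E,\epsilon}}\widetilde{\omega_{t_0,E,\epsilon}}$ from above, and then combining the two through the Monge--Amp\`ere equation. The weight $|\sigma_E|^{2\alpha}_{h_E}$ is forced on us because the reference metric $\widetilde{\omega_{t_0,E,\epsilon}}$ degenerates along $E$, so the standard trick is to apply the maximum principle not to $\log\tr_{\widetilde{\omega}}\omega$ directly but to the perturbed quantity
$$
 H=\log\tr_{\widetilde{\omega_{t_0,E,\epsilon}}}\omega_{t,E,\epsilon}-A w_\epsilon+\alpha\log|\sigma_E|^2_{h_E}
$$
for suitable large constants $A,\alpha$, so that the bad curvature terms of $\widetilde{\omega_{t_0,E,\epsilon}}$ near $E$ are absorbed and $H$ cannot blow up at $E$ (where it tends to $-\infty$).

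First I would record the elliptic (time-$t$) equation satisfied by $\omega_{t,E,\epsilon}$, namely a complex Monge--Amp\`ere equation of the form $\omega_{t,E,\epsilon}^n=e^{F_{t,\epsilon}}(\widetilde{\omega_{t_0,E,\epsilon}})^n$ with $F_{t,\epsilon}$ uniformly bounded (this uses the $C^0$-bounds $-C\le w_\epsilon\le C-\iota\log|\sigma_E|^2_{h_E}$ just established, together with the uniform bound on $\psi_\epsilon=\delta\chi(\epsilon^2+|s_D|^2_{h_D})$ coming from Guenancia--P\u{a}un's $\chi_\beta$, and the fact that $\widetilde{\omega_{t,E,\epsilon}}$ and $\widetilde{\omega_{t_0,E,\epsilon}}$ are uniformly equivalent on compact subsets of $M\setminus E$ for $t\in[t_0,T]$). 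Then I would apply the Aubin--Yau inequality: there is a constant $B$ depending only on a lower bound for the bisectional curvature of the fixed metric $\widetilde{\omega_{t_0,E,\epsilon}}$ on $M\setminus E$ (which, crucially, has the controlled shape $\ge -B|\sigma_E|^{-2\alpha_0}_{h_E}$ for some $\alpha_0$, since $\widetilde{\omega_{t_0,E,\epsilon}}$ is comparable to a conical-type metric with cone angle along $E$) such that
$$
 \Delta_{\omega_{t,E,\epsilon}}\log\tr_{\widetilde{\omega_{t_0,E,\epsilon}}}\omega_{t,E,\epsilon}
 \ge -B\,\tr_{\omega_{t,E,\epsilon}}\widetilde{\omega_{t_0,E,\epsilon}}-\frac{\Delta_{\omega_{t,E,\epsilon}}F_{t,\epsilon}}{\tr_{\widetilde{\omega_{t_0,E,\epsilon}}}\omega_{t,E,\epsilon}}.
$$
Feeding in $\Delta_{\omega_{t,E,\epsilon}}w_\epsilon=n-\tr_{\omega_{t,E,\epsilon}}\widetilde{\omega_{t_0,E,\epsilon}}$ (up to the harmless smooth terms coming from $\widetilde{\omega_{t_0,E,\epsilon}}$ versus $\widetilde{\omega_{t,E,\epsilon}}$) and $\Delta_{\omega_{t,E,\epsilon}}\log|\sigma_E|^2_{h_E}=-\tr_{\omega_{t,E,\epsilon}}\Theta_{h_E}$, one computes $\Delta_{\omega_{t,E,\epsilon}}H\ge (A-B-C\alpha)\tr_{\omega_{t,E,\epsilon}}\widetilde{\omega_{t_0,E,\epsilon}}-C(A+1)$; choosing $A$ large gives a genuine coercive term.

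At an interior maximum $p$ of $H$ (which exists in $M\setminus E$ because $H\to-\infty$ as $x\to E$ and, by $w_\epsilon$-bounds, $H$ is bounded above away from $E$), we get $\tr_{\omega_{t,E,\epsilon}}\widetilde{\omega_{t_0,E,\epsilon}}(p)\le C$; combined with the Monge--Amp\`ere equation (arithmetic-geometric mean on the ratio of eigenvalues) this yields $\tr_{\widetilde{\omega_{t_0,E,\epsilon}}}\omega_{t,E,\epsilon}(p)\le C$ as well, hence $H(p)\le C+\alpha\log|\sigma_E|^2_{h_E}(p)\le C'$ since $|\sigma_E|_{h_E}$ is bounded. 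Unwinding $H\le C'$ everywhere and using $w_\epsilon\le C-\iota\log|\sigma_E|^2_{h_E}$ gives the upper bound
$$
 \tr_{\widetilde{\omega_{t_0,E,\epsilon}}}\omega_{t,E,\epsilon}\le \frac{C}{|\sigma_E|^{2\alpha+2\iota/t}_{h_E}},
$$
which after possibly enlarging $\alpha$ (absorbing the $2\iota/t$, bounded for $t\in[t_0,T]$) is the right-hand inequality of the claim; the left-hand inequality then follows by the standard reciprocal argument, writing $\tr_{\omega_{t,E,\epsilon}}\widetilde{\omega_{t_0,E,\epsilon}}\le \tfrac{1}{(n-1)!}(\tr_{\widetilde{\omega_{t_0,E,\epsilon}}}\omega_{t,E,\epsilon})^{n-1}\cdot \tfrac{(\widetilde{\omega_{t_0,E,\epsilon}})^n}{\omega_{t,E,\epsilon}^n}$ and using the Monge--Amp\`ere equation to convert the last factor into $e^{-F_{t,\epsilon}}$, which is bounded.

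\medskip
\textbf{Main obstacle.} The delicate point is the exact power $\alpha(n-1)+\iota/t$ appearing on the left-hand side: it is not a cosmetic constant but records precisely how the curvature lower bound of the degenerate reference metric $\widetilde{\omega_{t_0,E,\epsilon}}$ interacts, through the $(n-1)$-st power in the reciprocal trace inequality, with the logarithmic growth of $w_\epsilon=tv_\epsilon-\iota\log|\sigma_E|^2_{h_E}$ near $E$. Getting these constants to be genuinely independent of both $t$ and $\epsilon$ is the crux: one must check that the constant $B$ in Aubin--Yau, the constant in the bound $\widetilde{\omega_{t,E,\epsilon}}\le C\widetilde{\omega_{t_0,E,\epsilon}}$, and the $C^0$-constant for $F_{t,\epsilon}$ can all be taken uniform over $t\in[t_0,T]$ and over $\epsilon\in(0,1]$ --- this is where the uniform bound $0\le \chi_\beta\le C$ on a bounded interval, quoted from Guenancia--P\u{a}un, and the $\epsilon$-independence of the cone-angle structure of $\widetilde{\omega_{t_0,E,\epsilon}}$ along $E$ (but \emph{not} along $D$, where the degeneration is controlled by $\epsilon$ and disappears in the limit) do the essential work.
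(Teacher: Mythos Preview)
Your overall architecture (Aubin--Yau/Schwarz inequality plus maximum principle on a perturbed trace quantity, then the reciprocal trace inequality for the lower bound) matches the paper.  The genuine gap is that you have misidentified \emph{where} the bisectional curvature of the reference metric $\widetilde{\omega_{t_0,E,\epsilon}}$ fails to be uniformly bounded, and this leads you to the wrong auxiliary term in $H$ and to miss the key technical input.

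The metric $\widetilde{\omega_{t_0,E,\epsilon}}=\widetilde{\omega_{t_0}}+\iota\sqrt{-1}\partial\bar\partial\log|\sigma_E|^2_{h_E}+\sqrt{-1}\partial\bar\partial\psi_\epsilon$ is, away from $E$, simply $\widetilde{\omega_{t_0}}-\iota\Theta_{h_E}+\sqrt{-1}\partial\bar\partial\psi_\epsilon$; this extends to a \emph{smooth} K\"ahler metric on all of $M$ for each fixed $\epsilon>0$.  There is no cone angle along $E$, and no curvature blow-up there.  The curvature degeneration as $\epsilon\to 0$ happens along $D$, coming from the $\psi_\epsilon=\delta\chi_\beta(\epsilon^2+|s_D|^2_{h_D})$ term.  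So your proposed bound $R_{i\bar i k\bar k}\ge -B|\sigma_E|^{-2\alpha_0}_{h_E}$ is not the right shape, and adding $\alpha\log|\sigma_E|^2_{h_E}$ to $H$ does nothing to control the problem.  With your $H$ the constant $B$ in the Aubin--Yau inequality would depend on $\epsilon$, which is exactly what the claim forbids.

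What the paper actually uses is the Guenancia--P\u{a}un curvature bound $R_{i\bar i k\bar k}(\widetilde{\omega_{t_0,E,\epsilon}})\ge -(C+(\Psi_{\epsilon,\rho})_{i\bar i})$ with $\Psi_{\epsilon,\rho}=C\chi_\rho(\epsilon^2+|s_D|^2_{h_D})$, together with a matching bound $\sqrt{-1}\partial\bar\partial F_{t_0,D,\epsilon}\ge -(C\widetilde{\omega_{t_0,E,\epsilon}}+\sqrt{-1}\partial\bar\partial\Psi_{\epsilon,\rho})$.  The correct test function is
\[
H=\log\bigl(\tr_{\widetilde{\omega_{t_0,E,\epsilon}}}\omega_{t,E,\epsilon}+2\Psi_{\epsilon,\rho}\bigr)-A\,w_\epsilon,
\]
with the $\Psi_{\epsilon,\rho}$ correction (a function of $|s_D|$, not of $|\sigma_E|$) absorbing the unbounded curvature terms near $D$ uniformly in $\epsilon$.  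The $|\sigma_E|^{-2\alpha}_{h_E}$ weight in the final upper bound does \emph{not} come from curvature at all: it enters only at the last step, when you unwind $H\le C$ and use $w_\epsilon\le C-\iota\log|\sigma_E|^2_{h_E}$.  The exponent $\alpha(n-1)+\iota/t$ on the left then drops out mechanically from the reciprocal trace inequality combined with the factor $e^{-\frac{1}{t}(w_\epsilon+\iota\log|\sigma_E|^2_{h_E})}$ in the Monge--Amp\`ere ratio, using the lower bound $w_\epsilon\ge -C$.
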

\begin{proof}
Set $F_{t_0, D, \epsilon}=\log \frac{\Omega}{(\epsilon^{2}+|s_D|^{2}_{h_D})^{1-\beta}(\widetilde{\omega_{t_0, E, \epsilon}})^n}$. By Yau's Schwarz lemma \cite{Y}, one deduces
$$
\triangle_{\omega_{t,E,\epsilon}}\log tr_{\widetilde{\omega_{t_0, E, \epsilon}}}\omega_{t,E,\epsilon}\geq \frac{1}{tr_{\widetilde{\omega_{t_0, E, \epsilon}}}
\omega_{t,E,\epsilon}}(-g^{i\bar{j}}(\widetilde{\omega_{t_0, E, \epsilon}})R_{i\bar{j}}(\omega_{t,E,\epsilon})+g^{i\bar{j}}(\omega_{t,E,\epsilon})
g_{k\bar{l}}(\omega_{t,E,\epsilon})R_{i\bar{j}}^{ \ \ k\bar{l}}(\widetilde{\omega_{t_0, E, \epsilon}})).
$$
Now we take an holomorphic orthonormal coordinates at a point $(t,p)$ such that $g_{i\bar{j}}(\widetilde{\omega_{t_0, E, \epsilon}})=\delta_{ij}$, and $g_{i\bar{j}}(\omega_{t,E,\epsilon})=\lambda_i\delta_{ij}$. So we have
$$
-g^{i\bar{j}}(\widetilde{\omega_{t_0, E, \epsilon}})R_{i\bar{j}}(\omega_{t,E,\epsilon})=\triangle_{\widetilde{\omega_{t_0, E, \epsilon}}}(\frac{\psi_{\epsilon}}{t}+\frac{1}{t}(w_{\epsilon}+\iota\cdot \log|\sigma_E|^2_{h_E})+F_{t_0,D,\epsilon})-\sum_{i,k}R_{i\bar{i}k\bar{k}}(\widetilde{\omega_{t_0, E, \epsilon}}),
$$
and
$$
g^{i\bar{j}}(\omega_{t,E,\epsilon})
g_{k\bar{l}}(\omega_{t,E,\epsilon})R_{i\bar{j}}^{ \ \ k\bar{l}}(\widetilde{\omega_{t_0, E, \epsilon}})=\sum_{i,k}\frac{\lambda_k}{\lambda_i}R_{i\bar{i}k\bar{k}}(\widetilde{\omega_{t_0, E, \epsilon}}).
$$
Thus we have
\begin{multline}
\triangle_{\omega_{t,E,\epsilon}}\log tr_{\widetilde{\omega_{t_0, E, \epsilon}}}\omega_{t,E,\epsilon}\geq   \\
\frac{1}{\sum_{m}\lambda_m}\{\sum_{i<k}
(\frac{\lambda_i}{\lambda_k}+\frac{\lambda_k}{\lambda_i}-2)R_{i\bar{i}k\bar{k}}(\widetilde{\omega_{t_0, E, \epsilon}})+\triangle_{\widetilde{\omega_{t_0, E, \epsilon}}}(\frac{\psi_{\epsilon}}{t}+\frac{1}{t}(w_{\epsilon}+\iota\cdot \log|\sigma_E|^2_{h_E})+F_{t_0,D,\epsilon})\}
\end{multline}
The following result is contained in \cite{GP}. We denote $\Psi_{\epsilon,\rho}=C\chi_{\rho}(\epsilon^2+|s_D|^2_{h_D})$ and there exist constants $C$ and $0<\rho<1$ such that
$$
R_{i\bar{i}k\bar{k}}(\widetilde{\omega_{t_0, E, \epsilon}})\geq -(C+(\Psi_{\epsilon,\rho})_{i\bar{i}}).
$$
Using the symmetry of the curvature tensor, we also have
$$
R_{i\bar{i}k\bar{k}}(\widetilde{\omega_{t_0, E, \epsilon}})\geq -(C+(\Psi_{\epsilon,\rho})_{k\bar{k}}).
$$
Notice that
$$
\frac{1}{\sum_{m}\lambda_m}\sum_{i<k}
(\frac{\lambda_i}{\lambda_k}+\frac{\lambda_k}{\lambda_i}-2)R_{i\bar{i}k\bar{k}}(\widetilde{\omega_{t_0, E, \epsilon}})\geq -\frac{1}{\sum_{m}\lambda_m}
\sum_{i<k}\{\frac{\lambda_k}{\lambda_i}(C+(\Psi_{\epsilon,\rho})_{i\bar{i}})+\frac{\lambda_i}{\lambda_k}(C+(\Psi_{\epsilon,\rho})_{k\bar{k}})\}
$$
and
$$
\triangle_{\omega_{t,E,\epsilon}}(\Psi_{\epsilon,\rho})= \sum_{i}\frac{(\Psi_{\epsilon,\rho})_{i\bar{i}}}{\lambda_i} \geq
\frac{1}{\sum_{m}\lambda_m}
\sum_{i<k}\{\frac{\lambda_k}{\lambda_i}(C+(\Psi_{\epsilon,\rho})_{i\bar{i}})+\frac{\lambda_i}{\lambda_k}(C+(\Psi_{\epsilon,\rho})_{k\bar{k}})\}-C
tr_{\omega_{t,E,\epsilon}}\widetilde{\omega_{t_0,E,\epsilon}}.
$$
Therefore one gets
\begin{multline}
\triangle_{\omega_{t,E,\epsilon}}\log (tr_{\widetilde{\omega_{t_0, E, \epsilon}}}\omega_{t,E,\epsilon}+\Psi_{\epsilon,\rho})\geq  \\
\frac{1}{tr_{\widetilde{\omega_{t_0,E,\epsilon}}}\omega_{t,E,\epsilon}}\triangle_{\widetilde{\omega_{t_0, E, \epsilon}}}(\frac{\psi_{\epsilon}}{t}+\frac{1}{t}(w_{\epsilon}+\iota\cdot \log|\sigma_E|^2_{h_E})+F_{t_0,D,\epsilon})-C
tr_{\omega_{t,E,\epsilon}}\widetilde{\omega_{t_0,E,\epsilon}}.
\end{multline}
From \cite{GP} one knows
$$
\sqrt{-1}\partial\bar{\partial}F_{t_0.D,\epsilon}\geq -(C\widetilde{\omega_{t_0,E,\epsilon}}+\sqrt{-1}\partial\bar{\partial}\Psi_{\epsilon,\rho}); \
|F_{t_0,D,\epsilon}|_{C^0}\leq C.
$$
By taking the trace with respect to $\widetilde{\omega_{t_0,E,\epsilon}}$, we get
$$
\triangle_{\widetilde{\omega_{t_0,E,\epsilon}}}F_{t_0.D,\epsilon}\geq -nC-\triangle_{\widetilde{\omega_{t_0,E,\epsilon}}}\Psi_{\epsilon,\rho}.
$$
Taking a simple calculation, one has
\begin{align*}
\triangle_{\widetilde{\omega_{t_0,E,\epsilon}}}\Psi_{\epsilon,\rho}=\sum_{i}\frac{(\Psi_{\epsilon,\rho})_{i\bar{i}}}{\lambda_i}& \geq
\frac{\triangle_{\widetilde{\omega_{t_0,E,\epsilon}}}(\Psi_{\epsilon,\rho})+nC}{tr_{\widetilde{\omega_{t_0,E,\epsilon}}}\omega_{t,E,\epsilon}}
-Ctr_{\omega_{t,E,\epsilon}}\widetilde{\omega_{t_0,E,\epsilon}}  \\
&\geq -\frac{\triangle_{\widetilde{\omega_{t_0,E,\epsilon}}}F_{t_0.D,\epsilon}}{tr_{\widetilde{\omega_{t_0,E,\epsilon}}}\omega_{t,E,\epsilon}}
-Ctr_{\omega_{t,E,\epsilon}}\widetilde{\omega_{t_0,E,\epsilon}}.
\end{align*}
Note that
$$
\triangle_{\widetilde{\omega_{t_0, E, \epsilon}}}(\frac{\psi_{\epsilon}}{t}+\frac{1}{t}(w_{\epsilon}+\iota\cdot \log|\sigma_E|^2_{h_E})+F_{t_0,D,\epsilon})
=\frac{1}{t}tr_{\widetilde{\omega_{t_0,E,\epsilon}}}(\omega_{t,E,\epsilon}-\widetilde{\omega_t})\geq -\frac{1}{t}tr_{\widetilde{\omega_{t_0,E,\epsilon}}}\widetilde{\omega_t}\geq -C,
$$
the last inequality bases on $\widetilde{\omega_{t_0,E,\epsilon}}\geq C^{-1}\widetilde{\omega_t}$.
There is an easy fact that is
$$
(tr_{\omega_{t,E,\epsilon}}\widetilde{\omega_{t_0,E,\epsilon}})(tr_{\widetilde{\omega_{t_0,E,\epsilon}}}\omega_{t,E,\epsilon})\geq n.
$$
Notice that there exists a constant $C'$ such that $\widetilde{\omega_{t,E,\epsilon}}\geq C'\widetilde{\omega_{t_0,E,\epsilon}}$.

We denote
$H=\log(tr_{\widetilde{\omega_{t_0,E,\epsilon}}}\omega_{t,E,\epsilon}+2\Psi_{\epsilon,\rho})-\frac{(1+C)}{C'}w_{\epsilon}$, then by calculation one has
\begin{align*}
\triangle_{\omega_{t,E,\epsilon}}H & \geq -Ctr_{\omega_{t,E,\epsilon}}\widetilde{\omega_{t_0,E,\epsilon}}-\frac{(1+C)}{C'}tr_{\omega_{t,E,\epsilon}}
(\omega_{t,E,\epsilon}-\widetilde{\omega_{t,E,\epsilon}})  \\
& \geq tr_{\omega_{t,E,\epsilon}}\widetilde{\omega_{t_0,E,\epsilon}}-n(1+C).
\end{align*}
Assume $H$ attains maximum at $x_0$, one deduces
$$
tr_{\omega_{t,E,\epsilon}}\widetilde{\omega_{t_0,E,\epsilon}}(x_0)\leq C.
$$
Notice that
$$
tr_{\widetilde{\omega_{t_0,E,\epsilon}}}\omega_{t,E,\epsilon}(x_0)\leq (tr_{\omega_{t,E,\epsilon}}\widetilde{\omega_{t_0,E,\epsilon}}(x_0))^{n-1}\cdot
e^{\frac{\psi_{\epsilon}}{t}+\frac{1}{t}(w_{\epsilon}+\iota\cdot \log|\sigma_E|^2_{h_E})+F_{t_0,D,\epsilon}}(x_0)\leq C.
$$
Therefore according to the estimate of $w_{\epsilon}$ and the boundness of $\Psi_{\epsilon,\rho}$, there exist constants $C$ and $\alpha$ such that
\begin{align*}
\log tr_{\widetilde{\omega_{t_0,E,\epsilon}}}\omega_{t,E,\epsilon}&\leq \log tr_{\widetilde{\omega_{t_0,E,\epsilon}}}\omega_{t,E,\epsilon}(x_0)-\frac{(1+C)}{C'}
w_{\epsilon}(x_0)+\frac{(1+C)}{C'} w_{\epsilon}+C  \\
& \leq C-\alpha\log|\sigma_E|^2_{h_E}.
\end{align*}
Furthermore one gets
$$
tr_{\widetilde{\omega_{t_0,E,\epsilon}}}\omega_{t,E,\epsilon}\leq \frac{C}{|\sigma_E|^{2\alpha}_{h_E}}.
$$
By the similar argument one has
$$
tr_{\omega_{t,E,\epsilon}}\widetilde{\omega_{t_0,E,\epsilon}}\leq (tr_{\widetilde{\omega_{t_0,E,\epsilon}}}\omega_{t,E,\epsilon})^{n-1}\cdot
e^{-(\frac{\psi_{\epsilon}}{t}+\frac{1}{t}(w_{\epsilon}+\iota\cdot \log|\sigma_E|^2_{h_E})+F_{t_0,D,\epsilon})}\leq
\frac{C}{|\sigma_E|_{h_E}^{2\alpha(n-1)+\frac{2\iota}{t}}}.
$$
\end{proof}
By the above Claim, one knows that for any compact subset $K \subset M\backslash (D\cup E)$, there exists a constant $C_{K}>0$ independent of $\epsilon$ and $t$ such that $C_{K}^{-1}\omega_{0}\leq \omega_{t,E,\epsilon}\leq C_{K} \omega_{0}$, i.e. $|\triangle_{\omega_0}w_{\epsilon}|\leq C$. By theorem 17.14 in \cite{GT}, we have that $|w_{\epsilon}|_{C^{2, \alpha}}\leq C^{'}_{K}$ on $K\times [T-\bar{t},T]$. Furthermore, by the standard bootstrapping argument one has that for any $l>0$, $|w_{\epsilon}|_{C^{l, \alpha}}\leq C_{K,l}$ on $K\times [T-\bar{t},T]$. By the
standard diagonal argument and passing to a subsequence, we see that $w_{\epsilon_i,t_i}$ $C^{\infty}$ converges to a $(1,1)$ form on each compact subset $K \subset M\backslash (D\cup E)$ when $\epsilon_i\rightarrow 0$ and $t_i\rightarrow T$. Back to equation(2.1), we know that there exists a subsequence $\{t_i\}_{i=1}^{\infty}$ such that $u_{t_i}$ $C^{\infty}$
converges to $u_T$ on each compact subset $K \subset M\backslash (D\cup E)$ when $t_i\rightarrow T$. A priori, this limit may not be unique. So we still need to prove that $u_T$ is unique, i.e., independent of the subsequence $\{t_i\}_{i=1}^{\infty}$. \\

Differentiating $t$ at both sides of equation (2.1), one has
$$
 \triangle_{\omega_t}\dot{u_t}=\frac{1}{t}\dot{u_t}-\frac{n}{t^2}+\frac{1}{t^2}tr_{\omega_t}\omega_0.
$$
By the simple calculation, one gets
$$
  \triangle_{\omega_t}(u_t-n\log t)^{'}\geq \frac{1}{t}(u_t-n\log t)^{'}.
$$
By maximum principle one knows
$$
\frac{1}{t}(u_t-n\log t)^{'}\leq 0.
$$
i.e. $(u_t-n\log t)$ is decreasing as $t\rightarrow T$. Combining the previous argument, we see that $u_T$ is unique.
Therefore Theorem(1.4) is proved.
\end{proof}
\subsection{Smooth approximation of metric with conical singularities}
In this subsection, we assume $L_D$ is a semi-positive line bundle, i.e. there exists a Hermitian metric $h_D$ such that the curvature $\Theta_{h_D}\geq 0$. Fix $t\in[T-\bar{t}, T)$, we consider the approximation equation
\begin{equation}
(\omega_{t,\epsilon})^n=e^{\psi_{\epsilon}+v_{\epsilon}}\frac{\Omega}{(\epsilon^2+|s_D|^2_{h_D})^{1-\beta}},
\end{equation}
 where
$\omega_{t,\epsilon}=\widetilde{\omega_t}+t\sqrt{-1}\partial\overline{\partial}\psi_{\epsilon}
+t\sqrt{-1}\partial\overline{\partial}v_{\epsilon}$. By the calculation, one has
\begin{align*}
Ric(\omega_{t,\epsilon})&=-\frac{1}{t}(\omega_{t,\epsilon}-\widetilde{\omega_t})+Ric(\Omega)+(1-\beta)
\sqrt{-1}\partial\overline{\partial}\log(\epsilon^2+|s_D|^2_{h_D})  \\
&= -\frac{1}{t}\omega_{t,\epsilon}+\frac{1}{t}\omega_0+(1-\beta)\frac{\epsilon^2\langle\nabla s,\overline{\nabla s}\rangle}{(\epsilon^2+|s_D|^2_{h_D})^2}+(1-\beta)\frac{\epsilon^2}{\epsilon^2+|s_D|^2_{h_D}}\Theta_{h_D} \\
& \geq -\frac{1}{t}\omega_{t,\epsilon}.
\end{align*}
For fixed $t\in[T-\bar{t}, T)$, by Claim(2.3) we know
$$
  C_t^{-1}\omega_0\leq A_t^{-1}\widetilde{\omega_{t,\epsilon}}\leq \omega_{t,\epsilon}\leq A_t\widetilde{\omega_{t,\epsilon}}
  \leq \frac{C_t\omega_0}{|s_D|^{2(1-\beta)}_{h_D}}.
$$
Therefore $$diam(\omega_{t,\epsilon})\leq C_t. $$
\begin{prop}
$(M,\omega_{t})$ is the Gromov-Hausdorff limit of $(M,\omega_{t,\epsilon})$ as $\epsilon\rightarrow 0$.
\end{prop}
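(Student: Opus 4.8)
The plan is to prove the Gromov--Hausdorff convergence by establishing three things: (i) a uniform diameter bound for $(M,\omega_{t,\epsilon})$ independent of $\epsilon$; (ii) smooth convergence $\omega_{t,\epsilon}\to\omega_t$ on compact subsets of $M\setminus D$; and (iii) a quantitative control on how much $\omega_{t,\epsilon}$-distance can be accumulated in a small tubular neighbourhood of $D$, so that distances on the approximating manifolds converge to distances on $(M,\omega_t)$. Item (i) is already recorded in the paragraph preceding the proposition (from Claim 2.3, $\mathrm{diam}(\omega_{t,\epsilon})\le C_t$). For (ii), I would run the same Schwarz-lemma and bootstrap argument as in the proof of Claim 2.3, but now with $t$ fixed and interior to $[T-\bar t,T)$, so the $E$-twist is unnecessary: one obtains $C_K^{-1}\omega_0\le\omega_{t,\epsilon}\le C_K\omega_0$ on each compact $K\subset M\setminus D$, hence $C^{l,\alpha}$ estimates on $v_\epsilon$, hence (via a diagonal subsequence) $C^\infty$ convergence $v_\epsilon\to v$ and $\omega_{t,\epsilon}\to\omega_t$ on $M\setminus D$; uniqueness of the limit follows from the uniqueness statement in Proposition 2.2.

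The core of the argument is then the metric-space comparison. Write $d_\epsilon$ for the distance of $\omega_{t,\epsilon}$ and $d$ for the (completed) distance of $\omega_t$ on $M$. The identity map $M\to M$ is the candidate $\epsilon$-approximation; I must show that for every $\eta>0$ there is $\epsilon_0$ with $|d_\epsilon(x,y)-d(x,y)|<\eta$ for all $x,y$ once $\epsilon<\epsilon_0$. For the upper bound $d_\epsilon\le d+\eta$: given a near-minimizing $\omega_t$-geodesic from $x$ to $y$, perturb it off $D$ (using that $D$ has real codimension $2$, a generic small perturbation meets $D$ not at all and changes $\omega_t$-length by less than $\eta/2$), then on this fixed compact path apply the $C^0$ convergence $\omega_{t,\epsilon}\to\omega_t$ from (ii) to get its $\omega_{t,\epsilon}$-length within $\eta/2$ of its $\omega_t$-length. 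For the lower bound $d_\epsilon\ge d-\eta$: here one needs that paths cannot ``cheat'' by diving into the region where $\omega_{t,\epsilon}$ is small near $D$. The upper bound $\omega_{t,\epsilon}\le C_t|s_D|^{-2(1-\beta)}_{h_D}\omega_0$ is the wrong direction; instead I would use the lower bound $\omega_{t,\epsilon}\ge A_t^{-1}\widetilde\omega_{t,\epsilon}$ together with an explicit estimate of the $\widetilde\omega_{t,\epsilon}$-width of the tube $\{|s_D|^2_{h_D}<\rho\}$: because $\psi_\epsilon=\delta\chi_\beta(\epsilon^2+|s_D|^2_{h_D})$ is built from the Guenancia--P\'aun function $\chi_\beta$, which is \emph{bounded} and whose model metric has finite transverse diameter of order $\rho^\beta$, the $\omega_{t,\epsilon}$-distance across such a tube is $O(\rho^\beta)$, uniformly in $\epsilon$. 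Thus any path from $x$ to $y$ spends all but an $O(\rho^\beta)$-length portion in $M\setminus\{|s_D|^2<\rho\}$, a compact set on which $\omega_{t,\epsilon}\ge(1-o(1))\omega_t$; truncating the part inside the tube and closing up loses only $O(\rho^\beta)$ in length, giving $d_\epsilon(x,y)\ge d(x,y)-C\rho^\beta-o(1)$. Choosing $\rho$ small then $\epsilon$ small yields the claim, and combined with the diameter bound this is exactly Gromov--Hausdorff convergence.

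The main obstacle is the lower bound near $D$: one must show rigorously that the approximating metrics do not develop a ``thin neck'' along $D$ of positive length, i.e. that the transverse $\omega_{t,\epsilon}$-size of $\{|s_D|^2_{h_D}<\rho\}$ tends to $0$ with $\rho$ uniformly in $\epsilon$. This hinges on the precise structure of the regularized conical potential $\psi_\epsilon$ and the uniform bound $0\le\chi_\beta\le C$ quoted after Proposition 2.1; making the $O(\rho^\beta)$ transverse-width estimate uniform in $\epsilon$ requires comparing $\omega_{t,\epsilon}$ with the standard model conical metric $\beta^2|z|^{2\beta-2}|dz|^2$ in the normal direction and checking the $\epsilon$-regularization only shrinks the neck. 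Everything else — the diameter bound, the interior smooth convergence, and the codimension-$2$ perturbation argument for the upper bound — is routine given Claim 2.3 and Proposition 2.2.
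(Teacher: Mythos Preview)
The paper does not give a proof of this proposition at all: it simply states that ``the proof of the above proposition is the same as the proposition~(2.5) of \cite{CDS1}, so we omit it.'' Your outline is therefore not a comparison against an argument in the paper but rather a reconstruction of the Chen--Donaldson--Sun argument, and as such it is along the right lines: the three ingredients you list (uniform diameter bound, smooth convergence on compacta of $M\setminus D$, uniform transverse-width control of tubes around $D$) are exactly the inputs CDS use.

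One point in your lower-bound argument is stated imprecisely. You write that ``any path from $x$ to $y$ spends all but an $O(\rho^\beta)$-length portion in $M\setminus\{|s_D|^2<\rho\}$''; taken literally this is false, since a $\omega_{t,\epsilon}$-geodesic could run tangentially along $D$ inside the tube for a long time. What the uniform bound $\omega_{t,\epsilon}\ge A_t^{-1}\widetilde{\omega_{t,\epsilon}}$ actually gives you is that the $\omega_{t,\epsilon}$-length of any arc inside the tube is bounded below by a fixed multiple of its length in the reference cone-model metric, and the latter is uniformly quasi-isometric (in $\epsilon$) to the genuine conical metric $\omega_t$. Hence the $\omega_t$-length of the same arc is controlled by its $\omega_{t,\epsilon}$-length up to a multiplicative constant, not merely by $O(\rho^\beta)$; combining this with the $C^0$ convergence outside the tube yields $d_\epsilon\ge C^{-1}d$ immediately and, with a bit more care (letting $\rho\to 0$ after $\epsilon\to 0$ on the exterior), the sharp inequality $d_\epsilon\ge d-\eta$. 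The ``close up by short arcs'' picture you sketch works cleanly only when the geodesic crosses the tube transversally a bounded number of times, which you have not arranged. This is the one place where your write-up would need tightening to match the CDS argument; everything else is fine.
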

The proof of the above proposition is the same as the proposition(2.5) of \cite{CDS1}, so we omit it.
\section{A prior estimate to the conical continuity equation}
In this section, we present some estimate to the conical continuity equation (1.1). First, we assume $\beta\in \mathbb{Q}$ and $L_D$ is a semi-positive line bundle. The rationality theorem of Kawamata \cite{Ka} says that $T\in \mathbb{Q}$. Take a positive integer $l_0$ such that $Tl_0\in \mathbb{Z}, Tl_0(1-\beta)\in \mathbb{Z}$ and define the limit line bundle $L=l_0(L^{'}+TK_{M}+T(1-\beta)L_D)$.

Since the limit class $L^{'}+TK_{M}+T(1-\beta)L_D$ is nef and big, according to the base point free theorem \cite{KoMo},
we may assume $l_0$ is chosen such that $L$ has no base points. A basis of $H^0(M,L)$ gives a holomorphic map
$$
  \Phi:M\longrightarrow \mathbb{C}P^N
$$
where $N=dimH^0(M,L)-1$. Let $M_{reg}$ be the set of regular points of $\Phi$. Denote by $\omega_{FS}$ the Fubini-Study metric of $\mathbb{C}P^N$ and $\eta_{T}=\frac{1}{l_0}\Phi^{*}\omega_{FS}$.

Let $\omega_{t,\epsilon}$, $t\in [T-\bar{t},T)$, be a solution to (2.7). By putting $\eta_t=\frac{T-t}{T}\omega_0+\frac{t}{T}\eta_T$, a family of background metrics, the solution $\omega_{t,\epsilon}$ can
be written as
$$
 \omega_{t,\epsilon}=\eta_t+\sqrt{-1}\partial\overline{\partial}u_{t,\epsilon}.
$$
Since $\frac{1}{T}(\omega_0-\eta_{T})\in c_1(M)-(1-\beta)c_1(L_D)$, there is a smooth volume form $\Omega$ on $M$ and curvature $\Theta_{h_D}$ on $L_D$ such that$$Ric(\Omega)-(1-\beta)\Theta_{h_D}=\frac{1}{T}(\omega_0-\eta_{T}).$$

Now we consider the following equation
\begin{equation}
(\eta_t+\sqrt{-1}\partial\overline{\partial}u_{t,\epsilon})^n=e^{\frac{u_{t,\epsilon}}{t}}\frac{\Omega}{(\epsilon^2+|s_D|^2_{h_D})^{1-\beta}}
\end{equation}
\begin{lem}
There is a constant $C$ independent of $t$ and $\epsilon$ such that
$$  |u_{t,\epsilon}|_{C^0}\leq C.$$
\end{lem}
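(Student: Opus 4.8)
The plan is to obtain the bound in two halves: an upper bound by the maximum principle, and a lower bound by combining a uniform oscillation estimate of Kolodziej type with an integration of the Monge--Amp\`ere equation (3.1). For the upper bound, at a point $p$ where $u_{t,\epsilon}$ attains its maximum on $M$ one has $\sqrt{-1}\partial\overline{\partial}u_{t,\epsilon}(p)\le 0$, so $\omega_{t,\epsilon}^n(p)\le\eta_t^n(p)$, and (3.1) gives $e^{u_{t,\epsilon}(p)/t}\le\frac{(\epsilon^2+|s_D|^2_{h_D})^{1-\beta}\eta_t^n}{\Omega}(p)$. Since $|s_D|^2_{h_D}$ is bounded on $M$, $\eta_t\le C\omega_0$ for $t\in[T-\bar{t},T]$ (the classes $[\eta_t]=\tfrac{T-t}{T}[\omega_0]+\tfrac{t}{T}[\eta_T]$ form a bounded family and $\eta_T\ge 0$), and $\Omega\ge c\,\omega_0^n$, the right-hand side is bounded above by a constant independent of $t$ and $\epsilon$; since $t\le T$ this gives $u_{t,\epsilon}\le C_1$ uniformly.

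For the lower bound I would first extract a uniform oscillation estimate. Rewrite (3.1) as $\omega_{t,\epsilon}^n=f_{t,\epsilon}\,\omega_0^n$ with $f_{t,\epsilon}=\frac{e^{u_{t,\epsilon}/t}\,\Omega}{(\epsilon^2+|s_D|^2_{h_D})^{1-\beta}\,\omega_0^n}$. The upper bound gives $e^{u_{t,\epsilon}/t}\le e^{C_1/(T-\bar{t})}$, hence $0\le f_{t,\epsilon}\le C\,|s_D|_{h_D}^{-2(1-\beta)}$. Because $\beta\in(0,1)$ one may fix $p\in(1,\tfrac{1}{1-\beta})$; then $2p(1-\beta)<2$, so $\int_M|s_D|_{h_D}^{-2p(1-\beta)}\,\omega_0^n<\infty$ and $\|f_{t,\epsilon}\|_{L^p(\omega_0^n)}\le A$ with $A$ independent of $t$ and $\epsilon$. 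Moreover $[\eta_t]^n$ depends continuously on $t$, is positive for $t<T$ (a K\"ahler class plus a nef class) and positive at $t=T$ by the noncollapsing hypothesis $[\eta_T]^n>0$, so $[\eta_t]^n\ge c_0>0$ on $[T-\bar{t},T]$, and the classes $[\eta_t]$ lie in a fixed compact subset of the big cone. Applying the $L^\infty$ estimate for complex Monge--Amp\`ere equations in semi-positive and big classes (Kolodziej; Eyssidieux--Guedj--Zeriahi) to $u_{t,\epsilon}-\sup_M u_{t,\epsilon}$, which is $\eta_t$-plurisubharmonic, has supremum $0$, and still solves $(\eta_t+\sqrt{-1}\partial\overline{\partial}u_{t,\epsilon})^n=f_{t,\epsilon}\,\omega_0^n$, then yields $\sup_M u_{t,\epsilon}-\inf_M u_{t,\epsilon}\le C_2$ with $C_2$ independent of $t$ and $\epsilon$.

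To pin down the level, I would integrate (3.1) over $M$ (legitimate since for $\epsilon>0$ the solution is smooth, so $\omega_{t,\epsilon}$ is cohomologous to $\eta_t$):
\[
c_0\le[\eta_t]^n=\int_M\omega_{t,\epsilon}^n=\int_M e^{u_{t,\epsilon}/t}\,\frac{\Omega}{(\epsilon^2+|s_D|^2_{h_D})^{1-\beta}}\le e^{(\sup_M u_{t,\epsilon})/t}\int_M\frac{\Omega}{|s_D|_{h_D}^{2(1-\beta)}},
\]
and the last integral is a finite constant independent of $\epsilon$ because $1-\beta<1$; since $t\le T$ this forces $\sup_M u_{t,\epsilon}\ge -C_3$. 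Combined with the oscillation bound, $\inf_M u_{t,\epsilon}\ge\sup_M u_{t,\epsilon}-C_2\ge -C_2-C_3$, and together with the upper bound this proves $|u_{t,\epsilon}|_{C^0}\le C$.

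The hard part is the uniform oscillation estimate in the second step: Kolodziej's $L^\infty$ bound has to be uniform \emph{simultaneously} as the reference class $[\eta_t]$ degenerates to the merely nef and big class $[\eta_T]$ and as the density of (3.1) develops its singularity along $D$ when $\epsilon\to0$. This is precisely where the standing hypotheses enter: the uniform $L^p$-integrability of the density for some $p>1$ is forced by $\beta>0$, and the uniform positivity $[\eta_t]^n\ge c_0$ of the Monge--Amp\`ere mass is forced by the noncollapsing assumption. One could instead try to bootstrap from the comparison in Claim (2.3), but its constants depend on $t$ and blow up as $t\to T$, so that route alone does not give a uniform bound.
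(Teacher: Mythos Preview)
Your proof is correct and follows essentially the same route as the paper: the upper bound via the maximum principle, and the lower bound via the $L^\infty$ estimate for degenerate complex Monge--Amp\`ere equations from pluripotential theory. The paper simply cites Zhang \cite{Zh} for the second step, while you invoke Kolodziej and Eyssidieux--Guedj--Zeriahi and spell out the uniformity ingredients (the $L^p$ control of the density coming from $\beta>0$, and the volume lower bound $[\eta_t]^n\ge c_0$ from the noncollapsing assumption); these are the same circle of ideas, and your explicit integration step to bound $\sup_M u_{t,\epsilon}$ from below is a standard normalization detail that the paper's citation absorbs.
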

\begin{proof}
The uniform upper bound of $u_{t,\epsilon}$ is trivial consequence of the maximum principle. The $L^{\infty}$ bound follows from the capacity calculation of \cite{Zh} for exactly our case when $u_{t,\epsilon}$ has a uniform upper bound.
\end{proof}
\begin{cor}
There exists $C$ independent of $t$ and $\epsilon$ such that
$$C^{-1}\Omega\leq \omega_{t,\epsilon}^{n} \leq \frac{C\Omega}{|s_D|_{h_D}^{2(1-\beta)}}.                       $$
\end{cor}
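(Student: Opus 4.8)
The statement is an immediate consequence of Lemma 3.3 together with the explicit form of equation (3.2), so the proof will be short. First I would rewrite (3.2) as
$$
\omega_{t,\epsilon}^n = e^{u_{t,\epsilon}/t}\,\frac{\Omega}{(\epsilon^2+|s_D|^2_{h_D})^{1-\beta}},
$$
an identity that holds pointwise on $M\setminus D$ and extends as an identity of measures on $M$. Since $t$ ranges over the compact interval $[T-\bar t,T]$ with $T-\bar t>0$, the factor $1/t$ is bounded; combining this with the bound $|u_{t,\epsilon}|_{C^0}\le C$ from Lemma 3.3 (which is uniform in $t$ and $\epsilon$), one obtains a constant $C_1>0$, independent of $t$ and $\epsilon$, with $C_1^{-1}\le e^{u_{t,\epsilon}/t}\le C_1$.

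For the upper bound, I would use that $\beta\in(0,1)$ and $\epsilon>0$ imply $(\epsilon^2+|s_D|^2_{h_D})^{1-\beta}\ge |s_D|^{2(1-\beta)}_{h_D}$, whence
$$
\omega_{t,\epsilon}^n \le C_1\,\frac{\Omega}{|s_D|^{2(1-\beta)}_{h_D}}.
$$
For the lower bound, note that $|s_D|^2_{h_D}$ is bounded on the compact manifold $M$ and that we may assume $\epsilon\le 1$, so $\epsilon^2+|s_D|^2_{h_D}\le C_2$ for a constant $C_2$ independent of $t$ and $\epsilon$; therefore
$$
\omega_{t,\epsilon}^n \ge C_1^{-1}C_2^{-(1-\beta)}\,\Omega.
$$
Taking $C=\max\{C_1,\,C_1C_2^{1-\beta}\}$ yields the asserted two-sided bound.

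There is essentially no obstacle in this argument; it is a bookkeeping corollary of the previous lemma. The only point requiring a modicum of care is that every constant must be taken uniform in both $t$ and $\epsilon$, which is exactly what the uniformity in Lemma 3.3, together with $t$ staying bounded away from $0$ on $[T-\bar t,T]$, provides.
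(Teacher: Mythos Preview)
Your proof is correct and is exactly the intended argument: the paper states this as a corollary without proof, and it follows immediately from the $C^0$ bound on $u_{t,\epsilon}$ together with the Monge--Amp\`ere equation, just as you wrote. Note only that in the paper's numbering the equation is (3.1) and the lemma is (3.2), not (3.2) and (3.3).
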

\begin{lem}
There exists $C$ independent of $t$ and $\epsilon$ such that
$$\dot{u_{t,\epsilon}}\leq C; \ddot{u_{t,\epsilon}}\leq C.$$
\end{lem}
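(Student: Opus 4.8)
The plan is to differentiate the approximate equation (3.2) in $t$ and run the maximum principle twice, reducing each bound to the zeroth-order estimate of Lemma~3.3 and the volume-form estimate of Corollary~3.5. First I would write $(3.2)$ as $\log\det(\eta_t+\sqrt{-1}\partial\overline\partial u_{t,\epsilon}) = \frac{u_{t,\epsilon}}{t} + \log\frac{\Omega}{(\epsilon^2+|s_D|^2_{h_D})^{1-\beta}} + \log\det(\text{ref})$, and differentiate once in $t$. Using $\dot\eta_t = \frac1T(\eta_T-\omega_0)$ and the identity $\partial_t\log\det = \mathrm{tr}_{\omega_{t,\epsilon}}\partial_t\omega_{t,\epsilon}$, one obtains a linear parabolic-type equation for $\dot u_{t,\epsilon}$ of the shape
\[
\triangle_{\omega_{t,\epsilon}}\dot u_{t,\epsilon} \;=\; \frac1t\,\dot u_{t,\epsilon} \;-\;\frac{u_{t,\epsilon}}{t^2} \;-\; \mathrm{tr}_{\omega_{t,\epsilon}}\dot\eta_t .
\]
Since $\dot\eta_t=\frac1T(\eta_T-\omega_0)$ and $\eta_T\ge 0$, $\omega_0>0$ we have $-\mathrm{tr}_{\omega_{t,\epsilon}}\dot\eta_t \le \frac1T\mathrm{tr}_{\omega_{t,\epsilon}}\omega_0$, which is not yet sign-definite; the trick, exactly as in \cite{NTZ} and as already used in \S2.2 for the quantity $(u_t-n\log t)'$, is to pass to $\phi := t\dot u_{t,\epsilon} - u_{t,\epsilon} + (\text{affine function of }t)$, engineered so that the bad trace term is absorbed. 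Concretely, because $\omega_{t,\epsilon}=\eta_t+\sqrt{-1}\partial\overline\partial u_{t,\epsilon}$ and $\eta_t$ is affine in $t$, the combination $t\dot u_{t,\epsilon}-u_{t,\epsilon}$ has $\sqrt{-1}\partial\overline\partial(t\dot u_{t,\epsilon}-u_{t,\epsilon}) = t\,\partial_t\omega_{t,\epsilon} - \omega_{t,\epsilon} + \eta_t - t\dot\eta_t$, so taking $\mathrm{tr}_{\omega_{t,\epsilon}}$ converts $\triangle_{\omega_{t,\epsilon}}(t\dot u_{t,\epsilon}-u_{t,\epsilon})$ into $n$ minus a positive trace of $\eta_0=\omega_0$ plus controlled terms; at an interior maximum the positive trace term has the right sign and one reads off $t\dot u_{t,\epsilon}-u_{t,\epsilon}\le C$, hence $\dot u_{t,\epsilon}\le C$ by Lemma~3.3.

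For $\ddot u_{t,\epsilon}$ I would differentiate the equation for $\dot u_{t,\epsilon}$ once more in $t$. This produces
\[
\triangle_{\omega_{t,\epsilon}}\ddot u_{t,\epsilon} \;=\; \frac1t\ddot u_{t,\epsilon} + (\text{lower order in }\dot u,u) + \big|\partial_t\omega_{t,\epsilon}\big|^2_{\omega_{t,\epsilon}} - 2\,\mathrm{tr}_{\omega_{t,\epsilon}}\big(\dot\eta_t\big)\!\cdots,
\]
where the key new term $\big|\partial_t\omega_{t,\epsilon}\big|^2_{\omega_{t,\epsilon}}\ge 0$ has a favorable sign for an \emph{upper} bound only if it appears on the correct side; one must track the derivative of $\triangle_{\omega_{t,\epsilon}}$ itself, which contributes $-g^{i\bar j}g^{k\bar l}(\partial_t g_{i\bar l})(\partial_t g_{k\bar j})\,\dot u_{t,\epsilon\,,\,?}$-type terms. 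As in the Kähler–Ricci flow and in \cite{NTZ}, the correct auxiliary function is again of the form $t\ddot u_{t,\epsilon} + a\dot u_{t,\epsilon} + b u_{t,\epsilon} + c$ with constants chosen so that the Cauchy–Schwarz-bad quadratic term is dominated; at an interior maximum of this quantity the quadratic term and a multiple of $\mathrm{tr}_{\omega_{t,\epsilon}}\omega_0$ combine to force an upper bound, and then Lemma~3.3 together with the just-proved bound on $\dot u_{t,\epsilon}$ gives $\ddot u_{t,\epsilon}\le C$. One should also note the degeneracy of $\Omega/(\epsilon^2+|s_D|^2_{h_D})^{1-\beta}$ near $D$ is harmless here: this factor is independent of $t$, so it simply drops out upon $t$-differentiation, and the estimate is genuinely uniform in $\epsilon$.

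The main obstacle is the second-order step: organizing the $t$-derivative of the Laplacian and of $\log\det$ so that the inevitable negative quadratic term $-|\partial_t\omega_{t,\epsilon}|^2$ (coming from differentiating $g^{i\bar j}$) is compensated rather than fought against, and choosing the coefficients $a,b$ in the test function to make the maximum-principle inequality close. A secondary technical point is that the maximum of each test function must be attained at an interior point of $M\setminus D$ (the conical locus), which one justifies exactly as in Proposition~2.2 by adding a small barrier $\frac1k|s_D|^{2p}_{h_D}$ with $2p<\beta$ and letting $k\to\infty$; since all the differentiated quantities are smooth away from $D$ this causes no difficulty. I expect no new phenomenon beyond \cite{NTZ} — the conical weight is inert under $\partial_t$ — so the proof is a careful but routine adaptation.
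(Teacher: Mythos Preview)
Your plan for the first bound is exactly the paper's: the test function $t\dot u_{t,\epsilon}-u_{t,\epsilon}$ satisfies
\[
\triangle_{\omega_{t,\epsilon}}(t\dot u_{t,\epsilon}-u_{t,\epsilon})=\tfrac{1}{t}(t\dot u_{t,\epsilon}-u_{t,\epsilon})-n+\mathrm{tr}_{\omega_{t,\epsilon}}\omega_0,
\]
and the maximum principle together with the $C^0$ estimate give $\dot u_{t,\epsilon}\le C$.

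For the second bound you have overcomplicated things by misreading the sign of the quadratic term. Differentiating $g^{i\bar j}$ contributes $-|\omega'_{t,\epsilon}|^2$ to $\partial_t\big(\mathrm{tr}_{\omega_{t,\epsilon}}\omega'_{t,\epsilon}\big)$, so (using $\ddot\eta_t=0$) the second-differentiated equation reads
\[
\triangle_{\omega_{t,\epsilon}}\ddot u_{t,\epsilon}=\tfrac{1}{t}\ddot u_{t,\epsilon}+|\omega'_{t,\epsilon}|^2+\text{(terms bounded by }\dot u_{t,\epsilon},u_{t,\epsilon}\text{)},
\]
and the quadratic term is \emph{favorable} for an upper bound on $\ddot u_{t,\epsilon}$; no auxiliary combination $t\ddot u+a\dot u+bu+c$ with tuned coefficients is needed. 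The paper makes this transparent by differentiating the scalar identity $t\dot u_{t,\epsilon}-u_{t,\epsilon}=t^{2}\,\mathrm{tr}_{\omega_{t,\epsilon}}\omega'_{t,\epsilon}$ instead, obtaining
\[
t\ddot u_{t,\epsilon}=t^{2}\triangle_{\omega_{t,\epsilon}}\ddot u_{t,\epsilon}-\big|\omega_{t,\epsilon}-t\omega'_{t,\epsilon}\big|^{2}+n,
\]
with the quadratic completed into a manifest square; the maximum principle applied directly to $\ddot u_{t,\epsilon}$ then gives $\ddot u_{t,\epsilon}\le n/t\le C$ in one line.

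Your barrier argument to force the maximum off $D$ is also unnecessary here: for $\epsilon>0$ the metric $\omega_{t,\epsilon}$ and the potential $u_{t,\epsilon}$ are smooth on all of $M$, so the ordinary maximum principle applies globally.
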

\begin{proof}
Differentiating $t$ at both sides of (3.1), one gets
$$
tr_{\omega_{t,\epsilon}}\omega_{t,\epsilon}^{'}=\frac{1}{t^2}(t\dot{u_{t,\epsilon}}-u_{t,\epsilon})
$$
where
$$
\omega_{t,\epsilon}^{'}=\frac{1}{T}(\eta_T-\omega_0)+\sqrt{-1}\partial\overline{\partial}\dot{u_{t,\epsilon}}=
\frac{1}{t}(\omega_{t,\epsilon}-\omega_0-\sqrt{-1}\partial\overline{\partial}u_{t,\epsilon})
+\sqrt{-1}\partial\overline{\partial}\dot{u_{t,\epsilon}}.
$$
By the simple calculation one has
$$
\triangle_{\omega_{t,\epsilon}}(t\dot{u_{t,\epsilon}}-u_{t,\epsilon})=\frac{1}{t}(t\dot{u_{t,\epsilon}}-u_{t,\epsilon})-n+tr_{\omega_{t,\epsilon}}\omega_0.
$$
Applying the maximum principle one derives
$$
  t\dot{u_{t,\epsilon}}-u_{t,\epsilon}\leq C.
$$
Combining with the $C^0$ bound of $u_{t,\epsilon}$ we also have
$$
 \dot{u_{t,\epsilon}}\leq C.
$$
To get the upper bound of $\ddot{u_{t,\epsilon}}$ we first observe that
$$
 t\dot{u_{t,\epsilon}}-u_{t,\epsilon}=t^2tr_{\omega_{t,\epsilon}}\omega_{t,\epsilon}^{'}.
$$
Differentiating $t$ at both sides of the above formula one gets
$$
t\ddot{u_{t,\epsilon}}=2t \cdot tr_{\omega_{t,\epsilon}}\omega_{t,\epsilon}^{'}+t^2\triangle_{\omega_{t,\epsilon}}\ddot{u_{t,\epsilon}}
-t^2|\omega_{t,\epsilon}^{'}|^2=t^2\triangle_{\omega_{t,\epsilon}}\ddot{u_{t,\epsilon}}
-|\omega_{t,\epsilon}-t\omega_{t,\epsilon}^{'}|^2+n.
$$
Then by the maximum principle one derives
$$
\ddot{u_{t,\epsilon}}\leq C.
$$
\end{proof}
By theorem(1.2), one knows that $u_{t,\epsilon}$ $C^{\infty}$ converges to $u_t$ on each compact subset $K\subset M\setminus D$ when $\epsilon\rightarrow 0$. Furthermore $u_t$ solves the following equation in the current sense
$$
(\eta_t+\sqrt{-1}\partial\overline{\partial}u_{t})^n=e^{\frac{u_{t}}{t}}\frac{\Omega}{|s_D|^{2(1-\beta)}_{h_D}}.
$$
\begin{lem}
The function $u_t$ converges uniformly to a bounded function $u_T$ satisfying
$$
(\eta_T+\sqrt{-1}\partial\overline{\partial}u_{T})^n=e^{\frac{u_{T}}{T}}\frac{\Omega}{|s_D|^{2(1-\beta)}_{h_D}}
$$
in the current sense.
\end{lem}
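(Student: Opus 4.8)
The plan is to obtain $u_T$ as a monotone-modulo-linear limit of the $u_t$, to recognize it as a bounded $\eta_T$-plurisubharmonic solution of the limit Monge--Amp\`ere equation by pluripotential theory, and only then to upgrade the convergence to a uniform one via Dini's theorem. First I would use the preceding lemma: the estimate $\dot{u}_{t,\epsilon}\leq C$, uniform in $t$ and $\epsilon$, shows that $u_{t,\epsilon}-Ct$ is non-increasing in $t$; letting $\epsilon\to 0$ and using the smooth convergence $u_{t,\epsilon}\to u_t$ on compact subsets of $M\setminus D$, the function $u_t-Ct$ is non-increasing in $t$ on $M\setminus D$. Combined with the uniform bound $|u_t|_{C^0}\leq C$, the family $u_t-Ct$ decreases to a bounded limit as $t\uparrow T$; set $u_T:=\lim_{t\uparrow T}u_t$, so that $|u_T|_{C^0}\leq C$ and $u_t-Ct\searrow u_T-CT$ pointwise on $M\setminus D$.

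Next I would check that $u_T$ is a bounded $\eta_T$-plurisubharmonic function solving the limit equation in the sense of currents. Since $\sqrt{-1}\partial\overline{\partial}(u_t-Ct)=\sqrt{-1}\partial\overline{\partial}u_t$ and, because $\eta_T\geq 0$ (it is the pull-back of a Fubini--Study form), one has $\eta_t\leq\eta_T+\delta\omega_0$ for every $t$ close enough to $T$, with $\delta$ a fixed small constant, each such $u_t$ is $(\eta_T+\delta\omega_0)$-psh; the decreasing limit of uniformly bounded $(\eta_T+\delta\omega_0)$-psh functions is again of this type, and letting $\delta\to 0$ shows $u_T$ is $\eta_T$-psh. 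Writing $\eta_t=(\eta_T+\delta\omega_0)-\gamma_t$ with $\gamma_t$ a smooth form tending in $C^\infty$ to $\delta\omega_0$, the multilinearity of the complex Monge--Amp\`ere operator together with the Bedford--Taylor convergence theorem for wedge powers along uniformly bounded decreasing sequences gives $(\eta_t+\sqrt{-1}\partial\overline{\partial}u_t)^n\to(\eta_T+\sqrt{-1}\partial\overline{\partial}u_T)^n$ weakly as $t\uparrow T$. Since also $e^{u_t/t}\to e^{u_T/T}$ pointwise and boundedly while $\Omega/|s_D|^{2(1-\beta)}_{h_D}$ is a fixed locally integrable volume form, passing to the limit in (3.1) gives
$$(\eta_T+\sqrt{-1}\partial\overline{\partial}u_T)^n=e^{u_T/T}\frac{\Omega}{|s_D|^{2(1-\beta)}_{h_D}}$$
in the current sense.

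Finally I would upgrade to uniform convergence. Because $\beta\in(0,1)$, the density $|s_D|^{-2(1-\beta)}_{h_D}$ belongs to $L^p(M,\Omega)$ for some $p>1$ (any $p<\frac{1}{1-\beta}$ works), and $e^{u_T/T}$ is bounded; hence the right-hand side above is an $L^p$ volume form with $p>1$, and by Ko{\l}odziej's $L^p$ estimate (see also Eyssidieux--Guedj--Zeriahi) the potential $u_T$ is continuous on $M$. The same argument, now using the boundedness of $e^{u_t/t}$, shows that each $u_t$ is continuous. Then $\{u_t-Ct\}_{t<T}$ is a monotone non-increasing family of continuous functions on the compact manifold $M$ converging pointwise to the continuous function $u_T-CT$, so Dini's theorem yields uniform convergence $u_t-Ct\to u_T-CT$, i.e.\ $u_t\to u_T$ uniformly on $M$. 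I expect the main obstacle to be exactly this last step: the whole uniform convergence rests on the continuity of the monotone limit $u_T$, which is available only because the conical density is $L^p$-integrable for some $p>1$ --- precisely where the hypothesis $\beta<1$ enters --- and because Ko{\l}odziej-type pluripotential estimates are at our disposal; a more technical, secondary point is the handling of the \emph{varying} reference forms $\eta_t$ when the Monge--Amp\`ere equation is passed to the limit.
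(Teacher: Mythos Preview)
Your argument is correct and runs along the same lines as the paper's: establish a monotonicity in $t$, combine with the uniform $C^0$ bound to get a limit, and conclude. The paper obtains monotonicity of the slightly different quantity $\frac{u_t}{t}-n\log t$ by a fresh maximum-principle calculation for $\omega_t$ (showing $t\triangle_{\omega_t}(\frac{u_t}{t}-n\log t)'\geq(\frac{u_t}{t}-n\log t)'$), rather than citing the already available estimate $\dot{u}_{t,\epsilon}\leq C$ from the preceding lemma as you do; your route is shorter and equally valid. Conversely, the paper is terse where you are careful: it simply asserts that the monotone bounded family converges uniformly and that the limit equation holds in the current sense, without invoking Dini, Ko{\l}odziej/EGZ continuity, or Bedford--Taylor convergence. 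Your proof can thus be read as a rigorous fleshing-out of the paper's sketch, with the monotone quantity traded for an equivalent one already in hand; the pluripotential machinery you bring in (continuity of $u_T$ from the $L^p$ density with $p<\frac{1}{1-\beta}$, stability of the Monge--Amp\`ere operator under decreasing limits with varying reference forms) is exactly what is needed to justify the two steps the paper leaves implicit.
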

\begin{proof}
For $u_t$ we observe that
$$
t\triangle_{\omega_t}(\frac{u_t}{t})^{'}=(\frac{u_t}{t})'+\frac{1}{t}(tr_{\omega_t}\omega_0-n)\geq (\frac{u_t}{t})'-\frac{n}{t}.
$$
Then one deduces
$$
 t\triangle_{\omega_t}(\frac{u_t}{t}-n\log t)'\geq (\frac{u_t}{t}-n\log t)^{'}.
$$
By the maximum principle one knows that $\frac{u_t}{t}-n\log t$ is monotone decreasing. Consequently, $u_t$ converges uniformly to a unique limit $u_T$. It is obvious that $u_T$ is smooth outside $M\backslash (\mathcal{S}_{M}\cup D)$.
\end{proof}
\begin{prop}
There exists $C$ independent of $t$ and $\epsilon$ such that
$$
  \eta_T\leq C\omega_{t,\epsilon}, \forall t\in [T-\bar{t}, T).
$$
\end{prop}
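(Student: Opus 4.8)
The plan is to estimate $\eta_T$ from below via the Schwarz (Chern--Lu) inequality applied to the holomorphic map $\Phi:M\to\mathbb{CP}^N$, exploiting that $\eta_T=\frac{1}{l_0}\Phi^{*}\omega_{FS}$. Since $\eta_T$ is semi-positive, its eigenvalues relative to $\omega_{t,\epsilon}$ are nonnegative, so a uniform bound $tr_{\omega_{t,\epsilon}}\eta_T\leq C$ is equivalent to the desired $\eta_T\leq C\omega_{t,\epsilon}$. Thus it suffices to bound the smooth nonnegative function $tr_{\omega_{t,\epsilon}}\eta_T$ uniformly in $t\in[T-\bar t,T)$ and $\epsilon$.

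First I would record a uniform lower Ricci bound. Differentiating $\log$ in (3.1) and inserting $Ric(\Omega)-(1-\beta)\Theta_{h_D}=\frac1T(\omega_0-\eta_T)$ together with $\eta_t=\frac{T-t}{T}\omega_0+\frac tT\eta_T$, the same computation as in Section 2.3 gives $Ric(\omega_{t,\epsilon})=-\frac1t\omega_{t,\epsilon}+\frac1t\omega_0+(1-\beta)(\text{nonnegative terms})\geq-\frac1t\omega_{t,\epsilon}\geq-\frac{1}{T-\bar t}\omega_{t,\epsilon}$, where nonnegativity of the extra terms uses $\Theta_{h_D}\geq0$ (semi-positivity of $L_D$) and $\omega_0>0$. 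Then, on the open set $\{tr_{\omega_{t,\epsilon}}\eta_T>0\}$ (which contains $M_{reg}$), Yau's Schwarz lemma for $\Phi:(M,\omega_{t,\epsilon})\to(\mathbb{CP}^N,\omega_{FS})$, using the uniform Ricci lower bound and the bounded holomorphic bisectional curvature of the Fubini--Study metric, yields
$$
\triangle_{\omega_{t,\epsilon}}\log tr_{\omega_{t,\epsilon}}\eta_T\geq -C_1-C_2\,tr_{\omega_{t,\epsilon}}\eta_T,
$$
with $C_1,C_2$ independent of $t$ and $\epsilon$.

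Next comes the maximum principle. From (3.1), $\triangle_{\omega_{t,\epsilon}}u_{t,\epsilon}=n-tr_{\omega_{t,\epsilon}}\eta_t$, and $\eta_t\geq\frac tT\eta_T\geq c_0\eta_T$ with $c_0=\frac{T-\bar t}{T}>0$, so $\triangle_{\omega_{t,\epsilon}}u_{t,\epsilon}\leq n-c_0\,tr_{\omega_{t,\epsilon}}\eta_T$. Consider $H=\log tr_{\omega_{t,\epsilon}}\eta_T-A u_{t,\epsilon}$ with $A=(C_2+1)/c_0$; then $\triangle_{\omega_{t,\epsilon}}H\geq -C_1-An+tr_{\omega_{t,\epsilon}}\eta_T$. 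Since $H\to-\infty$ as one approaches the locus where $tr_{\omega_{t,\epsilon}}\eta_T=0$, $H$ attains its maximum at some $x_0$ with $tr_{\omega_{t,\epsilon}}\eta_T(x_0)>0$, and there $\triangle_{\omega_{t,\epsilon}}H(x_0)\leq0$ forces $tr_{\omega_{t,\epsilon}}\eta_T(x_0)\leq C_1+An$. Combining this with the uniform bound $|u_{t,\epsilon}|_{C^0}\leq C$ from Lemma 3.2 gives $H\leq H(x_0)\leq\log(C_1+An)+AC$ everywhere, whence $\log tr_{\omega_{t,\epsilon}}\eta_T=H+Au_{t,\epsilon}\leq\log(C_1+An)+2AC$, i.e. $tr_{\omega_{t,\epsilon}}\eta_T\leq C$ uniformly. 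This yields $\eta_T\leq C\omega_{t,\epsilon}$.

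\textbf{Main obstacle.} The only genuine subtlety is the degeneracy of $\eta_T$ along $M\setminus M_{reg}$: it is merely semi-positive, not a Kähler metric, so both the Chern--Lu computation and the maximum principle have to be run on the open locus $\{tr_{\omega_{t,\epsilon}}\eta_T>0\}$; the blow-down $H\to-\infty$ toward the complementary set is what pins the maximum inside this locus and allows the argument to close. One must also be careful that the Ricci lower bound—hence $C_1,C_2$—is truly independent of $t$ and $\epsilon$, and it is precisely here that the hypothesis that $L_D$ is semi-positive is used.
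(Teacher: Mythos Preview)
Your proposal is correct and follows essentially the same approach as the paper: apply Yau's Schwarz/Chern--Lu inequality using the uniform Ricci lower bound $Ric(\omega_{t,\epsilon})\geq -\frac1t\omega_{t,\epsilon}$ and the bisectional curvature bound of $\omega_{FS}$, combine with $\triangle_{\omega_{t,\epsilon}}u_{t,\epsilon}=n-tr_{\omega_{t,\epsilon}}\eta_t\leq n-c_0\,tr_{\omega_{t,\epsilon}}\eta_T$, and run the maximum principle on $H=\log tr_{\omega_{t,\epsilon}}\eta_T-Au_{t,\epsilon}$ together with the $C^0$ bound on $u_{t,\epsilon}$. If anything, you are more careful than the paper in explicitly handling the degeneracy locus $\{tr_{\omega_{t,\epsilon}}\eta_T=0\}$, which the paper's proof leaves implicit.
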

\begin{proof}
By Yau's Schwarz lemma \cite{Y} and $Ric(\omega_{t,\epsilon})\geq -\frac{1}{t}\omega_{t,\epsilon}$,
$$
 \triangle_{\omega_{t,\epsilon}}\log tr_{\omega_{t,\epsilon}}\eta_{T}\geq -\frac{n}{t}-ntr_{\omega_{t,\epsilon}\eta_T}.
$$
On the other hand, $\eta_t\geq \delta \eta_T$ for some $\delta>0$ independent of $t$, so
$$
\triangle_{\omega_{t,\epsilon}}u_{t,\epsilon}=n-tr_{\omega_{t,\epsilon}}\eta_{t}\leq n-\delta tr_{\omega_{t,\epsilon}}\eta_{T}.
$$
Hence
$$
\triangle_{\omega_{t,\epsilon}}(\log tr_{\omega_{t,\epsilon}}\eta_{T}-\frac{2n}{\delta}u_{t,\epsilon})\geq
n tr_{\omega_{t,\epsilon}}\eta_{T}-\frac{C(n,T)}{\delta}.
$$
Let $H=\log tr_{\omega_{t,\epsilon}}\eta_{T}-\frac{2n}{\delta}u_{t,\epsilon}$. Assume $H$ achieves maximum at $x_0$, then
$$
  tr_{\omega_{t,\epsilon}}\eta_{T}(x_0)\leq C.
$$
By the boundness of $u_{t,\epsilon}$, one has
$$
  tr_{\omega_{t,\epsilon}}\eta_{T}\leq C.
$$
\end{proof}
\begin{cor}
The limit metric $\omega_{T}$ is smooth on $M_{reg}\backslash D$.
\end{cor}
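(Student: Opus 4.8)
The plan is to localise on $M_{reg}\setminus D$, upgrade Proposition 3.6 to a two-sided uniform equivalence of metrics there, and then run the interior elliptic regularity theory for the approximation equation (3.1). Fix a compact set $K\subset M_{reg}\setminus D$. Since $L$ has no base points, the map $\Phi$ is defined on all of $M$, so $\eta_T=\frac{1}{l_0}\Phi^{*}\omega_{FS}$ is a smooth semi-positive $(1,1)$-form on $M$ which is positive definite precisely on $M_{reg}$, where $\Phi$ is an immersion. Hence on $K$ there is a constant with $c\,\omega_0\le\eta_T$ and $|s_D|^{2}_{h_D}\ge c$. Proposition 3.6 then gives $\omega_{t,\epsilon}\ge C^{-1}\eta_T\ge c'\omega_0$ on $K$, uniformly in $t\in[T-\bar{t},T)$ and $\epsilon$. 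For the reverse inequality I would use Corollary 3.3: on $K$ one has $\omega_{t,\epsilon}^{n}\le C\Omega/|s_D|^{2(1-\beta)}_{h_D}\le C'\,\omega_0^{n}$, and combining this bound on the determinant with the lower bound just obtained forces every eigenvalue of $\omega_{t,\epsilon}$ relative to $\omega_0$ to lie between two positive constants, whence
\[
  c_K\,\omega_0\le\omega_{t,\epsilon}\le C_K\,\omega_0\qquad\text{on }K,
\]
with $c_K,C_K$ independent of $t$ and $\epsilon$.

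With this two-sided bound, (3.1) is a uniformly elliptic, concave fully nonlinear equation on $K$ whose right-hand side $e^{u_{t,\epsilon}/t}\Omega/(\epsilon^{2}+|s_D|^{2}_{h_D})^{1-\beta}$ has uniformly bounded $C^{0}$ norm by Lemma 3.2 and, the denominator being bounded below on $K$, is smooth with all its derivatives controlled uniformly in $\epsilon$. The interior $C^{2,\alpha}$ estimate for such equations (the same estimate invoked in Section 2) then gives a uniform $C^{2,\alpha}$ bound for $u_{t,\epsilon}$ on compact subsets of $M_{reg}\setminus D$, and the usual Schauder bootstrapping yields uniform $C^{l,\alpha}$ bounds for every $l$, all constants independent of $t$ and $\epsilon$.

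It remains to pass to the limit. For fixed $t$ we let $\epsilon\to0$: since $u_{t,\epsilon}\to u_t$ in $C^{\infty}_{loc}(M\setminus D)$, the $\epsilon$- and $t$-uniform $C^{l,\alpha}$ bounds descend to $u_t$, so $\{u_t\}_{t\in[T-\bar{t},T)}$ is bounded in $C^{l,\alpha}$ on every compact subset of $M_{reg}\setminus D$, for every $l$, and hence precompact there in $C^{\infty}$. Since $u_t\to u_T$ uniformly and $u_T$ is the \emph{unique} limit (Lemma 3.5), every $C^{\infty}$ subsequential limit of $u_t$ coincides with $u_T$, so $u_t\to u_T$ in $C^{\infty}_{loc}(M_{reg}\setminus D)$ and in particular $u_T\in C^{\infty}(M_{reg}\setminus D)$. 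As $\eta_T$ is smooth on $M_{reg}$, the form $\omega_T=\eta_T+\sqrt{-1}\partial\overline{\partial}u_T$ is smooth on $M_{reg}\setminus D$, and passing to the limit in $c_K\,\omega_0\le\omega_{t,\epsilon}$ shows it is moreover positive there. The one point demanding genuine care is the two-sided metric bound on $K$ — specifically, extracting the \emph{upper} bound on $\omega_{t,\epsilon}$ from the volume bound of Corollary 3.3 together with the lower bound of Proposition 3.6; everything afterwards is the standard regularity-and-compactness package already used in Section 2.
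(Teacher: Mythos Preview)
Your proof is correct and follows essentially the same route as the paper: combine the Schwarz-lemma lower bound $\eta_T\le C\omega_{t,\epsilon}$ (Proposition~3.6) with the volume upper bound of Corollary~3.3 to get a uniform two-sided metric equivalence on each compact $K\subset M_{reg}\setminus D$, and then bootstrap. The only cosmetic difference is that the paper works directly with the limit $u_T$ against the reference metric $\eta_T$ (writing $C^{-1}\eta_T\le\omega_T\le C_K\eta_T$ and $n+\Delta_{\eta_T}u_T\le C_K$), whereas you carry the estimates on the smooth approximations $u_{t,\epsilon}$ against $\omega_0$ and pass to the limit at the end; your version is arguably cleaner since the Evans--Krylov/Schauder step is applied to genuinely smooth solutions.
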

\begin{proof}
$\eta_T$ is smooth on any compact subset $K\subset M_{reg}\backslash D$, so by Lemma(3.5) and proposition(3.7) one knows
$$
C^{-1}\eta_T\leq \omega_T\leq C_K\eta_T.
$$
In particular, $n+\triangle_{\eta_T}u_T\leq C_K$ on $K$. Then applying a bootstrap argument we get the higher derivative bound $|u_T|_{C^{l}(K)}\leq C_{l,K}$.
\end{proof}

Now we define $w_{t,\epsilon}=(T-t)\dot{u_{t,\epsilon}}+u_{t,\epsilon}$ which satisfies
\begin{equation}
 \triangle_{\omega_{t,\epsilon}}w_{t,\epsilon}=\frac{1}{t}w_{t,\epsilon}-\frac{T}{t^2}u_{t,\epsilon}+n
 -tr_{\omega_{t,\epsilon}}\eta_T.
\end{equation}
This can be seen by combining
$$
\triangle_{\omega_{t,\epsilon}}\dot{u_{t,\epsilon}}=\frac{1}{t^2}(t\dot{u_{t,\epsilon}}-u_{t,\epsilon})
+\frac{1}{T}tr_{\omega_{t,\epsilon}}(\omega_0-\eta_T)
$$
and
$$
\triangle_{\omega_{t,\epsilon}}u_{t,\epsilon}=n-\frac{T-t}{T}tr_{\omega_{t,\epsilon}}\omega_0-\frac{t}{T}tr_{\omega_{t,\epsilon}}\eta_T.
$$
For (3.8) by maximum principle one gets $$w_{t,\epsilon}\geq -c$$. Therefore $$|w_{t,\epsilon}|_{C^0}\leq C,
|\triangle_{\omega_{t,\epsilon}}w_{t,\epsilon}|_{C^0}\leq C.$$
Combining with the $C^0$ bound of $u_{t,\epsilon}$ we also have
$$
-\frac{C}{T-t}\leq \dot{u_{t,\epsilon}}\leq C, \ \forall t\in[T-\bar{t},T).
$$
\begin{prop}
There exists $C$ independent of $t$ and $\epsilon$ such that
$$
 |\nabla w_{t,\epsilon}|_{C^0}\leq C, \ \forall t\in[T-\bar{t},T).
$$
In particular, since $\dot{u_{t,\epsilon}}$ converges to a locally bounded function on $M\backslash (\mathcal{S}_{M}\cup D)$ as $t\rightarrow T$ and $\epsilon\rightarrow 0$, one has
$$
 |\nabla u_T|_{C^0}\leq C, \forall t\in[T-\bar{t},T).
$$
\end{prop}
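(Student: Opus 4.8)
The plan is to establish the uniform bound $\sup_M|\nabla w_{t,\epsilon}|^2_{\omega_{t,\epsilon}}\le C$ by a Bernstein-type argument, i.e.\ the maximum principle applied on the \emph{compact} manifold $M$; this is legitimate because for each $\epsilon>0$ both the metric $\omega_{t,\epsilon}$ and the function $w_{t,\epsilon}$ are smooth on all of $M$. Abbreviate $\omega=\omega_{t,\epsilon}$, $w=w_{t,\epsilon}$, $u=u_{t,\epsilon}$ and $\phi=|\nabla w|^2_{\omega}$. First I would combine the Bochner formula on the K\"ahler manifold $(M,\omega)$ with the Ricci lower bound $Ric(\omega)\ge-\tfrac1t\omega$ (established in Section 2 and reused in Proposition 3.7); discarding the nonnegative holomorphic-Hessian term and retaining the mixed Hessian $\sum_{i,j}|w_{i\bar j}|^2\ge\tfrac1n(\triangle_\omega w)^2$, this gives
\[
\triangle_\omega\phi\ \ge\ \tfrac1n(\triangle_\omega w)^2+2\,\mathrm{Re}\,\langle\nabla w,\nabla(\triangle_\omega w)\rangle_\omega-\tfrac1t\phi .
\]

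Next I would differentiate the linear equation (3.8), obtaining $\nabla(\triangle_\omega w)=\tfrac1t\nabla w-\tfrac{T}{t^2}\nabla u-\nabla(\mathrm{tr}_\omega\eta_T)$ and hence
\[
2\,\mathrm{Re}\,\langle\nabla w,\nabla(\triangle_\omega w)\rangle_\omega=\tfrac2t\phi-\tfrac{2T}{t^2}\mathrm{Re}\,\langle\nabla w,\nabla u\rangle_\omega-2\,\mathrm{Re}\,\langle\nabla w,\nabla(\mathrm{tr}_\omega\eta_T)\rangle_\omega .
\]
The term $\tfrac2t\phi$ has the favourable sign. For the $\nabla u$ term one uses $w=(T-t)\dot{u}_{t,\epsilon}+u$, so that $\nabla u=\nabla w-(T-t)\nabla\dot{u}_{t,\epsilon}$; since $T-t\le\bar{t}$ this contribution is small once a uniform bound on $|\nabla u|_{\omega}$ is available, and the latter is the classical gradient estimate for the complex Monge--Amp\`ere equation (3.1), in which the apparent third-order terms cancel by the $\log\det$ structure and the gradient of the right-hand side is controlled with respect to $\omega$ precisely because $\omega$ blows up along $D$. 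Feeding all of this into a test function such as $H=\phi\,e^{-\lambda w}$ (or $H=\phi-\lambda w^2$, both $u$ and $w$ being uniformly bounded) and evaluating at an interior maximum $x_0$ of $H$, the remaining terms are absorbed and one gets $\phi(x_0)\le C$, whence $\phi\le C$ on $M$ with $C$ independent of $t$ and $\epsilon$.

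\textbf{The main obstacle} is the term $\langle\nabla w,\nabla(\mathrm{tr}_\omega\eta_T)\rangle_\omega$: a naive pointwise estimate of $\nabla(\mathrm{tr}_\omega\eta_T)$ would require third-order control of $u$, which is not available. I expect one removes this difficulty using the semi-positivity of $\eta_T=\tfrac1{l_0}\Phi^{*}\omega_{FS}$ (the pullback of the Fubini--Study metric by the holomorphic map $\Phi$), together with the Schwarz-lemma computation already used for Proposition 3.7, which controls the second-order behaviour of $\mathrm{tr}_\omega\eta_T$; combined with the retained mixed-Hessian term and the Cauchy--Schwarz inequality this absorbs $2|\langle\nabla w,\nabla(\mathrm{tr}_\omega\eta_T)\rangle_\omega|$. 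Equivalently, one may build an extra factor $e^{-A\,\mathrm{tr}_\omega\eta_T}$ into the test function $H$, which is admissible because $0\le\mathrm{tr}_\omega\eta_T\le C$ by Proposition 3.7.

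\textbf{Passing to the limit.} Finally, by Theorem 1.2 and the estimates above, $u_{t,\epsilon}$ converges in $C^\infty$ on compact subsets of $M\setminus(\mathcal{S}_{M}\cup D)$ to $u_t$ and then to $u_T$, and $\dot{u}_{t,\epsilon}$ converges there to a locally bounded function; since $w_{t,\epsilon}=(T-t)\dot{u}_{t,\epsilon}+u_{t,\epsilon}\to u_T$, the uniform bound $|\nabla w_{t,\epsilon}|_{\omega_{t,\epsilon}}\le C$ descends to $|\nabla u_T|_{\omega_T}\le C$ on the locus where $u_T$ is smooth, which is the stated conclusion.
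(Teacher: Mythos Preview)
Your overall strategy --- Bochner formula for $|\nabla w|^2$, differentiate equation (3.8), absorb the two bad cross terms, then apply the maximum principle on the compact $M$ --- is exactly the one the paper uses, and your treatment of the term $\langle\nabla w,\nabla(\mathrm{tr}_\omega\eta_T)\rangle$ via the Schwarz--lemma inequality
\[
\triangle_\omega(\mathrm{tr}_\omega\eta_T)\ \ge\ -C+C\,|\nabla(\mathrm{tr}_\omega\eta_T)|^2
\]
and the addition of a multiple of $\mathrm{tr}_\omega\eta_T$ to the test function is precisely what the paper does.

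Where your proposal diverges is in the handling of the other cross term, $-\tfrac{2T}{t^2}\mathrm{Re}\langle\nabla w,\nabla u\rangle$. After Cauchy--Schwarz this costs a term $-C|\nabla u|^2$, and you plan to kill it by first proving, separately, a uniform bound $|\nabla u_{t,\epsilon}|_{\omega_{t,\epsilon}}\le C$ via the ``classical gradient estimate for the complex Monge--Amp\`ere equation''. This is the weak point. Such an estimate (of B\l ocki type) needs a lower bound on the bisectional curvature of the reference metric and control of the gradient of the right-hand side; here the reference form $\eta_t$ degenerates as $t\to T$ and the $(1-\beta)\log(\epsilon^2+|s_D|^2)$ term in the right-hand side has to be checked carefully. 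It may well be salvageable, but it is an additional nontrivial estimate that the paper never uses nor establishes.

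The paper avoids this detour entirely: instead of bounding $|\nabla u|$ first, it absorbs $|\nabla u|^2$ directly by including $u^2$ (and $-Cu$) in the test function. Indeed
\[
\triangle_\omega u^2=2u\,\triangle_\omega u+2|\nabla u|^2\ \ge\ 2|\nabla u|^2-C\,\tfrac{T-t}{T}\,\mathrm{tr}_\omega\omega_0-C,
\]
and the spurious $\mathrm{tr}_\omega\omega_0$ term is in turn neutralised by $\triangle_\omega(-u)\ge\tfrac{T-t}{T}\,\mathrm{tr}_\omega\omega_0-C$. The paper's test function is therefore
\[
H=|\nabla w|^2+4tC\,\mathrm{tr}_\omega\eta_T+\tfrac{8T^2}{t^3}u^2-\tfrac{8T^2}{t^3}C\,u,
\]
for which one gets directly $\triangle_\omega H\ge\tfrac{1}{2t}|\nabla w|^2-C$, and the maximum principle finishes. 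This is both shorter and self-contained; your suggested $H=\phi e^{-\lambda w}$ or $H=\phi-\lambda w^2$ does not by itself produce a good $+|\nabla u|^2$ term, which is why you were forced to appeal to an external gradient estimate. Your limit argument in the final paragraph is fine and matches the paper.
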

\begin{proof}
Recall that $Ric(\omega_{t,\epsilon})\geq -\frac{1}{t}(\omega_{t,\epsilon}-\omega_0)$, so by the Bochner formula,
$$
\triangle|\nabla w_{t,\epsilon}|^2\geq |\nabla\nabla w_{t,\epsilon}|^2+|\nabla\overline{\nabla} w_{t,\epsilon}|^2
+\nabla_i\triangle w_{t,\epsilon}\cdot \nabla_{\bar{i}}w_{t,\epsilon}+\nabla_{\bar{i}}\triangle w_{t,\epsilon}\cdot
\nabla_i w_{t,\epsilon}+\frac{1}{t}(\omega_0-\omega_{t,\epsilon})_{i\bar{j}}\nabla_i w_{t,\epsilon} \nabla_{\bar{j}}
w_{t,\epsilon}
$$
where we omit the metric for the convenience. By (3.8) one has
$$
\nabla_i\triangle w_{t,\epsilon}\cdot \nabla_{\bar{i}}w_{t,\epsilon}+\nabla_{\bar{i}}\triangle w_{t,\epsilon}\cdot
\nabla_i w_{t,\epsilon}=\frac{2}{t}|\nabla w_{t,\epsilon}|^2-2Re(\nabla_i tr_{\omega_{t,\epsilon}\eta_T}\cdot \nabla_{\bar{i}} w_{t,\epsilon})-\frac{2T}{t^2}Re(\nabla_i u_{t,\epsilon}\cdot \nabla_{\bar{i}}w_{t,\epsilon}).
$$
So,
$$
\triangle|\nabla w_{t,\epsilon}|^2\geq \frac{1}{2t}|\nabla w_{t,\epsilon}|^2-4t|\nabla tr_{\omega_{t,\epsilon}}\eta_T|^2
-\frac{16T^2}{t^3}|\nabla u_{t,\epsilon}|^2.
$$
Notice that
\begin{align*}
\triangle tr_{\omega_{t,\epsilon}}\eta_T &\geq tr_{\omega_{t,\epsilon}}\eta_T(-\frac{n}{t}-Atr_{\omega_{t,\epsilon}}\eta_T)+\frac{1}{tr_{\omega_{t,\epsilon}}\eta_T}
|\nabla tr_{\omega_{t,\epsilon}}\eta_T|^2 \\
&\geq -C+C|\nabla tr_{\omega_{t,\epsilon}}\eta_T|^2,
\end{align*}
and
$$
\triangle(-u_{t,\epsilon})=-n+\frac{T-t}{T}tr_{\omega_{t,\epsilon}}\omega_0+\frac{t}{T}tr_{\omega_{t,\epsilon}}\eta_T \\
\geq \frac{T-t}{T}tr_{\omega_{t,\epsilon}}\omega_0-C.
$$
and
$$
\triangle u_{t,\epsilon}^{2}=2u_{t,\epsilon}\triangle u_{t,\epsilon}+2|\nabla u_{t,\epsilon}|^2\geq 2|\nabla u_{t,\epsilon}|^2-C\frac{T-t}{T}tr_{\omega_{t,\epsilon}}\omega_0-C.
$$
Let $H=|\nabla w_{t,\epsilon}|^2+4tCtr_{\omega_{t,\epsilon}}\eta_T+\frac{8T^2}{t^3}u_{t,\epsilon}^{2}- \frac{8T^2}{t^3}Cu_{t,\epsilon}$, then one obtains
$$
 \triangle H\geq \frac{1}{2t}|\nabla w_{t,\epsilon}|^2-C,
$$
by the maximum principle one gets
$$
|\nabla w_{t,\epsilon}|_{C^0}\leq C.
$$
\end{proof}

\section{Algebraic structure of the limit space}
\subsection{Preliminaries}
In this subsection we introduce some useful formulas on a general line bundle. Let $(M,\omega)$ be a K\"{a}hler manifold of dimension $n$ and $(L,h)$ be a Hermitian line bundle over M. Let $\Theta_{h}$ be the Chern curvature form of $h$. Let $\nabla$ and $\overline{\nabla}$ denote the $(1, 0)$ and $(0, 1)$ part of
a connection respectively. The connection appeared in this paper is usually known as the Chern connection or Levi-Civita connection.

For a holomorphic section $\tau \in H^0(M,L)$ we write for simplicity
$$
 |\tau|=|\tau|_{h}, \ |\nabla \tau|_{h\otimes \omega}=|\nabla \tau|,
$$
and
$$
|\nabla\nabla \tau|^2=\sum_{i,j}|\nabla_{i}\nabla_{j}\tau|^2, \ |\nabla\overline{\nabla} \tau|^2=\sum_{i,j}|\nabla_{i}\nabla_{\bar{j}}\tau|^2.
$$
By direct computation we have
\begin{lem}(Bochner formulas).
For any $\tau \in H^0(M,L)$ one has
\begin{equation}
 \triangle_{\omega}|\tau|^2=|\nabla \tau|^2-|\tau|^2\cdot tr_{\omega}\Theta_{h}
\end{equation}
and
\begin{multline}
 \triangle_{\omega}|\nabla \tau|^2=|\nabla\nabla \tau|^2+|\nabla\overline{\nabla} \tau|^2   -\nabla_j(\Theta_h)_{i\bar{j}} \langle \tau,\nabla_{\bar{i}}\bar{\tau}\rangle-\nabla_{\bar{j}}(tr_{\omega}\Theta_h)\langle \nabla_j \tau,\bar{\tau}\rangle  \\  +R_{i\bar{j}}\langle \nabla_j \tau,\nabla_{\bar{i}}\bar{\tau}\rangle
 -2(\Theta_h)_{i\bar{j}}\langle\nabla_j\tau,\nabla_{\bar{i}}\bar{\tau}\rangle-|\nabla \tau|^2\cdot tr_{\omega}\Theta_h
\end{multline}
where $R_{i\bar{j}}$ is the Ricci curvature of $\omega$, $\langle,\rangle$ is the inner product defined by $h$.
\end{lem}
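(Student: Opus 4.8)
The plan is to prove the two identities separately: the first by a direct local computation, the second by the standard Bochner--Weitzenb\"{o}ck argument.

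For $\triangle_{\omega}|\tau|^2$, I would work in a local holomorphic frame $e$ of $L$ with $h(e,e)=a>0$, writing $\tau=fe$ with $f$ holomorphic, so that $|\tau|^2=|f|^2 a$, the Chern connection reads $\nabla_i\tau=(\partial_i f+f\,\partial_i\log a)e$, and $(\Theta_h)_{i\bar{j}}=-\partial_i\partial_{\bar{j}}\log a$. Differentiating $|\tau|^2$ once by $\partial_{\bar{j}}$ and once by $\partial_i$ and using $\partial_{\bar{j}}f=0$, every term carrying a $\bar\partial$-derivative of $f$ drops out and one is left with $\partial_i\partial_{\bar{j}}|\tau|^2=\langle\nabla_i\tau,\nabla_j\tau\rangle-|\tau|^2(\Theta_h)_{i\bar{j}}$; contracting with $g^{i\bar{j}}$ yields the first formula. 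Equivalently and invariantly, $\bar\partial\tau=0$ forces $\partial\bar\partial|\tau|^2=\langle\nabla^{1,0}\tau,\nabla^{1,0}\tau\rangle-|\tau|^2\Theta_h$ as $(1,1)$-forms, whose $tr_{\omega}$ is the assertion. The K\"{a}hler hypothesis is used only to identify $\triangle_{\omega}$ on functions with $g^{i\bar{j}}\partial_i\partial_{\bar{j}}$.

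For $\triangle_{\omega}|\nabla\tau|^2$, set $\sigma:=\nabla^{1,0}\tau$, a section of the Hermitian holomorphic bundle $E:=(T^{1,0}M)^{*}\otimes L$. The essential input is that $\sigma$ fails to be holomorphic exactly by the curvature of $L$: from $\bar\partial\tau=0$ one gets $\bar\partial_E\sigma=F_L\cdot\tau$, i.e. $\nabla_{\bar{j}}\nabla_i\tau=-(\Theta_h)_{i\bar{j}}\tau$. Expanding $\triangle_{\omega}|\sigma|^2=g^{i\bar{j}}\partial_i\partial_{\bar{j}}|\sigma|^2$ via metric-compatibility of the Chern connection on $E$ produces four contracted terms, of which one is $|\nabla\nabla\tau|^2$ and another is $|\bar\partial_E\sigma|^2=|\nabla\overline{\nabla}\tau|^2$ by the identity above. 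For the remaining two I would commute covariant derivatives using the identity expressing $[\nabla_i,\nabla_{\bar{j}}]\sigma$ through the curvature $F_E(\partial_i,\partial_{\bar{j}})$; writing $F_E$ as the curvature of $(T^{1,0}M)^{*}$ plus $F_L$ and contracting, these yield in turn the term $R_{i\bar{j}}\langle\nabla_j\tau,\nabla_{\bar{i}}\bar\tau\rangle$ (trace of the Riemann curvature, using the K\"{a}hler identities), the term $-2(\Theta_h)_{i\bar{j}}\langle\nabla_j\tau,\nabla_{\bar{i}}\bar\tau\rangle$, and, after a further contraction, $-|\nabla\tau|^2\,tr_{\omega}\Theta_h$. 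The one surviving piece $\nabla_{\bar{j}}\nabla_i\sigma$ is then rewritten through $\bar\partial_E\sigma=F_L\cdot\tau$ and one more commutation as a covariant derivative of $\Theta_h$ applied to $\tau$, which supplies the two terms $\nabla_j(\Theta_h)_{i\bar{j}}\langle\tau,\nabla_{\bar{i}}\bar\tau\rangle$ and $\nabla_{\bar{j}}(tr_{\omega}\Theta_h)\langle\nabla_j\tau,\bar\tau\rangle$. Collecting everything gives the second formula.

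The only real difficulty is bookkeeping: each step — differentiate twice, apply metric-compatibility, commute covariant derivatives, use $\bar\partial\tau=0$ — is routine, but assembling all the curvature contributions with consistent signs and index conventions (keeping the curvatures of the two factors of $E$ apart, and invoking the K\"{a}hler symmetries and the second Bianchi identity to cast the result in the stated form) takes care. It is also worth noting the two further places the K\"{a}hler hypothesis enters: it makes $\nabla_i\nabla_p\tau$ symmetric in $i,p$ (no $(2,0)$-torsion), and it is what lets the contraction of the full curvature of $(T^{1,0}M)^{*}$ against $\omega$ reproduce the Ricci tensor $R_{i\bar{j}}$.
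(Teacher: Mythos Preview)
Your proposal is correct and matches the paper's approach: the paper offers no argument beyond the phrase ``by direct computation,'' and what you outline is precisely that direct computation carried out in the standard way. Both identities are routine consequences of metric compatibility, the holomorphicity $\bar\partial\tau=0$, the curvature identity $\nabla_{\bar j}\nabla_i\tau=-(\Theta_h)_{i\bar j}\tau$, and the commutator formula on $(T^{1,0}M)^*\otimes L$, exactly as you describe.
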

\subsection{Gromov-Hausdorff convergence: global convergence}
In this subsection we consider a family of manifolds $(M, \omega_{t, \epsilon})$ on which the lower bound of  Ricci curvature can be controlled, i.e. $Ric(\omega_{t, \epsilon})\geq -\frac{1}{T-\bar{t}}\omega_{t, \epsilon}$ for
$t\in[T-\bar{t},T)$. By Gromov precompactness theorem, passing to a subsequence $(t_i, \epsilon_i) \rightarrow (T,0)$
and fix $x_0\in M\backslash (\mathcal{S}_{M}\cup D)$, we may assume that
$$
  (M, \omega_{t_i,\epsilon_i}, x_0)\xrightarrow{d_{GH}} (M_T, d_T, x_T).
$$
The limit $(M_T,d_T)$ is a complete length metric space, maybe noncompact in a prior. It has a regular/singular decomposition $M_T=\mathcal{R}\cup \mathcal{S}$, a point $x\in \mathcal{R}$ iff the tangent cone at $x$ is the Euclidean space $\mathbb{R}^{2n}$. The proof of the following lemma is exactly same as \cite{GS} so we omit it.
\begin{lem}
There is a sufficiently small constant $\delta>0$ such that for any $t\in[T-\bar{t},T)$ and $\epsilon\geq 0$, if a metric ball $B_{\omega_{t,\epsilon}}(x,r)$ satisfies
$$ Vol(B_{\omega_{t,\epsilon}}(x,r))\geq (1-\delta)Vol(B^0_{r}) \ and \ B_{\omega_{t,\epsilon}}(x,r)\cap D=\varnothing$$ where $Vol(B^0_{r})$ is the volume of a metric ball of radius $r$ in $2n$-Euclidean space,
then $$Ric(\omega_{t,\epsilon})\leq (2n-1)r^{-2}\omega_{t,\epsilon}, \ in \ B_{\omega_{t,\epsilon}}(x, \delta r).$$
\end{lem}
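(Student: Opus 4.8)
The plan is to argue by contradiction, combining the almost-volume-rigidity theory of Cheeger and Colding with the interior elliptic regularity supplied by the continuity equation. Suppose no such $\delta$ exists. Then one can find $\delta_j\downarrow 0$, parameters $t_j\in[T-\bar t,T)$ and $\epsilon_j\geq 0$, centres $x_j$, radii $r_j$, and points $y_j\in B_{\omega_{t_j,\epsilon_j}}(x_j,\delta_j r_j)$ such that $\mathrm{Vol}\bigl(B_{\omega_{t_j,\epsilon_j}}(x_j,r_j)\bigr)\geq(1-\delta_j)\mathrm{Vol}(B^0_{r_j})$, the ball is disjoint from $D$, but $\mathrm{Ric}(\omega_{t_j,\epsilon_j})\not\leq(2n-1)r_j^{-2}\omega_{t_j,\epsilon_j}$ at $y_j$. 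Since $\omega_{t,\epsilon}^n\leq C|s_D|_{h_D}^{-2(1-\beta)}\Omega$ by Corollary 3.3, the total volumes $\int_M\omega_{t,\epsilon}^n$ are uniformly bounded, hence $\mathrm{Vol}(B^0_{r_j})\leq C$ and the $r_j$ stay bounded. Rescaling by $g_j:=r_j^{-2}\omega_{t_j,\epsilon_j}$ one gets $\mathrm{Vol}_{g_j}(B_{g_j}(x_j,1))\geq(1-\delta_j)\mathrm{Vol}(B^0_1)$; the bound $\mathrm{Ric}(\omega_{t,\epsilon})\geq-\tfrac{1}{T-\bar t}\omega_{t,\epsilon}$ noted above gives $\mathrm{Ric}(g_j)\geq-K_0 g_j$ with $K_0$ independent of $j$ (and $K_0\to 0$ when $r_j\to 0$, which is the scale regime in which the lemma is used); $d_{g_j}(x_j,D)\geq 1$ because the ball misses $D$; $d_{g_j}(x_j,y_j)<\delta_j\to 0$; and the largest eigenvalue of $\mathrm{Ric}(g_j)$ relative to $g_j$ at $y_j$ exceeds $2n-1$.

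First I would invoke Bishop--Gromov together with Colding's volume-continuity theorem and the Cheeger--Colding almost-rigidity theorem: the pinched-volume hypothesis and the lower Ricci bound force $(B_{g_j}(x_j,\tfrac34),g_j,x_j)$ to converge, along a subsequence and in the pointed Gromov--Hausdorff topology, to the flat Euclidean ball $(B^0_{3/4},g_{\mathrm{eucl}},0)$. The essential second step is to upgrade this to $C^\infty_{\mathrm{loc}}$ convergence on $B^0_{1/2}$. Here one uses that away from $D$ the metric $\omega_{t,\epsilon}$ is a smooth K\"ahler metric solving the complex Monge--Amp\`ere equation (2.7): since $d_{g_j}(x_j,D)\geq 1$, the conical weight $(\epsilon_j^2+|s_D|^2_{h_D})^{1-\beta}$ stays bounded below on $B_{g_j}(x_j,\tfrac34)$ (using the comparison between $\omega_{t,\epsilon}$ and $\omega_0$ near $D$ from Sections 2--3, which controls how close to $D$ a non-collapsed, almost-Euclidean ball can lie), so in suitable local coordinates the Monge--Amp\`ere densities are pinched between two positive constants and the local potentials are uniformly bounded. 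Pluripotential $L^\infty$ estimates of the type used for the uniform $C^0$ bound on $u_{t,\epsilon}$, followed by the interior $W^{2,p}$, Evans--Krylov and Schauder estimates, then give uniform $C^k$ bounds for $g_j$ on $B_{g_j}(x_j,\tfrac12)$ for every $k$. Thus $g_j\to g_{\mathrm{eucl}}$ in $C^\infty_{\mathrm{loc}}$ and $\mathrm{Ric}(g_j)\to\mathrm{Ric}(g_{\mathrm{eucl}})=0$; but $y_j\to 0$ and the top eigenvalue of $\mathrm{Ric}(g_j)$ at $y_j$ is $>2n-1$, so in the limit the top eigenvalue of $\mathrm{Ric}(g_{\mathrm{eucl}})$ at $0$ is $\geq 2n-1>0$, a contradiction. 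Hence some $\delta>0$ works.

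The main obstacle is exactly the passage from Gromov--Hausdorff to smooth convergence: by itself the Cheeger--Colding theory allows the Ricci tensor to concentrate without affecting volumes, so the continuity equation must enter in an essential way, just as in the classical $\epsilon$-regularity for Einstein metrics. The delicate points are (i) showing that the non-collapsed almost-Euclidean hypothesis keeps the rescaled balls a definite distance from $D$, so the degenerate conical weight in the Monge--Amp\`ere equation does not spoil the elliptic estimates, and (ii) making all the pluripotential-theoretic and Schauder estimates uniform in $t$ and $\epsilon$, including the limiting case $\epsilon=0$; both are handled with the a priori estimates of Sections 2--3. With these in place the argument runs exactly as in \cite{GS}.
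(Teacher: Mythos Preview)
The paper itself gives no proof of this lemma; it simply says ``The proof of the following lemma is exactly same as \cite{GS} so we omit it.'' Your outline is in the right spirit and, like the paper, ultimately defers to Sz\'ekelyhidi. The overall architecture --- contradiction, rescaling, Cheeger--Colding almost rigidity, then elliptic regularity from the equation --- is correct.

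There is, however, a genuine circularity in your implementation of the regularity step. You propose to pass from Gromov--Hausdorff convergence to $C^{\infty}$ convergence by ``interior $W^{2,p}$, Evans--Krylov and Schauder estimates'' applied to the rescaled Monge--Amp\`ere equation. But these interior second-order estimates require as input a two-sided bound on the metric (equivalently, a Laplacian bound on the local potential), and that is essentially the Ricci upper bound you are trying to establish. With only a Ricci lower bound, GH convergence to flat space does \emph{not} by itself yield $C^{1,\alpha}$ convergence of the metrics; Anderson's harmonic-radius estimate needs two-sided Ricci bounds. So the bootstrap cannot start.

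Sz\'ekelyhidi's actual argument in \cite{GS} avoids this. From the equation one has $\mathrm{Ric}(\omega_{t,\epsilon})=-t^{-1}\omega_{t,\epsilon}+t^{-1}\omega_{0}+(\text{nonnegative terms vanishing on }M\setminus D)$, so on a ball disjoint from $D$ an upper Ricci bound is equivalent to a bound on $f:=\mathrm{tr}_{\omega_{t,\epsilon}}\omega_{0}$. The Chern--Lu inequality gives $\Delta_{\omega_{t,\epsilon}} f\geq -C f^{2}$ with $C$ depending only on the bisectional curvature of $\omega_{0}$ and the uniform Ricci lower bound. If the conclusion fails one point-picks a point $q$ and a scale $\rho$ with $f(q)=\rho^{-2}$ large and $f\leq 4\rho^{-2}$ on $B(q,\rho)$; on this small ball the Ricci curvature is now \emph{two-sided} bounded by $O(\rho^{-2})$, so after rescaling by $\rho^{-2}$ Anderson's estimate gives $C^{1,\alpha}$ harmonic coordinates, and the inherited almost-Euclidean volume forces the rescaled metric to be $C^{1,\alpha}$-close to flat. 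But in these rescaled coordinates the fixed smooth form $\omega_{0}$ has components of size $O(\rho^{2})\to 0$, so its trace against a metric uniformly equivalent to Euclidean tends to $0$, contradicting $f'(q)=1$. The point-picking step is the missing idea that breaks the circularity in your version.
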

\begin{lem}
The regular set $\mathcal{R}$ is open in the limit space $(M_T, d_T, x_T)$.
\end{lem}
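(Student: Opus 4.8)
The plan is to show that the condition ``$x \in \mathcal{R}$'' is an open condition by upgrading the pointed Gromov--Hausdorff convergence to $C^\infty$-convergence near a regular point, using the two-sided Ricci bound of Lemma 4.3. First I would recall that, by Cheeger--Colding theory, a point $x \in M_T$ lies in $\mathcal{R}$ if and only if some tangent cone at $x$ is $\mathbb{R}^{2n}$, and this is equivalent (by volume monotonicity and the almost-volume-cone/almost-metric-cone theorem) to the almost-Euclidean volume condition: for every $\delta > 0$ there is $r_0 = r_0(x,\delta) > 0$ so that $\mathrm{Vol}(B_{d_T}(x,r)) \geq (1-\delta)\mathrm{Vol}(B^0_r)$ for all $r \leq r_0$. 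Pulling this back, for $(t_i,\epsilon_i) \to (T,0)$ and points $x_i \in M$ with $x_i \to x$, we get metric balls $B_{\omega_{t_i,\epsilon_i}}(x_i, r)$ with volume ratio $\geq (1 - 2\delta)$ once $i$ is large. Here one must also check that these balls eventually avoid $D$: since $x \notin D$ (a regular point cannot lie over the singular/conical locus — here I would invoke that $D \subset \mathcal{S}_M \cup D$ maps into $\mathcal{S}$, or more directly that near $D$ the volume ratios degenerate because of the conical singularity, so the almost-Euclidean condition forces $x$ away from $D$), the hypothesis $B_{\omega_{t_i,\epsilon_i}}(x_i,r)\cap D=\varnothing$ of Lemma 4.3 holds for small $r$ and large $i$.

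Next I would apply Lemma 4.3 to obtain the reverse Ricci bound $\mathrm{Ric}(\omega_{t_i,\epsilon_i}) \leq (2n-1)r^{-2}\omega_{t_i,\epsilon_i}$ on $B_{\omega_{t_i,\epsilon_i}}(x_i,\delta r)$, so that on this ball the metrics $\omega_{t_i,\epsilon_i}$ enjoy a \emph{two-sided} bound $|\mathrm{Ric}| \leq C(r)$. Combined with the non-collapsing volume lower bound (which persists: $\mathrm{Vol}(B(x_i,s)) \geq c\, s^{2n}$ for small $s$, from the almost-Euclidean condition and Bishop--Gromov), Anderson's $\epsilon$-regularity theorem — or Cheeger--Colding--Tian — gives a uniform harmonic-radius lower bound at $x_i$, and hence, after passing to harmonic coordinates, uniform $C^{1,\alpha}$ (indeed $C^\infty$ via the elliptic equations satisfied by $u_{t,\epsilon}$ and the bounds of Section 3) control of the metrics on a fixed small ball $B_{\omega_{t_i,\epsilon_i}}(x_i, \delta' r)$. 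Therefore a subsequence converges smoothly to a smooth limit metric on a ball $B_{d_T}(x,\rho)$, $\rho = \delta' r$, which shows $B_{d_T}(x,\rho)$ consists entirely of manifold points; in particular the tangent cone at every $y \in B_{d_T}(x,\rho)$ is $\mathbb{R}^{2n}$, i.e. $B_{d_T}(x,\rho) \subset \mathcal{R}$.

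The main obstacle I expect is the interface with $D$: one must be sure that regular points of the limit never sit ``on top of'' the divisor $D$, since Lemma 4.3 is only available on balls disjoint from $D$. The cleanest route is to observe that, by Corollary 3.2 and the conical structure of $\omega_{t,\epsilon}$ near $D$, any point of $M_T$ arising as a limit of points converging to $D$ has a tangent cone with a nontrivial cone angle $2\pi\beta$ transverse direction — hence volume ratio strictly less than $1$ — so such a point cannot satisfy the almost-Euclidean condition and thus cannot lie in $\mathcal{R}$. Granting this, the argument above closes: every regular point has a whole metric ball of regular points around it, so $\mathcal{R}$ is open. The remaining inputs — Anderson's regularity, Cheeger--Colding volume convergence and volume monotonicity — are standard for sequences with two-sided Ricci bounds and non-collapsed volume, and I would cite them rather than reprove them.
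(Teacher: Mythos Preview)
Your proposal is correct and follows essentially the same route as the paper: almost-Euclidean volume at a regular point pulls back along the sequence, one checks the approximating balls avoid $D$, then Lemma~4.3 gives a two-sided Ricci bound and Anderson's harmonic radius estimate yields a $C^{1,\alpha}$ chart, hence an open neighborhood inside $\mathcal{R}$. The only place the paper is sharper is the $D$-avoidance step: rather than invoking a tangent-cone picture, it works with the conical metrics $\omega_{t_i}$ (via Proposition~2.8) and uses the explicit density bound $\mathrm{Vol}(B_{\omega_{t_i}}(y_i,\bar r))/\mathrm{Vol}(B^0_{\bar r})\leq(1+\delta)\beta$ for $y_i\in D$ together with Bishop--Gromov relative volume comparison to force a contradiction with the almost-Euclidean bound at $x_i$; your sketch ``volume ratios degenerate near $D$'' is exactly this, so you would only need to make that inequality precise.
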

\begin{proof}
We follow Tian's argument \cite{T1}. By Proposition (2.9), one has $(M, \omega_{t_i}, x_0)\xrightarrow{d_{GH}} (M_T, d_T, x_T)$. If $x\in \mathcal{R}$, then by Colding's volume convergence theorem \cite{Co}, there exists $r=r(x)>0$ such that $\mathcal{H}^{2n}(B_{d_T}(x,r))\geq (1-\frac{\delta}{2})Vol(B^0_{r})$, where $\mathcal{H}^{2n}$ denotes the Hausdorff measure. Let $\{x_i\}$ be a sequence of points in $M$ such that $x_i\xrightarrow{d_{GH}} x$, then by the volume convergence theorem again, $Vol(B_{\omega_{t_i}}(x_i,r))\geq (1-\delta)Vol(B^0_{r})$ for $i$ sufficiently large. On the other hand, if $y_i\in D$, then by the
Bishop-Gromov volume comparison theorem, for any $\bar{r}>0$, one has (set $a=-\frac{1}{T-\bar{t}}$)
$$
  \frac{Vol(B_{\omega_{t_i}}(y_i,\bar{r}))}{Vol(B_{\bar{r}}^{a})}\leq \beta.
$$
Furthermore, when $\bar{r}$ is sufficiently small, we have
$$
\frac{Vol(B_{\omega_{t_i}}(y_i,\bar{r}))}{Vol(B_{\bar{r}}^{0})}=\frac{Vol(B_{\omega_{t_i}}(y_i,\bar{r}))}{Vol(B_{\bar{r}}^{a})}\cdot \frac{Vol(B_{\bar{r}}^{a})}{Vol(B_{\bar{r}}^{0})}\leq (1+\delta)\beta.
$$
Note that $B_{\omega_{t_i}}(x_i,r)\xrightarrow{d_{GH}}B_{d_T}(x,r)$, so by the
Bishop-Gromov volume comparison theorem there exists an $N=N(\delta)$ such that for any $\tilde{r}\in (0,\frac{r}{N})$ and
$y_i\in B_{\omega_{t_i}}(x_i,\tilde{r})$, one gets
$$
1-\delta\leq \frac{Vol(B_{\omega_{t_i}}(y_i,\tilde{r}))}{Vol(B_{\omega_{t_i}}(x_i,\tilde{r}))}\leq 1+\delta.
$$
Now, we claim that $B_{\omega_{t_i}}(x_i,r')\cap D=\varnothing$ where $r'=min\{\bar{r},\tilde{r}\}$. If this claim is false, we assume $y_i\in B_{\omega_{t_i}}(x_i,r')$ for all i sufficiently large, we have
$$
1-\delta\leq \frac{Vol(B_{\omega_{t_i}}(x_i,r'))}{Vol(B_{r'}^{0})}\leq (1+\delta)\frac{Vol(B_{\omega_{t_i}}(y_i,r'))}{Vol(B_{r'}^{0})}\leq (1+\delta)^2\beta.
$$
Then we get a contradiction if $\delta$ is chosen sufficiently small. According to above lemma, together with Anderson¨s harmonic radius estimate \cite{An}, there is $\delta^{'}=\delta^{'}(\alpha)>0$ for any $0<\alpha<1$ such that the $C^{1,\alpha}$ harmonic radius at $x_i$ is bigger than $\delta^{'} r'$. Passing to the limit, it gives a harmonic coordinate on $B_{d_T}(x,\delta^{'} r')$. This implies in particular that $B_{d_T}(x,\delta^{'}r')\subset \mathcal{R}$. So $\mathcal{R}$ is open with a $C^{1,\alpha}$ K\"{a}hler metric, denoted by $\overline{\omega_T}$; moreover the metric $\omega_{t_i,\epsilon_i}$ or $\omega_{t_i}$ converges in $C^{1,\alpha}$ topology to $\overline{\omega_T}$ on $\mathcal{R}$ for any $0<\alpha<1$.
\end{proof}

For any metric $\omega$, let $d_\omega$ be the length metric induced by $\omega$.
\begin{lem}
$(M_T,d_T)=\overline{(\mathcal{R}, d_{\overline{\omega_T}})}$, the metric completion of $(\mathcal{R},d_{\overline{\omega_T}})$.
\end{lem}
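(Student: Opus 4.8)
The plan is to verify the two properties that characterise a metric completion: that $\mathcal R$ is dense in $(M_T,d_T)$, and that the inclusion $\mathcal R\hookrightarrow M_T$ is isometric when $\mathcal R$ is equipped with the length metric $d_{\overline{\omega_T}}$ induced by the $C^{1,\alpha}$ limit metric $\overline{\omega_T}$. Since $M$ is compact, $(M_T,d_T)$ is a complete length space; once the two properties are established, $M_T$ is the closure of a densely and isometrically embedded copy of $(\mathcal R,d_{\overline{\omega_T}})$, hence its metric completion.

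\emph{Density of $\mathcal R$.} The metrics $\omega_{t_i,\epsilon_i}$ satisfy the uniform lower bound $\mathrm{Ric}(\omega_{t_i,\epsilon_i})\ge-\frac{1}{T-\bar t}\,\omega_{t_i,\epsilon_i}$, and the limit is noncollapsed: the uniform lower bound $\omega_{t,\epsilon}^n\ge C^{-1}\Omega$ near the regular basepoint $x_0$ from Section~3, together with Bishop--Gromov, produces a uniform volume lower bound for unit balls over every bounded region of $M_T$. Hence the Cheeger--Colding structure theory applies: $\mathcal R$ has full $\mathcal H^{2n}$-measure in $M_T$ and $\mathcal S$ is closed of Hausdorff dimension $\le 2n-2$; in particular $\mathcal H^{2n-1}(\mathcal S)=0$ and $\mathcal R$ is dense. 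One should also note that the Gromov--Hausdorff limit of $D$ is contained in $\mathcal S$: by the volume-ratio estimate in the proof of the openness lemma ($\mathrm{Vol}(B_{\omega_{t_i}}(y_i,\bar r))\le(1+\delta)\beta\,\mathrm{Vol}(B_{\bar r}^0)$ for $y_i\in D$ and $\bar r$ small) no limit point of $D$ has Euclidean tangent cone — consistent with $\mathcal S$ having real codimension $\ge 2$, so the conical divisor introduces no codimension-one obstacle.

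\emph{The metric identification.} It remains to prove $d_T(p,q)=d_{\overline{\omega_T}}(p,q)$ for all $p,q\in\mathcal R$. One inequality is easy: by the openness lemma, $\omega_{t_i,\epsilon_i}\to\overline{\omega_T}$ in $C^{1,\alpha}$ on compact subsets of $\mathcal R$, so along any curve $\gamma$ in $\mathcal R$ the $d_T$-length coincides with the $\overline{\omega_T}$-length; thus $d_T(p,q)\le L_{\overline{\omega_T}}(\gamma)$ and, taking the infimum over such $\gamma$, $d_T\le d_{\overline{\omega_T}}$ on $\mathcal R$. The reverse inequality $d_{\overline{\omega_T}}(p,q)\le d_T(p,q)$ is the crux and, I expect, the main obstacle. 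Choose $p_i\to p$ and $q_i\to q$ in $M$ and minimal $\omega_{t_i,\epsilon_i}$-geodesics $\gamma_i$ joining them; after passing to a subsequence their images under the Gromov--Hausdorff approximations converge to a minimal $d_T$-geodesic $\gamma$ in $M_T$ of length $d_T(p,q)$. If $\gamma$ were contained in $\mathcal R$ we would be done, since there the $C^{1,\alpha}$ convergence forces $\gamma$ to be a minimal $\overline{\omega_T}$-geodesic of the same length, whence $d_{\overline{\omega_T}}(p,q)\le L_{\overline{\omega_T}}(\gamma)=d_T(p,q)$. In general $\gamma$ may meet $\mathcal S$, and this is removed by a perturbation argument: since $\mathcal S$ is closed with $\mathcal H^{2n-1}(\mathcal S)=0$ it cannot obstruct $1$-dimensional paths, so $\gamma$ can be pushed into $\mathcal R$ within an arbitrarily thin neighbourhood of itself at the cost of an arbitrarily small increase of $\overline{\omega_T}$-length. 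This is precisely the mechanism used in the non-conical case \cite{NTZ} and in \cite{CDS1,T1}; it carries over here because on $\mathcal R$ the convergence is genuinely $C^{1,\alpha}$ and the conical locus has already been absorbed into $\mathcal S$, so the cone singularities cause no new difficulty. Combining the two inequalities yields $d_T|_{\mathcal R\times\mathcal R}=d_{\overline{\omega_T}}$; with the density of $\mathcal R$ and the completeness of $(M_T,d_T)$ this exhibits $(M_T,d_T)$ as the metric completion of $(\mathcal R,d_{\overline{\omega_T}})$.
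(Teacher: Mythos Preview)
Your proposal is correct and follows essentially the same approach as the paper: both arguments rest on the Cheeger--Colding fact that $\mathrm{Codim}(\mathcal S)\ge 2$ (hence $\mathcal R$ is dense) together with the $C^{1,\alpha}$ convergence of $\omega_{t_i,\epsilon_i}$ to $\overline{\omega_T}$ on $\mathcal R$. The paper's proof is terser---it packages the geodesic-perturbation step into the assertion that the embeddings $\phi_i:K_i\to M$ furnish Gromov--Hausdorff approximations (invoking \cite{ChCo2}) and then reads off $(\mathcal R,d_T|_{\mathcal R})=(\mathcal R,d_{\overline{\omega_T}})$ directly from $\phi_i^{*}\omega_{t_i,\epsilon_i}\xrightarrow{C^{1,\alpha}}\overline{\omega_T}$---whereas you spell out the two inequalities $d_T\le d_{\overline{\omega_T}}$ and $d_{\overline{\omega_T}}\le d_T$ separately, making the role of $\mathcal H^{2n-1}(\mathcal S)=0$ in the perturbation explicit; but the content is the same.
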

\begin{proof}
By the previous argument one finds an exhaustion of $\mathcal{R}$ by compact subsets $K_i$ with $K_i\subset K_{i+1}$ and a sequence of embeddings
$\phi_i: K_i\rightarrow M$ such that $\phi_i(x_T)=x_0$. Thus $\phi_i$ defines a Gromov-Hausdorff approximation of the convergence $(M, \omega_{t_i,\epsilon_i}, x_0)\xrightarrow{d_{GH}} (M_T, d_T, x_T)$ because $Codim(\mathcal{S})\geq 2$ \cite{ChCo2}. There is a fact that $\phi_{i}^{*}\omega_{t_i,\epsilon_i}\xrightarrow{C^{1,\alpha}}\overline{\omega_T}$ which demonstrates that $(\mathcal{R},d_T|_{\mathcal{R}})=(\mathcal{R},d_{\overline{\omega_T}})$. Notice that $(\mathcal{R},d_T)$ is dense in $(M_T, d_T, x_T)$ because $Codim(\mathcal{S})\geq 2$. Therefore the lemma is proved.
\end{proof}
\begin{lem}
$\mathcal{R}$ is geodesically convex in $M_T$ in the sense that any minimal geodesic with endpoints in $\mathcal{R}$ lies in $\mathcal{R}$.
\end{lem}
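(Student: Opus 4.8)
Suppose, for contradiction, that $\gamma\colon[0,L]\to M_T$ is a unit-speed minimizing geodesic with $\gamma(0),\gamma(L)\in\mathcal R$ but $\gamma^{-1}(\mathcal S)\neq\varnothing$. Since $\mathcal R$ is open and $(M_T,d_T)=\overline{(\mathcal R,d_{\overline{\omega_T}})}$ (the two preceding lemmas), the set $F:=\gamma^{-1}(\mathcal S)$ is a nonempty compact subset of the open interval $(0,L)$; fix $x=\gamma(s_0)$ with $s_0\in F$, so that $x$ is an \emph{interior} point of the minimizing geodesic $\gamma$ and $\gamma$ extends a definite amount on both sides of $x$. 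The plan is to argue, along the lines of \cite{NTZ} and \cite{CDS1}, that this is impossible, the new ingredient being the local description of $(M_T,d_T)$ along the image $D_T$ of the conical divisor.

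First I would record the local models. Combining the Gromov--Hausdorff convergence $(M,\omega_{t_i,\epsilon_i})\to(M_T,d_T)$, the uniform lower bound $Ric(\omega_{t_i,\epsilon_i})\ge-\frac{1}{T-\bar t}\omega_{t_i,\epsilon_i}$, the local upper Ricci bound on volume-almost-Euclidean balls disjoint from $D$ proved above, the smooth approximations $\omega_{t,\epsilon}$ of the conical metric, and the Cheeger--Colding/Anderson theory, one obtains: every tangent cone $C_yM_T$ is a metric cone; $y\in\mathcal R$ iff $C_yM_T=\mathbb{R}^{2n}$; at a point $y$ on the ``good'' part $D_T^{\mathrm{reg}}$ of $D_T$ one has $C_yM_T=\mathbb{C}^{n-1}\times C_{2\pi\beta}$, where $C_{2\pi\beta}$ is the flat $2$-dimensional cone of total angle $2\pi\beta<2\pi$; and the remaining singular set $\mathcal S':=\mathcal S\setminus D_T^{\mathrm{reg}}$ (images of $\mathcal S_M$ and of the non-smooth locus of $D_T$) has strictly larger codimension, with tangent cones $\mathbb{C}^{j}\times C(Z)$ for a lineless K\"{a}hler cone $C(Z)$ coming from the divisorial contractions defining $\mathcal S_M$.

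The core step is the blow-up of $\gamma$ at $x$: rescaling $d_T$ about $x$ and using that minimizing geodesics converge to minimizing geodesics, the two rescaled halves of $\gamma$ subtend angle $\pi$ at $x$ and extend to infinity, so they limit to a \emph{line} $\ell$ through the vertex of $C_xM_T$. By the Cheeger--Colding splitting theorem $C_xM_T$ splits isometrically as $\mathbb{R}^{k}\times C(Z)$ with $C(Z)$ lineless and $\ell$ lying in the Euclidean factor through the origin, so $k\ge1$; equivalently $\gamma$ is infinitesimally tangent at $x$ to the regular directions of the tangent cone. If $x\in\mathcal S'$, the lineless K\"{a}hler-cone structure and the codimension estimate are incompatible with such a splitting having $k\ge1$ at a point of $\mathcal S'$, which is the contradiction, essentially as in \cite{NTZ}. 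If $x\in D_T^{\mathrm{reg}}$, the tangential line $\ell$ forces $\gamma$ to run into $D_T$; here I would combine the fact that the $2$-dimensional cone $C_{2\pi\beta}$ of angle $2\pi\beta<2\pi$ admits no minimizing geodesic through its vertex as an interior point with a uniform first-variation/shortcut estimate along the approximants $\omega_{t,\epsilon}$ near $D$, producing a competitor path from $\gamma(s_0-\delta)$ to $\gamma(s_0+\delta)$ strictly shorter than $\gamma$, contradicting minimality. Finally, to pass from the chosen $x$ to the statement $F=\varnothing$, I would use the non-branching of geodesics and the H\"{o}lder continuity of tangent cones along $\gamma$ in non-collapsed Ricci limit spaces, so that the set of parameters with a prescribed splitting type is both open and closed in $(0,L)$.

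The main obstacle is the $D_T^{\mathrm{reg}}$ case. Unlike in \cite{NTZ}, the limit metric genuinely degenerates along $D$, so the shortcut estimate cannot be extracted from a two-sided Ricci bound near $D$ and must be proved directly and uniformly along the conical approximations $\omega_{t,\epsilon}$, controlling precisely how a minimizing geodesic between regular points may approach $D$. This --- together with verifying that the tangent cones along $D_T^{\mathrm{reg}}$ are exactly the product cones $\mathbb{C}^{n-1}\times C_{2\pi\beta}$, which is where the hypotheses $\beta\in\mathbb{Q}\cap(0,1)$ and the semi-positivity of $c_1(L_D)$ (through the smooth approximation) are used --- is the part that requires genuine work beyond citing \cite{NTZ}.
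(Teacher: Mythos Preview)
The paper's proof is a one-line citation of Colding--Naber \cite{CN}: since $(M_T,d_T)$ is a noncollapsed Gromov--Hausdorff limit of the smooth manifolds $(M,\omega_{t_i,\epsilon_i})$ with a uniform lower Ricci bound, the H\"{o}lder continuity of tangent cones along interior points of a minimizing geodesic applies verbatim. As $\mathcal{R}$ is already known to be open, the set $\gamma^{-1}(\mathcal{R})\cap(0,L)$ is open; Colding--Naber makes it closed in $(0,L)$; and it is nonempty because the endpoints lie in $\mathcal{R}$. That is the whole argument. The specific structure of $\mathcal{S}$ --- conical divisor, $\mathcal{S}_M$, or otherwise --- is irrelevant for this lemma, and neither $\beta\in\mathbb{Q}$ nor the semi-positivity of $c_1(L_D)$ is used here.

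Your proposal is much more elaborate and contains a genuine gap in the case analysis. Blowing up at an interior singular point $x$ produces a line in the tangent cone and hence a splitting $C_xM_T\cong\mathbb{R}^k\times C(Z)$ with $k\ge1$ --- but that is \emph{not} a contradiction with $x\in\mathcal S$. At a point of $D_T^{\mathrm{reg}}$ the tangent cone $\mathbb{C}^{n-1}\times C_{2\pi\beta}$ already splits off $\mathbb{R}^{2n-2}$, so the limit line $\ell$ can lie entirely in the Euclidean factor; the angle-deficit fact (``no minimizing geodesic through the vertex of $C_{2\pi\beta}$'') is then simply not engaged, and your proposed shortcut estimate along the approximants addresses the wrong direction. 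The same problem afflicts your $\mathcal S'$ case: nothing about a ``lineless K\"{a}hler cone'' $C(Z)$ forbids $k\ge1$ in $\mathbb{R}^k\times C(Z)$. You do invoke Colding--Naber at the very end, but you frame it as a device to propagate a pointwise conclusion you never actually obtained; in fact that invocation \emph{is} the entire proof, and everything preceding it is both unnecessary and, as written, incorrect.
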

\begin{proof}
It is simply a consequence of Colding-Naber¨s H\"{o}lder continuity of tangent cones along a geodesic in $M_T$ \cite{CN}. Actually, in \cite{ChCo2} one knows
that any pair of regular points can be connected by a curve consisting entirely of almost regular points and knows $\mathcal{R}$ is locally convex by previous argument. Therefore, the tangent cone of each point which is in a minimal geodesic connecting any pair of regular points is $\mathbb{R}^{2n}$.
\end{proof}

Let $D^{'}$ be any divisor such that $D \cup \mathcal{S}_{M} \subset D^{'}$. Define the Gromov-Hausdorff limit of $D^{'}$
$$
 D_{T}^{'}:=\{x\in M_T|there \ exists \ x_i\in D^{'} such \ that \ x_i\xrightarrow{d_{GH}} x\}.
$$
\begin{prop}
$(M_T,d_T)$ is isometric to $\overline{(M\backslash D^{'},d_{\omega_T})}$.
\end{prop}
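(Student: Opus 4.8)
The plan is to compare both sides of the claimed isometry with the incomplete length space $(\mathcal{R},d_{\overline{\omega_T}})$, whose metric completion is $(M_T,d_T)$ by the preceding lemma. First I would record the elementary half: since $D\cup\mathcal{S}_M\subseteq D'$, the open set $M\setminus D'$ is contained in $M\setminus(\mathcal{S}_M\cup D)$, where by Theorem 1.4 the limit $\omega_T$ is a genuine smooth (non\-degenerate) K\"ahler metric; thus $(M\setminus D',\omega_T)$ is an honest incomplete K\"ahler manifold and $d_{\omega_T}$ its induced length metric, finite because $M\setminus D'$ is connected (the divisor $D'$ has real codimension $2$). Moreover, by the convergence results of Section~3 together with the proposition that $\eta_T\leq C\omega_{t,\epsilon}$ and the corollary that $\omega_T$ is smooth on $M_{reg}\setminus D$, the metrics $\omega_{t_i,\epsilon_i}$ (equivalently $\omega_{t_i}$) converge in $C^\infty_{loc}$ to $\omega_T$ on $M\setminus(\mathcal{S}_M\cup D)$, so on this open set the Gromov--Hausdorff convergence is realized by an honest smooth convergence of non-degenerate metrics.

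Next I would build the identification. For each $x\in M\setminus(\mathcal{S}_M\cup D)$, the smooth convergence of $\omega_{t_i}$ near $x$ together with non-degeneracy of $\omega_T(x)$ shows that $x$ has a well-defined limit point $\Psi(x)\in M_T$, that a small $\omega_T$-ball around $x$ is isometric to a ball in $M_T$ around $\Psi(x)$, and hence that the tangent cone of $M_T$ at $\Psi(x)$ is $\mathbb{C}^n$, i.e.\ $\Psi(x)\in\mathcal{R}$. The same local statement makes $\Psi$ a local isometry from $(M\setminus(\mathcal{S}_M\cup D),\omega_T)$ into $(\mathcal{R},\overline{\omega_T})$ under which $\overline{\omega_T}$ is the push-forward of $\omega_T$; combined with the $C^{1,\alpha}$ global structure of $\mathcal{R}$ (from the lemma asserting $\mathcal{R}$ is open, with its approximating embeddings $\phi_i\colon K_i\to M$) and with $\mathrm{Codim}_{\mathbb{R}}(\mathcal{S})\geq 2$, the map $\Psi$ is injective, its image is open and dense in $M_T$, and $\mathcal{R}\setminus\Psi(M\setminus(\mathcal{S}_M\cup D))$ has real codimension $\geq 2$. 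Since $\mathcal{R}$ is geodesically convex in $M_T$ (preceding lemma) and $(\mathcal{R},d_T|_{\mathcal{R}})=(\mathcal{R},d_{\overline{\omega_T}})$, deleting this negligible set does not change the completion, so $\Psi$ extends to an isometry of metric completions $\overline{(M\setminus(\mathcal{S}_M\cup D),d_{\omega_T})}\cong(M_T,d_T)$.

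Finally I would remove the extra divisor. The difference $D'\setminus(\mathcal{S}_M\cup D)$ is a complex-analytic subset of the smooth K\"ahler manifold $(M\setminus(\mathcal{S}_M\cup D),\omega_T)$, hence a closed set of real codimension $\geq 2$; deleting such a set changes neither the induced length metric nor its metric completion, since any rectifiable curve through it can be perturbed off it with an arbitrarily small change of $\omega_T$-length. Therefore $\overline{(M\setminus D',d_{\omega_T})}=\overline{(M\setminus(\mathcal{S}_M\cup D),d_{\omega_T})}\cong(M_T,d_T)$, which is the claim; in particular the completion, and the limit space $D'_T$, depend on the choice of $D'\supseteq D\cup\mathcal{S}_M$ only through $D$ and $\mathcal{S}_M$.

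The main obstacle is the identification step of the second paragraph: one must show the Gromov--Hausdorff limit is \emph{faithful} on $M\setminus(\mathcal{S}_M\cup D)$, i.e.\ that it neither collapses nor identifies distinct points there and that $\mathcal{R}$ acquires no regular points beyond the image of $\Psi$ except along a codimension\--$\geq 2$ set. This is exactly where the noncollapsing hypothesis $([\omega_0]-T(c_1(M)-(1-\beta)c_1(L_D)))^n>0$, the uniform two-sided metric and gradient bounds of Section~3, and the Cheeger--Colding codimension estimate for $\mathcal{S}$ enter in an essential way; granted these, the remainder is soft point-set topology of length spaces.
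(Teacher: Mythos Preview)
Your overall architecture is close to the paper's, but the decisive technical step is only flagged, not supplied, and the ingredients you name for it are not the right ones. The paper proceeds by first invoking \cite{RZ} to identify $(M_T\setminus D'_T,\overline{\omega_T})$ isometrically with $(M\setminus D',\omega_T)$ as Riemannian manifolds (this absorbs the injectivity and local-isometry issues you treat informally), and then proves a concrete \emph{Claim}: $D'_T\setminus\mathcal{S}$, if nonempty, is a complex subvariety of dimension $n-1$ in $\mathcal{R}$. The proof of the Claim is the heart of the matter: around any $x\in D'_T\cap\mathcal{R}$ one has harmonic coordinates with $C^{-1}\omega_E\leq\omega_{t_i,\epsilon_i}\leq C\omega_E$; since the total volume $\int_{D'}\omega_{t_i,\epsilon_i}^{\,n-1}$ is uniformly bounded (a cohomological fact), the local analytic sets $D'\cap B_{\omega_{t_i,\epsilon_i}}(x_i,r)$ have uniformly bounded degree in these Euclidean charts and therefore subconverge to an analytic set $D'_T\cap B_{d_T}(x,r)$. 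Only then does one conclude $\dim_{\mathbb R}D'_T\leq 2n-2$ and hence, by the Cheeger--Colding length-space argument, that $d_{\overline{\omega_T}}=d_T$ on $M_T\setminus D'_T$.

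Your proposal replaces this with the assertion that ``$\mathcal{R}\setminus\Psi(M\setminus(\mathcal{S}_M\cup D))$ has real codimension $\geq 2$'' and attributes it to the noncollapsing hypothesis, the Section~3 estimates, and the Cheeger--Colding codimension bound for $\mathcal{S}$. None of these by themselves controls $D'_T\cap\mathcal{R}$: the Cheeger--Colding estimate concerns $\mathcal{S}$, not the Gromov--Hausdorff limit of a fixed divisor inside $\mathcal{R}$, and the Section~3 estimates do not prevent $D'$ from accumulating onto a set of positive $(2n-1)$-dimensional measure in $\mathcal{R}$. What is actually needed is the analytic-set convergence argument above (harmonic coordinates plus a uniform volume/degree bound on $D'$). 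Once you insert that, your reorganization through $M\setminus(\mathcal{S}_M\cup D)$ followed by excising $D'\setminus(\mathcal{S}_M\cup D)$ is fine and equivalent to the paper's direct route through $D'$; but as written, the crux is asserted rather than proved.
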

\begin{proof}
First, by the argument of \cite{RZ} one knows that $(M_T\backslash D_T^{'},\overline{\omega_T})$ is isometric to $(M\backslash D^{'},\omega_T)$; moreover $M_T\backslash D_T^{'}\subset \mathcal{R}$. We make the following
\begin{claim}
$D_T^{'}\backslash \mathcal{S}$ is a subvariety of dimension $(n-1)$ if it is not empty.
\end{claim}
\begin{proof}
Let $x\in D_T^{'}\backslash \mathcal{S}$ and $x_i\in D^{'}$ such that $ x_i\xrightarrow{d_{GH}} x$. By the $C^{1,\alpha}$ convergence of $\omega_{t_i,\epsilon_i}$ around $x$, there are $C, r>0$ independent of $i$ and a sequence of harmonic coordinates in $B_{\omega_{t_i,\epsilon_i}}(x_i,r)$
such that $C^{-1}\omega_E\leq \omega_{t_i,\epsilon_i}\leq C\omega_E$ where $\omega_E$ is the Euclidean metric in the coordinates. Since the total volume of
$D^{'}$ is uniformly bounded for any $\omega_{t_i,\epsilon_i}$, the local analytic $D^{'}\cap B_{\omega_{t_i,\epsilon_i}}(x_i,r)$ have a uniform bound of degree and so converge to an analytic set $D_T^{'} \cap B_{d_T}(x,r)$.
\end{proof}
From the above Claim we know that $dim(D_T^{'})=dim(\mathcal{S}\cup (D_T^{'}\backslash \mathcal{S}))\leq 2n-2$. Thus by the argument of \cite{ChCo2}, one can show that the length metric $d_{\overline{\omega_T}}$ on $M_T\backslash D_T^{'}$ is the same as $d_T$. Therefore
$$
 (M_T,d_T)=\overline{(M_T\backslash D_T^{'},d_{\overline{\omega_T}})}=\overline{(M\backslash D^{'},d_{\omega_T})}.
$$
\end{proof}
Combining with Proposition(2.8), a direct corollary is
\begin{cor}
$(M,\omega_t, x_0)$ converges globally to $(M_T,d_T, x_T)$ under the Gromov-Hausdorff topology as $t\rightarrow T$.
\end{cor}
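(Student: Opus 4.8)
The plan is to deduce the Corollary from the preceding Proposition together with the uniqueness of the limit metric $\omega_T$. The essential point is that \emph{any} pointed Gromov--Hausdorff limit extracted from the family $(M,\omega_{t,\epsilon},x_0)$ (or from $(M,\omega_t,x_0)$) as $(t,\epsilon)\to(T,0)$ is, by the preceding Proposition, isometric to $\overline{(M\setminus D',d_{\omega_T})}$, and $\omega_T$ is uniquely determined by Lemma (3.5) (equivalently Theorem (1.4)), independently of the chosen subsequence. Thus the limit space $(M_T,d_T,x_T)$ does not depend on the sequence $(t_i,\epsilon_i)$, and so the whole family converges; what remains is only to organise the standard compactness and diagonal arguments.

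First I would transfer the convergence from the smooth approximations $\omega_{t,\epsilon}$ to the conical metrics $\omega_t$. Fix an arbitrary sequence $t_j\to T$ in $[T-\bar{t},T)$. For each $j$, Proposition (2.8) gives $(M,\omega_{t_j,\epsilon})\xrightarrow{d_{GH}}(M,\omega_{t_j})$ as $\epsilon\to 0$, so one may choose $\epsilon_j\searrow 0$ so that the pointed Gromov--Hausdorff distance between $(M,\omega_{t_j},x_0)$ and $(M,\omega_{t_j,\epsilon_j},x_0)$, measured on balls of radius $j$, is at most $1/j$. The uniform lower Ricci bound $Ric(\omega_{t,\epsilon})\geq -\frac{1}{T-\bar{t}}\omega_{t,\epsilon}$, valid for all $t\in[T-\bar{t},T)$ and $\epsilon\geq 0$, lets Gromov's precompactness theorem extract a subsequence (still indexed by $j$) along which $(M,\omega_{t_j,\epsilon_j},x_0)$ converges in the pointed Gromov--Hausdorff topology; by the preceding Proposition its limit is $\overline{(M\setminus D',d_{\omega_T})}$, which by Lemma (3.5) is exactly $(M_T,d_T,x_T)$. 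Since the approximation error $1/j\to 0$, the same subsequence of $(M,\omega_{t_j},x_0)$ converges to $(M_T,d_T,x_T)$ as well.

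To upgrade this to convergence of the full family I would run the usual subsequence principle: given any sequence $t_j\to T$, the step above produces a sub-subsequence along which $(M,\omega_{t_j},x_0)\xrightarrow{d_{GH}}(M_T,d_T,x_T)$; since the limit space is always the same — it is the metric completion of $(M\setminus D',d_{\omega_T})$, which is independent of all choices — it follows that $(M,\omega_t,x_0)\xrightarrow{d_{GH}}(M_T,d_T,x_T)$ as $t\to T$ without passing to a subsequence.

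The main obstacle is not contained in any single estimate: all the substantive content (the lower Ricci bound, Colding's volume convergence, the openness, geodesic convexity and codimension $\geq 2$ of $\mathcal{R}$, and above all the identification $(M_T,d_T)\cong\overline{(M\setminus D',d_{\omega_T})}$ together with the uniqueness of $\omega_T$) is already available from the preceding results, which is why the statement is a direct corollary. The only points requiring care are the bookkeeping ones: making sure the diagonal choice $\epsilon_j\to 0$ is compatible with the pointed Gromov--Hausdorff topology on every ball $B_{\omega_{t_j}}(x_0,R)$, and checking that the limit extracted from the approximations genuinely coincides, as a \emph{pointed} metric space, with the one appearing in the preceding Proposition, so that subsequential convergence can be promoted to convergence of the entire family.
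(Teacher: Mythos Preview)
Your proposal is correct and follows essentially the same approach as the paper: the paper merely states ``Combining with Proposition (2.8), a direct corollary is'' without further argument, and your write-up is precisely the natural unpacking of that remark --- use Proposition (2.8) to pass from $\omega_{t,\epsilon}$ to $\omega_t$, use the preceding Proposition's identification $(M_T,d_T)\cong\overline{(M\setminus D',d_{\omega_T})}$ together with the uniqueness of $\omega_T$ to see that every subsequential limit coincides, and then invoke the standard subsequence principle.
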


Let $M_{sing}$ be the subvariety of critical points of $\Phi$ which is defined in section $3$ and $M_{reg}=M\backslash M_{sing}$. We have
shown that $\omega_T$ is a smooth metric on $M_{reg}\backslash D$. Another corollary is
\begin{cor}
$(M_T,d_T)$ is isometric to $\overline{(M_{reg}\backslash D,d_{\omega_T})}$.
\end{cor}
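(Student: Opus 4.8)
The plan is to deduce the isometry $(M_T,d_T)\cong\overline{(M_{reg}\setminus D,\,d_{\omega_T})}$ from Proposition 4.8 by choosing the auxiliary divisor $D'$ appropriately. First I would recall that Proposition 4.8 applies to \emph{any} divisor $D'$ with $D\cup\mathcal{S}_M\subset D'$, giving $(M_T,d_T)=\overline{(M\setminus D',\,d_{\omega_T})}$. So the first step is to exhibit a divisor $D'$ with $D\cup\mathcal{S}_M\subset D'$ and $M_{sing}\subset D'$: since $M_{sing}$ is the subvariety of critical points of the base-point-free morphism $\Phi$ defined in Section 3, and $\mathcal{S}_M$ and $D$ are already (unions of) divisors, one can take $D'$ to be the union of $D$, of the divisorial part of $M_{sing}$, and of $\mathcal{S}_M$, adding hyperplane sections through any higher-codimension components of $M_{sing}$ if necessary so that $D'$ is a genuine divisor containing $M_{sing}\cup D\cup\mathcal{S}_M$. (Alternatively one absorbs everything into a single very ample divisor, exactly as is done implicitly in the proof of Corollary 4.9.)

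The second step is to compare the two completions $\overline{(M\setminus D',\,d_{\omega_T})}$ and $\overline{(M_{reg}\setminus D,\,d_{\omega_T})}$. We have $M\setminus D'\subset M_{reg}\setminus D$, so the former is contained in the latter, and the extra locus $(M_{reg}\setminus D)\setminus(M\setminus D')$ is contained in $D'\setminus(D\cup M_{sing})$, which by construction is a proper analytic subset of $M_{reg}\setminus D$ of complex codimension $\ge 1$. Since $\omega_T$ is smooth on $M_{reg}\setminus D$ by Corollary 3.8, removing a complex-codimension-$\ge 1$ (hence real-codimension-$\ge 2$) analytic subset does not change the induced length metric: any two points of $M\setminus D'$ can be joined by a path in $M_{reg}\setminus D$ whose $\omega_T$-length is arbitrarily close to their $d_{\omega_T}$-distance, and such a path can be perturbed off the codimension-$\ge 2$ subset without increasing its length by more than $\varepsilon$. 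This is the same codimension-$2$ avoidance argument used in the proof of Proposition 4.8 (following \cite{ChCo2}) when passing from $M_T\setminus D_T'$ to $M_T$. Hence the two length spaces $M\setminus D'$ and $M_{reg}\setminus D$ with metric $d_{\omega_T}$ have the same metric completion.

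Combining the two steps, $(M_T,d_T)=\overline{(M\setminus D',\,d_{\omega_T})}=\overline{(M_{reg}\setminus D,\,d_{\omega_T})}$, which is the assertion. The main obstacle is the second step: one must be sure that the ``defect locus'' $D'\setminus(D\cup M_{sing})$ is \emph{not} metrically relevant, i.e. that it has real codimension at least $2$ in $(M_{reg}\setminus D,\omega_T)$ and does not disconnect or create shortcuts. Codimension $\ge 2$ is automatic since it is a proper analytic subset of a smooth complex manifold of dimension $n$; the ``no shortcut'' part is precisely the density/length-metric stability statement already invoked for $D_T'$ in Proposition 4.8, so in fact no new analysis is needed beyond quoting that argument. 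I would also remark that one could bypass $D'$ entirely and instead run the proof of Proposition 4.8 verbatim with $M_{sing}\cup D$ in place of $D'$, observing that $M_{sing}\cup D$ contains $\mathcal{S}_M\cup D$ (because $\mathcal{S}_M\subset M_{sing}$, as $\eta_T$ degenerates exactly along the $\Phi$-exceptional locus), which makes the corollary immediate; but the two-divisor comparison above is the cleanest route given what has been proved.
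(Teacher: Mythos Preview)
Your proposal is correct and follows essentially the same route as the paper: choose a divisor $D'\supset D\cup\mathcal{S}_M\cup M_{sing}$, apply Proposition~4.8 to get $(M_T,d_T)=\overline{(M\setminus D',d_{\omega_T})}$, and then observe that $(M_{reg}\setminus D)\setminus(M\setminus D')=(M_{reg}\setminus D)\cap D'$ has real codimension $\geq 2$ in the smooth Riemannian manifold $(M_{reg}\setminus D,\omega_T)$, so the two length metrics agree and the completions coincide. Your side remark that $\mathcal{S}_M\subset M_{sing}$ is plausible but not explicitly proved in the paper, so the main two-step argument you give (which avoids this claim) is the safer one---and it is exactly what the paper does.
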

\begin{proof}
We choose a divisor $D^{'}$ such that $D^{'}\supset (D\cup \mathcal{S}_M\cup M_{sing})$, then $M\backslash D^{'}\subset M_{reg}\backslash D$.
Notice that $(M_{reg}\backslash D)\backslash  (M\backslash D^{'})=(M_{reg}\backslash D)\cap D^{'}$ has real codimension larger than $2$ in
$(M_{reg}\backslash D,\omega_T)$. Thus the length metric $d_{\omega_T}$ on $M\backslash D^{'}$ equals to the restricted extrinsic metric from
$(M_{reg}\backslash D,\omega_T)$. Since $M\backslash D^{'}$ is dense in $M_{reg}\backslash D$, we conclude
$$
 (M_T,d_T)=\overline{(M\backslash D^{'},d_{\omega_T})}=\overline{(M_{reg}\backslash D,d_{\omega_T})}.
$$
\end{proof}
\begin{lem}
The identity map $\id: \ M_{reg}\backslash D\rightarrow M$ gives a Gromov-Hausdorff approximation representing the convergence $(M,\omega_t, x_0)\rightarrow (M_T,d_T,x_T)$ as $t\rightarrow T$.
\end{lem}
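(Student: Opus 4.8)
The plan is to deduce this from two facts already in place --- the isometry $(M_T,d_T)\cong\overline{(M_{reg}\setminus D,d_{\omega_T})}$ of the preceding Corollary, and the global Gromov--Hausdorff convergence $(M,\omega_t,x_0)\to(M_T,d_T,x_T)$ established above --- together with the local smooth convergence $\omega_t\to\omega_T$ on $M_{reg}\setminus D$, which I establish below. The first of these lets us regard $M_{reg}\setminus D$ as a dense open subset of $M_T$ on which $d_T$ restricts to the length metric $d_{\omega_T}$, and under which $x_T$ corresponds to $x_0$ (we take $x_0\in M_{reg}\setminus D$, which is permissible since that set is dense). Given all this, the claim reduces, along an exhaustion of $M_{reg}\setminus D$ by compact sets $K_j$, to: \emph{(a)} for $x,y\in M_{reg}\setminus D$ one has $d_{\omega_t}(x,y)\to d_T(x,y)$ as $t\to T$, uniformly for $(x,y)\in K_j\times K_j$; and \emph{(b)} $M_{reg}\setminus D$ is dense in $(M,d_{\omega_t})$ for every $t<T$. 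Assertion (b) is immediate: $\mathcal{S}_M\cup M_{sing}\cup D$ is a proper analytic subset, and $(M,d_{\omega_t})$ --- the metric completion of $(M\setminus D,\omega_t)$ for the conical metric $\omega_t$ --- is homeomorphic to $M$, so $M_{reg}\setminus D$ is dense in it; it is dense in $M_T$ by the preceding Corollary.

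Next I would record the local convergence $\omega_t\to\omega_T$ in $C^\infty_{\mathrm{loc}}(M_{reg}\setminus D)$ as $t\to T$. On a compact $K\subset M_{reg}\setminus D$ the background form $\eta_T$ is a smooth nondegenerate K\"{a}hler metric, so combining the uniform bound $\eta_T\le C\omega_{t,\epsilon}$ and the volume estimate $C^{-1}\Omega\le\omega_{t,\epsilon}^n\le C\Omega|s_D|_{h_D}^{-2(1-\beta)}$ proved above with the elementary inequality between traces gives $C_K^{-1}\eta_T\le\omega_{t,\epsilon}\le C_K\eta_T$ on $K$, uniformly in $t$ and $\epsilon$; the bootstrap used in the proof that $\omega_T$ is smooth on $M_{reg}\setminus D$ then yields $|u_{t,\epsilon}|_{C^l(K)}\le C_{l,K}$, and letting $\epsilon\to0$, then $t\to T$, and using the uniform convergence $u_t\to u_T$ and the Arzel\`a--Ascoli theorem, gives the claim. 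In particular each point of $M_{reg}\setminus D$ has uniformly bounded geometry and, by the noncollapsing hypothesis, a uniform local volume lower bound, so $M_{reg}\setminus D\subset\mathcal{R}$ and there $\omega_T$ agrees with the $C^{1,\alpha}$ limit $\overline{\omega_T}$ of the lemma asserting that $\mathcal{R}$ is open.

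Granting this, one half of (a) is a short path estimate: for $\eta>0$ and $x,y\in M_{reg}\setminus D$, since $d_T(x,y)=d_{\omega_T}(x,y)$ is the length metric of $(M_{reg}\setminus D,\omega_T)$ there is a smooth path $\gamma\subset M_{reg}\setminus D$ from $x$ to $y$ with $L_{\omega_T}(\gamma)<d_T(x,y)+\eta$; as $\gamma$ lies in a compact subset of $M_{reg}\setminus D$, $L_{\omega_t}(\gamma)\to L_{\omega_T}(\gamma)$, whence $\limsup_{t\to T}d_{\omega_t}(x,y)\le d_T(x,y)$, with uniformity on $K_j\times K_j$ since the competitor paths can be chosen from a fixed compact family. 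The reverse inequality is the main obstacle. Here I would proceed exactly as in the proofs above that $\mathcal{R}$ is open and that $(M_T,d_T)=\overline{(\mathcal{R},d_{\overline{\omega_T}})}$: by Cheeger--Colding theory together with Anderson's harmonic-radius estimate, the Gromov--Hausdorff convergence restricted to an exhaustion of $\mathcal{R}$ by compacta is realized by the $C^{1,\alpha}$ convergence of the metrics --- this is the content of the fact $\phi_i^*\omega_{t_i,\epsilon_i}\xrightarrow{C^{1,\alpha}}\overline{\omega_T}$ recorded there --- so on any compact $K\subset M_{reg}\setminus D$ the approximation maps can be taken to converge, in $C^{1,\alpha}$, to the inclusion $K\hookrightarrow M$, which by the local convergence of the previous paragraph is $\id|_K$. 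Equivalently: if $d_{\omega_{t_i}}(x,y)\to\ell$ along a subsequence, then $x$ and $y$ converge in the Gromov--Hausdorff sense to points $x_\infty,y_\infty\in M_T$ with $d_T(x_\infty,y_\infty)=\ell$, and the $C^{1,\alpha}$ convergence of $\omega_{t_i}$ in fixed balls about $x$ and $y$ forces $x_\infty,y_\infty$ to be the images of $x,y$ under the embedding of the preceding Corollary; hence $\ell=d_T(x,y)$, and as this holds for every convergent subsequence, $d_{\omega_t}(x,y)\to d_T(x,y)$. Uniformity on $K_j\times K_j$ follows from the uniform harmonic-radius and metric-comparison bounds there.

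Finally I would assemble the pieces: by (a), $\id\colon(K_j,d_T)\to(M,d_{\omega_t})$ is an $\varepsilon$-isometry for $t$ close enough to $T$ (depending on $j$ and $\varepsilon$), and by (b) its image is $\varepsilon$-dense in the corresponding ball of $(M,d_{\omega_t})$; letting $j\to\infty$ and $\varepsilon\to0$ as $t\to T$ exhibits $\id\colon M_{reg}\setminus D\to M$ as a Gromov--Hausdorff approximation realizing $(M,\omega_t,x_0)\to(M_T,d_T,x_T)$. The only genuinely delicate point is the lower bound in (a) --- that the abstract Gromov--Hausdorff limit of a point of $M_{reg}\setminus D$ is the point itself --- which rests on the same Cheeger--Colding and Anderson input already invoked in the earlier lemmas of this subsection.
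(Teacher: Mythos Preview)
Your proof is correct and rests on the same two pillars the paper cites --- the metric-completion identification $(M_T,d_T)\cong\overline{(M_{reg}\setminus D,d_{\omega_T})}$ and the global Gromov--Hausdorff convergence --- but the paper's own proof is extremely terse (three sentences): it simply observes that $(M\setminus D',d_T)=(M\setminus D',d_{\omega_T})$ is dense in $(M_T,d_T)$ and declares the identity a Gromov--Hausdorff approximation. You supply the content the paper leaves implicit, namely the pointwise distance convergence $d_{\omega_t}(x,y)\to d_T(x,y)$ on compacta, via an upper bound from competitor curves and a lower bound from the Cheeger--Colding/Anderson input already invoked in Lemmas~4.5--4.6.

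One structural difference worth noting: you establish the local smooth convergence $\omega_t\to\omega_T$ on $M_{reg}\setminus D$ directly from the Section~3 estimates ($\eta_T\le C\omega_{t,\epsilon}$ together with the volume bound and the trace inequality), whereas the paper defers this to Proposition~4.13(1) --- whose proof in turn \emph{invokes} the present lemma (``since the identity map represents the Gromov--Hausdorff convergence''). Your ordering is logically cleaner and sidesteps that near-circularity; the paper presumably intends the lemma to follow from density alone plus the Rong--Zhang identification used in Proposition~4.8, but your explicit distance-convergence argument makes the mechanism transparent.
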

\begin{proof}
First we observe that $(M\backslash D^{'},d_T)=(M\backslash D^{'},d_{\omega_T})$ and $(M\backslash D^{'},d_T)$ is dense in$(M_{reg}\backslash D,d_T)$.
Thus $\id: \ (M\backslash D^{'}, d_{\omega_T})\rightarrow (M,\omega_t)$ defines a Gromov-Hausdorff approximation because $(M\backslash D^{'}, d_{\omega_T})$
is dense in $(M_T,d_T)$.
\end{proof}

Therefore, the identity map $\id$ extends to an isometry
$$
 \overline{\id}: \ \overline{(M_{reg}\backslash D,d_{\omega_T})}\rightarrow (M_T,d_T).
$$
Since $\omega_T$ is smooth on $M_{reg}\backslash D$, one sees that $M_{reg}\backslash D\subset \mathcal{R}$.
\begin{prop}
\begin{enumerate}
\item  $\omega_{t,\epsilon}$ converges smoothly to $\omega_T$ on $M_{reg}\backslash D$ as $t\rightarrow T$ and $\epsilon \rightarrow 0$. \\
\item  $\overline{\id}(M_{reg}\backslash D)=\mathcal{R}$, the regular set of $M_T$.
\end{enumerate}
\end{prop}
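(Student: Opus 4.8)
The plan has three stages: first, upgrade the a priori estimates of Section~3 to uniform \emph{higher order} interior estimates on compacta of $M_{reg}\backslash D$ and pass to the limit to prove (1); second, deduce the inclusion $\overline{\id}(M_{reg}\backslash D)\subseteq\mathcal{R}$ from (1) together with volume convergence; third, prove the reverse inclusion by analysing, through the isometry $\overline{\id}$, the points of $M_T$ lying over $D\cup M_{sing}$.

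\emph{Stage 1.} Fix a compact $K\subset M_{reg}\backslash D$ and a slightly larger compact $K'\subset M_{reg}\backslash D$ containing $K$ in its interior. We already have $C_K^{-1}\eta_T\le\omega_T\le C_K\eta_T$ on $K$ and $\eta_T\le C\omega_{t,\epsilon}$ on $M$; the missing ingredient is a \emph{uniform} upper bound $\omega_{t,\epsilon}\le C_K\eta_T$ on $K$. I would obtain it from a localized Aubin--Yau second order estimate for the Monge--Amp\`ere equation satisfied by $u_{t,\epsilon}$: on $K'$ the reference metric $\eta_T$ is smooth with bounded bisectional curvature, $Ric(\omega_{t,\epsilon})\ge-\frac{1}{t}\omega_{t,\epsilon}$, and the $\epsilon$-curvature terms in $Ric(\omega_{t,\epsilon})$ are $O(\epsilon^2)$ on $K'$, so Yau's computation gives $\triangle_{\omega_{t,\epsilon}}\log tr_{\eta_T}\omega_{t,\epsilon}\ge-C_{K'}-C_{K'}tr_{\omega_{t,\epsilon}}\eta_T$ on $K'$. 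Combining this with $\triangle_{\omega_{t,\epsilon}}u_{t,\epsilon}=n-tr_{\omega_{t,\epsilon}}\eta_t\le n-\delta\,tr_{\omega_{t,\epsilon}}\eta_T$ (using $\eta_t\ge\delta\eta_T$) and with the uniform $C^0$-bound for $u_{t,\epsilon}$, the maximum principle applied to $\rho^2 e^{-A u_{t,\epsilon}}tr_{\eta_T}\omega_{t,\epsilon}$ for a cut-off $\rho$ supported in $K'$ and $A$ large yields $tr_{\eta_T}\omega_{t,\epsilon}\le C_K$ on $K$, uniformly in $(t,\epsilon)$. Hence $C_K^{-1}\eta_T\le\omega_{t,\epsilon}\le C_K\eta_T$ on $K$ uniformly; the Monge--Amp\`ere equation is then uniformly elliptic on $K$ with right hand side bounded above and below, so the Evans--Krylov estimate and Schauder bootstrapping give uniform $C^{l,\alpha}(K)$ bounds for $u_{t,\epsilon}$, for every $l$. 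By Arzel\`a--Ascoli, along any $(t_i,\epsilon_i)\to(T,0)$ a subsequence of $u_{t_i,\epsilon_i}$ converges in $C^{\infty}_{loc}(M_{reg}\backslash D)$ to a bounded $\eta_T$-plurisubharmonic function solving the limit Monge--Amp\`ere equation; by uniqueness of such solutions (consistent with the uniform convergence $u_t\to u_T$ already established) this limit is $u_T$. As every subsequential limit equals $u_T$, the whole family converges, $\omega_{t,\epsilon}\to\omega_T$ in $C^{\infty}_{loc}(M_{reg}\backslash D)$; comparing with the $C^{1,\alpha}$ convergence of $\omega_{t_i,\epsilon_i}$ to $\overline{\omega_T}$ on $\mathcal{R}$, we also get $\overline{\omega_T}=\omega_T$ on $M_{reg}\backslash D$.

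\emph{Stage 2, the inclusion $\subseteq$.} For $y\in M_{reg}\backslash D$, $\omega_T$ is smooth near $y$ and $\omega_{t_i,\epsilon_i}\to\omega_T$ smoothly near $y$ by Stage~1, so along the Gromov-Hausdorff approximation furnished by $\id$ the volume ratios of small metric balls centred near $y$ tend to $1$; by Colding's volume convergence theorem \cite{Co} the tangent cone of $(M_T,d_T)$ at $\overline{\id}(y)$ is $\mathbb{R}^{2n}$, i.e. $\overline{\id}(y)\in\mathcal{R}$.

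\emph{Stage 3, the inclusion $\supseteq$, and the main obstacle.} Since $\overline{\id}:\overline{(M_{reg}\backslash D,d_{\omega_T})}\to(M_T,d_T)$ is a bijective isometry, given $x\in\mathcal{R}$ write $x=\overline{\id}(\xi)$ and suppose $\xi$ lies in the metric completion boundary, i.e. $\xi=\lim_k y_k$ for a $d_{\omega_T}$-Cauchy sequence $y_k\in M_{reg}\backslash D$ with no $d_{\omega_T}$-limit in $M_{reg}\backslash D$. After passing to a subsequence, $y_k\to p$ in the compact manifold $M$; if $p\in M_{reg}\backslash D$ then $\omega_T$ is smooth near $p$, so $d_{\omega_T}$ is locally bi-Lipschitz to the ambient distance and $\xi=p$, a contradiction. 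Hence $p\in D\cup M_{sing}$. If $p\in D$: the ambient distance of $y_k$ to $D$ tends to $0$, and since each $\omega_{t_i}$ is conical of cone angle $2\pi\beta$ along $D$, the $\omega_{t_i}$-distance of the approximating points of $x$ to $D$ tends to $0$ as well; arguing as in the proof of the openness of $\mathcal{R}$, Bishop--Gromov comparison against the $2\pi\beta$-cone forces $Vol(B_{d_T}(x,r))\le(1+\delta)\beta\,Vol(B^0_r)<(1-\delta)Vol(B^0_r)$ for a fixed small $r$, so the tangent cone at $x$ has density $<1$ and $x\notin\mathcal{R}$, a contradiction. Therefore $p\in M_{sing}\backslash D$, i.e. $x$ is a Gromov-Hausdorff limit of points of the exceptional locus of $\Phi$. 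What remains is to rule this out, equivalently to show $D_T'\subseteq\mathcal{S}$ for a divisor $D'\supseteq D\cup\mathcal{S}_M\cup M_{sing}$, so that the set $D_T'\backslash\mathcal{S}$, which is a subvariety of complex dimension $n-1$ by the earlier Claim, is in fact empty. I expect this to be the main obstacle: one has no uniform control of $\omega_T$ near $M_{sing}$, and one must show that the collapse of a positive-dimensional fibre of $\Phi$, together with the identity $\omega_T=\omega_0-T\,Ric(\omega_T)$ which concentrates curvature along such a fibre, produces a tangent cone of density strictly less than $1$ at its image. Granting this, $\xi\in M_{reg}\backslash D$, so $x\in\overline{\id}(M_{reg}\backslash D)$; with Stage~2 this gives $\overline{\id}(M_{reg}\backslash D)=\mathcal{R}$, and (2) follows.
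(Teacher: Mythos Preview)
Your Stage~3 leaves the decisive case unresolved. You correctly reduce to ruling out a Gromov--Hausdorff limit point $x\in\mathcal{R}$ arising from $p_i\in M_{sing}\backslash D$, but then concede this as ``the main obstacle'' and only speculate about tangent-cone density. The paper closes this gap with a clean volume argument that your speculation does not reach: since $x\in\mathcal{R}$, the $C^{1,\alpha}$ convergence on $\mathcal{R}$ gives harmonic coordinates on $B_{\omega_{t_i,\epsilon_i}}(p_i,r)$ with $C^{-1}\omega_E\le\omega_{t_i,\epsilon_i}\le C\omega_E$, so the $2m$-dimensional volume of $(M_{sing}\backslash D)\cap B_{\omega_{t_i,\epsilon_i}}(p_i,r)$ (where $m=\dim_{\mathbb C}M_{sing}$) has a uniform \emph{positive lower bound} by Lelong-type monotonicity. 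On the other hand, cohomologically
\[
\int_{M_{sing}\backslash D}\omega_{t_i,\epsilon_i}^{\,m}=[\eta_{t_i}]^m\cdot[M_{sing}]=\Bigl(\tfrac{T-t_i}{T}\Bigr)^{m}\int_{M_{sing}\backslash D}\omega_0^{\,m}\longrightarrow 0,
\]
because $\eta_T=\tfrac{1}{l_0}\Phi^*\omega_{FS}$ is degenerate along the critical locus of $\Phi$. These two facts are incompatible, which rules out the case. No tangent-cone density analysis is needed.

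Your Stage~1 would likely go through, but it is more laborious than the paper's route and ignores machinery already in place. The paper does not run a localized Aubin--Yau estimate with a cut-off; instead it uses that $M_{reg}\backslash D\subset\mathcal{R}$ is \emph{already known} (from the smoothness of $\omega_T$ there) together with Colding volume convergence and Lemma~4.4 to obtain a uniform Ricci \emph{upper} bound $Ric(\omega_{t,\epsilon})\le C\omega_{t,\epsilon}$ on $K$. Feeding this into the structural identity
\[
\omega_{t,\epsilon}=\omega_0-t\,Ric(\omega_{t,\epsilon})+\text{(nonnegative }\epsilon\text{-terms)}
\]
immediately yields $\omega_{t,\epsilon}\ge C^{-1}\omega_0$ on $K$; the matching upper bound then follows from $tr_{\omega_0}\omega_{t,\epsilon}\le(tr_{\omega_{t,\epsilon}}\omega_0)^{n-1}\,\omega_{t,\epsilon}^n/\omega_0^n$ and the uniform volume-form bound. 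This avoids the cut-off maximum-principle entirely. Your Stage~2 is fine but redundant: the inclusion $M_{reg}\backslash D\subset\mathcal{R}$ was established before the proposition and is in fact the starting point of the paper's proof of (1).
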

\begin{proof}
(1) For any compact subset $K\subset M_{reg}\backslash D\subset \mathcal{R}$, there exists $r=r_K>0$ such that $Vol(B_{d_T}(x,r))\geq (1-\frac{\delta}{2})Vol(B^0_{r})$ for any $x\in K$. where $\delta$ is the constant in Lemma (4.4). Then, since the identity map represents the
Gromov-Hausdorff convergence, we have $Vol(B_{\omega_{t,\epsilon}}(x,r))\geq (1-\delta) Vol(B^0_{r})$ for any $x\in K$, $t$ sufficiently close to $T$ and
$\epsilon$ sufficiently close to $0$. By Lemma(4.4), the Ricci curvature $Ric(\omega_{t,\epsilon})\leq C \omega_{t,\epsilon}$ uniformly on $K$ for some constant
$C=C(K)$. Since
$$
\omega_{t,\epsilon}= \omega_0-tRic(\omega_{t,\epsilon})+(1-\beta)\frac{\epsilon^2\langle\nabla s,\overline{\nabla s}\rangle}{(\epsilon^2+|s_D|^2_{h_D})^2}+(1-\beta)\frac{\epsilon^2}{\epsilon^2+|s_D|^2_{h_D}}\Theta_{h_D},
$$
one sees that $\omega_{t,\epsilon}\geq C^{-1}\omega_0$. Notice that
$$
  tr_{\omega_0}\omega_{t,\epsilon}\leq (tr_{\omega_{t,\epsilon}}\omega_{0})^{n-1}\frac{(\omega_{t,\epsilon})^n}{\omega_0^{n}}.
$$
Together with the uniform $L^{\infty}$ bound of $u_{t,\epsilon}$, one gets
$$
  C^{-1}\omega_0\leq \omega_{t,\epsilon}\leq C\omega_0, \ on \ K.
$$
Then by a standard bootstrap argument, we prove that $\omega_{t,\epsilon}$ converges smoothly to $\omega_T$ on $K$. \\

(2) We only to prove $M_{reg}\backslash D\supset \mathcal{R}$. We argue by contradiction. Suppose there is a point $p\in \mathcal{R}\backslash (M_{reg}\backslash D)$, then there exists a family of points $p_i\in M_{sing}\cup D$ such that $p_i\xrightarrow{d_{GH}} p$. We will divide the discussion into two parts.

On one hand, if there exists $p_i\xrightarrow{d_{GH}} p$ for each $p_i\in D$, that is a contradiction by Lemma (4.5).

On the other hand, there exists $p_i\xrightarrow{d_{GH}} p$ for each  $p_i\in M_{sing}\backslash D$. By $C^{1,\alpha}$ convergence on $\mathcal{R}$, there exist
$C,r>0$ independent of $t$ and $\epsilon$ and a sequence of harmonic coordinates on $B_{\omega_{t_i,\epsilon_i}}(p_i,r)$ such that $C^{-1}\omega_E\leq \omega_{t_i,\epsilon_i}\leq C\omega_E$ where $\omega_E$ is the Euclidean metric in this coordinate. Denote $m=dim_{\mathbb{C}}M_{sing}$. Then
$$
 Vol_{\omega_{t_i,\epsilon_i}}((M_{sing}\backslash D)\cap B_{\omega_{t_i,\epsilon_i}}(p_i,r))=\int_{(M_{sing}\backslash D)\cap B_{\omega_{t_i,\epsilon_i}}(p_i,r)}\omega_{t_i,\epsilon_i}^{m}\geq \int_{(M_{sing}\backslash D)\cap B_{\omega_E}(C^{-\frac{1}{2}})}(C^{-1}\omega_E)^m
$$
which has a uniform lower bound $C^{-2m}c(m)r^{2m}$ where $c(m)$ is the volume of unit sphere in $\mathbb{C}^m$. However, this contradicts with the
degeneration of the limit metric $\eta_T$ along $M_{sing}$:
$$
Vol_{\omega_{t_i,\epsilon_i}}((M_{sing}\backslash D)\cap B_{\omega_{t_i,\epsilon_i}}(p_i,r))\leq Vol_{\omega_{t_i,\epsilon_i}}(M_{sing}\backslash D)=
\int_{M_{sing}\backslash D}\omega_{t_i,\epsilon_i}^{n}=\left(\frac{T-t_i}{T} \right)^{m}\int_{M_{sing}\backslash D}\omega_0^{m}
$$
which tends to $0$ as $t_i\rightarrow T$. So we have $M_{reg}\backslash D\supset \mathcal{R}$.
\end{proof}

\subsection{$L^{\infty}$estimate to holomorphic sections}
Let $L=l_0(L^{'}+TK_{M}+T(1-\beta)L_{D})$ be the limit line bundle. Choose a Hermitian metric $h_{L^{'}}$ on $L^{'}$ whose curvature form $\Theta_h=\omega_0$ and put $h_{t,\epsilon}=h^{l_0}_{L^{'}}\otimes (\omega_{t,\epsilon}^{-n})^{l_0T}\otimes h_{D}^{l_0T(1-\beta)}\cdot e^{-l_0T(1-\beta)\log(\epsilon^2+|s_D|^2_{h_D})}$, a family of Hermitian metric on $L$ for any $t\in [T-\bar{t},T)$. The curvature form of $h_{t,\epsilon}$ is
$$
 \Theta_{h_{t,\epsilon}}=l_0\frac{T}{t}\omega_{t,\epsilon}-l_0\frac{T-t}{t}\omega_0\leq l_0\frac{T}{t}\omega_{t,\epsilon} .
$$
So, by the Bochner formula (4.2) we have
$$
\triangle_{\omega_{t,\epsilon}}|\tau|^2_{h_{t,\epsilon}^k}=|\nabla \tau|^{2}_{h_{t,\epsilon}^k}-knl_0\frac{T}{t}|\tau|^{2}_{h_{t,\epsilon}^k}, \ \forall \
\tau \in H^0(M,L^k).
$$
Also recall that we have the following well-known Sobolev inequality: for any $R>0$, there is $C(R)$ independent of $t$ and $\epsilon$ such that
$$
  \left(\int_{B_{\omega_{t,\epsilon}}(x_0,R)}|f|^{\frac{2n}{n-1}}\omega_{t,\epsilon}^{n}\right)^{\frac{n-1}{n}}\leq C(R)\int_{B_{\omega_{t,\epsilon}}(x_0,R)}
  |f|^2+|\nabla f|^2_{\omega_{t,\epsilon}}\omega_{t,\epsilon}^{n}.
$$
for all $f\in C^1_{0}(B_{\omega_{t,\epsilon}}(x_0,R))$.

By a standard iteration argument(Lemma 3.14 \cite{NTZ}) we have
\begin{lem}
For any $R>0$, there exists $C(R)$ independent of $t$, $\epsilon$ and $k\geq 1$ such that for any $t\in [T-\bar{t},T)$ and $B_{\omega_{t,\epsilon}}(x,2r)\subset B_{\omega_{t,\epsilon}}(x_0,R)$, if $\tau \in H^0(B_{\omega_{t,\epsilon}}(x,2r),L^k)$, then
$$
 \sup_{B_{\omega_{t,\epsilon}}(x,r)}|\tau|^2_{h^k_{t,\epsilon}}\leq C(R)\cdot r^{-2n}\cdot k^n\cdot\int_{B_{\omega_{t,\epsilon}}(x,2r)}|\tau|^2_{h^k_{t,\epsilon}}\omega_{t,\epsilon}^{n}.
$$
\end{lem}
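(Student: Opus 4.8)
The plan is to establish the sup bound by a Moser iteration applied to the subsolution inequality for $|\tau|^2_{h^k_{t,\epsilon}}$, tracking the dependence on $k$ and $r$ explicitly. First I would set $f = |\tau|^2_{h^k_{t,\epsilon}}$ and record from the Bochner formula (4.1)--(4.2) applied to $\tau\in H^0(\cdot,L^k)$ with Hermitian metric $h^k_{t,\epsilon}$ that $\triangle_{\omega_{t,\epsilon}} f \geq |\nabla\tau|^2_{h^k_{t,\epsilon}} - knl_0\frac{T}{t}f \geq -Ckf$, where the curvature bound $\Theta_{h^k_{t,\epsilon}}\leq kl_0\frac{T}{t}\omega_{t,\epsilon}$ and the uniform lower bound $t\geq T-\bar t$ are used to make $C$ independent of $t$, $\epsilon$ and $k$. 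Since $f\geq 0$, this says $f$ is a nonnegative subsolution of $\triangle f + Ck f \geq 0$; note $|\nabla f| \leq 2|\tau|_{h^k_{t,\epsilon}}|\nabla\tau|_{h^k_{t,\epsilon}}$, which will feed the iteration through Kato's inequality or directly via $\triangle f^p \geq$ (good terms).

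Next I would run the De Giorgi--Nash--Moser iteration. Fix concentric balls $B_{\omega_{t,\epsilon}}(x,r)\subset B_{\omega_{t,\epsilon}}(x,2r)\subset B_{\omega_{t,\epsilon}}(x_0,R)$ and a sequence of cutoff functions $\varphi_j$ supported on radii $r_j = r(1+2^{-j})$ with $|\nabla\varphi_j|\leq C 2^j/r$. Testing the subsolution inequality against $\varphi_j^2 f^{2p-1}$ and using the uniform Sobolev inequality on $B_{\omega_{t,\epsilon}}(x_0,R)$ (valid with $C(R)$ independent of $t,\epsilon$ since $\omega_{t,\epsilon}$ satisfies the Ricci lower bound and has uniformly bounded diameter and volume) gives the reverse-H\"older step
$$
\left(\int_{B_{r_{j+1}}} f^{p\chi}\,\omega_{t,\epsilon}^n\right)^{1/\chi}\leq C(R)\Big(\frac{4^j}{r^2}+Ckp\Big)\int_{B_{r_j}} f^{p}\,\omega_{t,\epsilon}^n,
$$
with $\chi = \frac{n}{n-1}>1$. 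Iterating over $p = \chi^j$ and multiplying the resulting constants, the geometric factors $\prod 4^{j/\chi^j}$ converge, the $r^{-2}$ factors accumulate to $r^{-2n}$ (the exponent being $\sum \chi^{-j}\cdot 1 = n$ up to the normalization), and the $k$-factors accumulate to $k^n$ in the same way. This yields
$$
\sup_{B_{\omega_{t,\epsilon}}(x,r)}|\tau|^2_{h^k_{t,\epsilon}}\leq C(R)\,r^{-2n}\,k^n\int_{B_{\omega_{t,\epsilon}}(x,2r)}|\tau|^2_{h^k_{t,\epsilon}}\,\omega_{t,\epsilon}^n,
$$
which is exactly the claimed estimate; alternatively one simply cites Lemma 3.14 of \cite{NTZ}, whose hypotheses (Sobolev inequality with uniform constant, subsolution inequality with linear growth $Ck$) we have just verified in the present setting.

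The main obstacle is ensuring that every constant in the iteration is genuinely uniform in $t$, $\epsilon$ and $k$ simultaneously. The $t$-uniformity of the Bochner constant comes from $t\in[T-\bar t,T)$ so $T/t\leq T/(T-\bar t)$; the $\epsilon$-uniformity and the uniform Sobolev constant $C(R)$ are exactly the content of the hypotheses carried over from Section 3 (uniform Ricci lower bound $Ric(\omega_{t,\epsilon})\geq -\frac{1}{T-\bar t}\omega_{t,\epsilon}$, uniform volume bounds from Corollary 3.3, and the stated Sobolev inequality), so these are available. The only genuinely delicate bookkeeping is the $k$-dependence: one must check that the zeroth-order coefficient in the PDE enters each iteration step linearly in $p$ with a factor $k$, so that after summing $\sum_j \chi^{-j}$-weighted logarithms the total power of $k$ is $n$ rather than something worse; this is standard but must be done carefully, tracking that at step $j$ the coefficient is $\sim 4^j r^{-2} + Ck\chi^j$ and that $\sum_j \chi^{-j}$ and $\sum_j j\chi^{-j}$ both converge. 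Once this is in place the conclusion follows by the diagonal iteration in the usual way.
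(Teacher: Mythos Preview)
Your proposal is correct and is precisely the standard Moser iteration that the paper has in mind: the paper itself does not write out a proof but simply states ``By a standard iteration argument (Lemma 3.14 \cite{NTZ})'', after recording exactly the two inputs you identify, namely the subsolution inequality $\triangle_{\omega_{t,\epsilon}}|\tau|^2_{h^k_{t,\epsilon}}\geq |\nabla\tau|^2-knl_0\tfrac{T}{t}|\tau|^2$ coming from the Bochner formula and the curvature bound $\Theta_{h_{t,\epsilon}}\leq l_0\tfrac{T}{t}\omega_{t,\epsilon}$, together with the uniform Sobolev inequality on $B_{\omega_{t,\epsilon}}(x_0,R)$. Your bookkeeping of the $r^{-2n}$ and $k^n$ factors through the iteration is the content of that cited lemma, so there is nothing to add.
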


Recall the Gromov-Hausdorff convergence
$$
 (M,\omega_{t,\epsilon},x_0)\xrightarrow{d_{GH}} (M_T,d_T,x_T).
$$
Define the Hermitian line bundle $(L_T,h_T)$ on the regular set $\mathcal{R}\subset M_T$ by
$$
L=l_0(L^{'}+TK_{\mathcal{R}}+T(1-\beta)L_{D}),\ h_T=h^{l_0}_{L^{'}}\otimes (\omega_{T}^{-n})^{l_0T}\otimes h_{D}^{l_0T(1-\beta)}\cdot e^{-l_0T(1-\beta)\log|s_D|^2_{h_D}}.
$$
Under the isometry $\overline{\id}: \ \overline{(M_{reg}\backslash D,d_{\omega_T})}\rightarrow (M_T,d_T)$ and $\mathcal{R}=M_{reg}\backslash D$, we know that
the Hermitian line bundles $(L,h_{t,\epsilon})$ converges smoothly to $(L_T,h_T)$ on $\mathcal{R}$ as $t\rightarrow T$ and $\epsilon\rightarrow 0$. \\
\begin{cor}
Let $R>0$, $t_i\rightarrow T$, $\epsilon\rightarrow 0$ and $\tau_i$ be a sequence of holomorphic sections of $L^k$, $k\geq 1$, satisfying
$$
  \int_{M}|\tau_i|^2_{h^k_{h_{t_i,\epsilon_i}}}\omega^n_{t_i,\epsilon_i}\leq 1.
$$
Then, passing to a subsequence if necessary, $\tau_i$ converges to a locally bounded holomorphic section $\tau_{\infty}$ of $L_T^k$ over $\mathcal{R}$ which
satisfies
$$
 \sup_{B_{d_T}(x,r)\cap \mathcal{R}}|\tau_{\infty}|^2_{h_T^{k}}\leq C(R)\cdot r^{-2n}\cdot k^n\cdot \int_{B_{d_T}(x,2r)\cap \mathcal{R}}|\tau_{\infty}|^2_{h_T^{k}}\omega_T^{n}
$$
whenever $B_{d_T}(x,2r)\subset B_{d_T}(x_T,R)$.
\end{cor}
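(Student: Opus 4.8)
The plan is to promote the uniform mean‑value estimate of Lemma (4.12) to the limit space, using the smooth local convergence of $\omega_{t,\epsilon}$ and $h_{t,\epsilon}$ on $\mathcal{R}$ from Proposition (4.13), and then to extract a limit section by a diagonal argument. First I would fix an exhaustion $K_1\subset K_2\subset\cdots$ of $\mathcal{R}=M_{reg}\setminus D$ by compact sets. By Lemma (4.10) the Gromov–Hausdorff convergence is represented by the identity map, so after this identification the $\tau_i$ are honest holomorphic sections of the fixed line bundle $L^k\to M$ over $\mathcal{R}$, and by Proposition (4.13) together with the definition of $h_T$ the Hermitian metrics $h_{t_i,\epsilon_i}$ converge smoothly to $h_T$ on each $K_j$. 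Given $j$, I cover $K_j$ by finitely many balls $B_{\omega_{t_i,\epsilon_i}}(y,r_y)$ with $B_{\omega_{t_i,\epsilon_i}}(y,2r_y)\subset B_{\omega_{t_i,\epsilon_i}}(x_0,R_j)$ for all large $i$, where $R_j$ is chosen so that $K_j\subset B_{d_T}(x_T,R_j)$; then Lemma (4.12) and the hypothesis
$$
\int_{B_{\omega_{t_i,\epsilon_i}}(y,2r_y)}|\tau_i|^2_{h^k_{t_i,\epsilon_i}}\omega_{t_i,\epsilon_i}^n\leq\int_M|\tau_i|^2_{h^k_{t_i,\epsilon_i}}\omega_{t_i,\epsilon_i}^n\leq 1
$$
give $\sup_{K_j}|\tau_i|^2_{h^k_{t_i,\epsilon_i}}\leq C(j,k)$ uniformly in $i$.

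Next, using the smooth convergence of $h_{t_i,\epsilon_i}$ to rewrite these sup bounds in a fixed background metric, and then the mean‑value/interior elliptic estimates for holomorphic sections, I obtain $\|\tau_i\|_{C^l(K_j)}\leq C(l,j,k)$ for every $l$ and $j$. A standard diagonal argument then produces a subsequence converging in $C^\infty_{loc}(\mathcal{R})$ to a holomorphic section $\tau_\infty$ of $L_T^k$ over $\mathcal{R}$; in particular $\tau_\infty$ is locally bounded, and by Fatou's lemma $\int_{\mathcal{R}}|\tau_\infty|^2_{h_T^k}\omega_T^n\leq 1$.

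It then remains to pass to the limit in the mean‑value inequality. Fix $x\in\mathcal{R}$ and $r>0$ with $B_{d_T}(x,2r)\subset B_{d_T}(x_T,R)$, and take $x_i\to x$ realizing the convergence. Applying Lemma (4.12) at $(x_i,r)$, the left‑hand side satisfies $\sup_{B_{d_T}(x,r)\cap\mathcal{R}}|\tau_\infty|^2_{h_T^k}\leq\liminf_i\sup_{B_{\omega_{t_i,\epsilon_i}}(x_i,r)}|\tau_i|^2_{h^k_{t_i,\epsilon_i}}$, since any compact subset of $B_{d_T}(x,r)\cap\mathcal{R}$ is eventually contained in $B_{\omega_{t_i,\epsilon_i}}(x_i,r)$ and carries the smooth convergence. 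For the right‑hand side I would pass to a weak‑$*$ limit $\mu$ of the Radon measures $|\tau_i|^2_{h^k_{t_i,\epsilon_i}}\omega_{t_i,\epsilon_i}^n$ (of uniformly bounded mass); the smooth convergence on $\mathcal{R}$ forces $\mu=|\tau_\infty|^2_{h_T^k}\omega_T^n$ there, so the desired inequality follows as soon as one knows that no mass escapes onto the exceptional set, i.e. $\mu(\mathcal{S})=0$ where $\mathcal{S}=M_T\setminus\mathcal{R}$.

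The main obstacle is precisely this no‑concentration statement near $\mathcal{S}$, which by Lemma (4.5) and the proof of Proposition (4.13)(2) contains both the Gromov–Hausdorff limit of $D$ and that of $M_{sing}$. Along $M_{sing}$ this should follow from the degeneration $\int_{M_{sing}\setminus D}\omega_{t_i,\epsilon_i}^n=\left(\tfrac{T-t_i}{T}\right)^m\int_{M_{sing}\setminus D}\omega_0^m\to 0$ (as in Proposition (4.13)(2)) combined with the uniform bounds on the complement of a neighbourhood of $M_{sing}$. Along $D$ the metrics $\omega_{t_i,\epsilon_i}$ are genuinely conical, which is the feature absent from \cite{NTZ}; the plan is to exploit that $D$ lands inside $\mathcal{S}$, which has real codimension $\geq 2$ (Claim in Proposition (4.9) and \cite{ChCo2}), and to feed cutoff functions supported on a $\rho$‑neighbourhood $N_\rho(\mathcal{S})$ with small energy, together with the scale‑invariant form of Lemma (4.12), into a covering argument bounding $\limsup_i\int_{N_\rho(\mathcal{S})}|\tau_i|^2_{h^k_{t_i,\epsilon_i}}\omega_{t_i,\epsilon_i}^n$ by a quantity that tends to $0$ as $\rho\to 0$.
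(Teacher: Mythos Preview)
Your overall strategy --- uniform local $L^\infty$ bounds from the Moser lemma, diagonal extraction on an exhaustion of $\mathcal{R}$, then passing the mean-value inequality to the limit --- is precisely the standard argument and is what the paper has in mind (the corollary is stated without proof, as an immediate consequence of the preceding Moser lemma and the smooth convergence of $(L,h_{t,\epsilon})$ to $(L_T,h_T)$ on $\mathcal{R}$).

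You are, however, working much harder than necessary on the no-concentration step $\mu(\mathcal{S})=0$. The observation you are missing is that the \emph{same} Moser lemma, applied on a slightly larger ball $B_{\omega_{t_i,\epsilon_i}}(x_i,4r)\subset B_{\omega_{t_i,\epsilon_i}}(x_0,2R)$ together with the global hypothesis $\int_M|\tau_i|^2\,\omega_{t_i,\epsilon_i}^n\leq 1$, already gives a uniform \emph{pointwise} bound $\sup_{B_{\omega_{t_i,\epsilon_i}}(x_i,2r)}|\tau_i|^2\leq C'(R,k)$ independent of $i$. Hence the measures $|\tau_i|^2\,\omega_{t_i,\epsilon_i}^n$ are dominated on these balls by $C'\,\omega_{t_i,\epsilon_i}^n$, and the question reduces to showing that the \emph{volume} measures $\omega_{t_i,\epsilon_i}^n$ themselves do not charge $\mathcal{S}$ in the limit. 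That is immediate: by Colding's volume convergence (already invoked in the proof that $\mathcal{R}$ is open) the Riemannian volumes converge to $\mathcal{H}^{2n}$ on $M_T$, and $\mathcal{H}^{2n}(\mathcal{S})=0$ since $\codim(\mathcal{S})\geq 2$. No separate treatment of $M_{sing}$ versus $D$, and no cutoff-energy or covering argument, is needed.

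A minor correction: the approximating metrics $\omega_{t_i,\epsilon_i}$ are smooth on all of $M$ --- that is the whole purpose of the $\epsilon$-regularization --- and it is only their limits $\omega_{t_i}$ as $\epsilon_i\to 0$ that acquire cone singularities along $D$; so the phrase ``genuinely conical'' is misplaced here.
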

\subsection{Gradient estimate to holomorphic sections}
In this subsection we introduce a family of Hermitian metrics on $L$ which are
$$
h_{FS,\epsilon}=h^{l_0}_{L^{'}}\otimes \left(\frac{\Omega}{(\epsilon^2+|s_D|^2_{h_D})^{1-\beta}}\right)^{-l_0T}\otimes h_D^{l_0T(1-\beta)}\cdot
 e^{-l_0T(1-\beta)\log(\epsilon^2+|s_D|^2_{h_D})}
$$
The metric $h_{FS,\epsilon}$ has curvature $$\Theta_{h_{FS,\epsilon}}=l_0\eta_T. $$ where $\eta_T$ is the induced Fubini-Study metric which satisfies $\eta_T\leq C\omega_{t,\epsilon}$ for some $C$ independent of $t$ and $\epsilon$; see Section 3. An easy calculation shows that for any $t\in [T-\bar{t},T)$,
$$
 h_{t,\epsilon}=e^{-l_0\frac{T}{t}u_{t,\epsilon}}h_{FS,\epsilon},
$$
so $h_{t,\epsilon}$ is uniformly equivalent to $h_{FS,\epsilon}$.

In the following computation we denote $\nabla\tau=\nabla^{h^k_{FS,\epsilon}}\tau$, $\nabla\bar{\nabla}\tau=\nabla^{h^k_{FS,\epsilon}}\bar{\nabla}^{h^k_{FS,\epsilon}}\tau$, and $|\nabla\tau|=|\nabla^{h^k_{FS,\epsilon}}\tau|_{h^k_{FS,\epsilon}\otimes \omega_{t,\epsilon}}$, etc., for any $\tau \in H^0(M,L^k)$, $k\geq 1$.
\begin{lem}
For any $t\in[T-\bar{t},T)$, $\epsilon>0$ and $\tau \in H^0(M,L^k)$, $k\geq 1$, one has
$$
 \triangle|\tau|^2\geq |\nabla\tau|^2-Ck|\tau|^2
$$
and
$$
 \triangle|\nabla\tau|^2\geq |\nabla\nabla\tau|^2+|\nabla\bar{\nabla}\tau|^2-kl_0\nabla_j(\eta_T)_{i\bar{j}}\langle\tau,\nabla_{\bar{i}}\bar{\tau}\rangle
 -kl_0\nabla_{\bar{j}}(tr_{\omega_{t,\epsilon}}\eta_T)\langle\nabla_j\tau,\bar{\tau}\rangle-Ck|\nabla\tau|^2.
$$
\end{lem}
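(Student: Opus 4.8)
The plan is to prove Lemma 4.15 as a direct application of the Bochner formulas (4.1) and (4.2) from Lemma 4.11, using the geometric facts already established in Section 3 for the continuity equation (2.7). The key point is that although we differentiate with respect to the fixed background metrics $h^k_{FS,\epsilon}$, whose curvature is $\Theta_{h^k_{FS,\epsilon}} = k l_0 \eta_T$, the Laplacian and the norms are all taken with respect to $\omega_{t,\epsilon}$, so the quantities $tr_{\omega_{t,\epsilon}}\eta_T$ and $R_{i\bar j}(\omega_{t,\epsilon})$ will appear and must be controlled uniformly in $t$ and $\epsilon$.

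For the first inequality, I would substitute $\Theta_h = \Theta_{h^k_{FS,\epsilon}} = k l_0 \eta_T$ into (4.1), obtaining
$$
\triangle_{\omega_{t,\epsilon}}|\tau|^2 = |\nabla\tau|^2 - k l_0 |\tau|^2 \cdot tr_{\omega_{t,\epsilon}}\eta_T.
$$
By Proposition 3.7 we have $\eta_T \le C \omega_{t,\epsilon}$, hence $tr_{\omega_{t,\epsilon}}\eta_T \le nC$, which gives $\triangle|\tau|^2 \ge |\nabla\tau|^2 - Ck|\tau|^2$ after absorbing $l_0$ and the dimensional constant into $C$. For the second inequality I would feed $\Theta_h = kl_0\eta_T$ into (4.2), which produces the terms $-kl_0\nabla_j(\eta_T)_{i\bar j}\langle\tau,\nabla_{\bar i}\bar\tau\rangle$ and $-kl_0\nabla_{\bar j}(tr_{\omega_{t,\epsilon}}\eta_T)\langle\nabla_j\tau,\bar\tau\rangle$ verbatim, together with three remaining terms: $R_{i\bar j}(\omega_{t,\epsilon})\langle\nabla_j\tau,\nabla_{\bar i}\bar\tau\rangle$, $-2kl_0(\eta_T)_{i\bar j}\langle\nabla_j\tau,\nabla_{\bar i}\bar\tau\rangle$, and $-kl_0\, tr_{\omega_{t,\epsilon}}\eta_T\cdot|\nabla\tau|^2$. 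The last two are bounded below by $-Ck|\nabla\tau|^2$ using $\eta_T\le C\omega_{t,\epsilon}$ again. For the Ricci term I would invoke the curvature lower bound established in Section 3, namely $Ric(\omega_{t,\epsilon}) \ge -\frac{1}{t}\omega_{t,\epsilon} \ge -\frac{1}{T-\bar t}\omega_{t,\epsilon}$ (from the computation $Ric(\omega_{t,\epsilon}) = -\frac1t\omega_{t,\epsilon} + \frac1t\omega_0 + (\text{nonnegative terms})$), so that $R_{i\bar j}(\omega_{t,\epsilon})\langle\nabla_j\tau,\nabla_{\bar i}\bar\tau\rangle \ge -C|\nabla\tau|^2 \ge -Ck|\nabla\tau|^2$ since $k\ge 1$. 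Collecting the error terms yields the claimed bound.

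The only genuine subtlety — and what I expect to be the main obstacle — is making sure the constant $C$ is truly uniform in $t\in[T-\bar t,T)$ and in $\epsilon>0$, especially near the singular set where $\eta_T$ degenerates and $\omega_{t,\epsilon}$ blows up along $D$. This is exactly what Proposition 3.7 ($tr_{\omega_{t,\epsilon}}\eta_T \le C$ uniformly) and the explicit Ricci lower bound deliver, so the argument reduces to carefully tracking which geometric estimates of Section 3 are needed rather than any new analysis. I would also note that the inequalities are pointwise on $M\setminus D$ where everything is smooth, and the constant $l_0$ is fixed once and for all, so it harmlessly enters $C$. No maximum principle is needed here; that will come in the subsequent gradient estimate which uses this lemma as input.
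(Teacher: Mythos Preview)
Your proposal is correct and follows exactly the paper's own approach: the paper's proof is a single sentence stating that both inequalities are direct consequences of the Bochner formulas (Lemma 4.1) together with the Ricci lower bound $Ric(\omega_{t,\epsilon})\ge -\frac{1}{t}\omega_{t,\epsilon}$, and your write-up simply spells out these substitutions (plus the uniform bound $tr_{\omega_{t,\epsilon}}\eta_T\le C$ from Proposition 3.7) in detail.
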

\begin{proof}
They are direct consequences of the Bochner formulas (Lemma (4.1)) and $Ric(\omega_{t,\epsilon})\geq -\frac{1}{t}\omega_{t,\epsilon}$.
\end{proof}
\begin{prop}
For any $R>0$, there exists $C(R)$ independent of $t$, $\epsilon$ and $k\geq 1$ such that for any $t\in[T-\bar{t},T)$ and $B_{\omega_{t,\epsilon}}(x,2r)\subset B_{\omega_{t,\epsilon}}(x_0,R)$, if $\tau \in H^0(B_{\omega_{t,\epsilon}}(x,2r),L^k)$, then
$$
\sup_{B_{\omega_{t,\epsilon}}(x,r)}|\tau|^2_{h^k_{FS,\epsilon}}\leq C(R)\cdot r^{-2n}\cdot k^n\cdot\int_{B_{\omega_{t,\epsilon}}(x,2r)}|\tau|^2_{h^k_{FS,\epsilon}}\omega_{t,\epsilon}^{n}
$$
and
$$
\sup_{B_{\omega_{t,\epsilon}}(x,r)}|\nabla^{h_{FS,\epsilon}^{k}}\tau|^2_{h^k_{FS,\epsilon}\otimes \omega_{t,\epsilon}}\leq C(R)\cdot r^{-2n-2}\cdot k^{n+1}\cdot
\int_{B_{\omega_{t,\epsilon}}(x,2r)}|\tau|^2_{h^k_{FS,\epsilon}}\omega_{t,\epsilon}^{n}.
$$
\end{prop}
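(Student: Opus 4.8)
The plan is to run Moser's iteration on two quantities, using the Bochner inequalities of the preceding lemma together with the uniform Sobolev inequality. For the first estimate, I would note that $\Theta_{h^k_{FS,\epsilon}}=kl_0\eta_T$ and $\eta_T\le C\omega_{t,\epsilon}$ (Section 3), so the first Bochner inequality already gives $\triangle_{\omega_{t,\epsilon}}|\tau|^2_{h^k_{FS,\epsilon}}\ge|\nabla\tau|^2-Ck|\tau|^2\ge -Ck|\tau|^2$, with $C$ independent of $t,\epsilon,k$. This is exactly the hypothesis of the iteration scheme already used for $h_{t,\epsilon}$ (Lemma 3.14 of \cite{NTZ}): iterating $L^p$ norms on the balls $B_{\omega_{t,\epsilon}}(x,(1+2^{-m})r)$ with the uniform Sobolev inequality on $B_{\omega_{t,\epsilon}}(x_0,R)$, the coefficient $Ck$ yields the factor $k^n$ and the first displayed inequality follows.

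For the gradient estimate, the difficulty will be the pair of first-order ``bad'' terms $-kl_0\nabla_j(\eta_T)_{i\bar j}\langle\tau,\nabla_{\bar i}\bar\tau\rangle$ and $-kl_0\nabla_{\bar j}(tr_{\omega_{t,\epsilon}}\eta_T)\langle\nabla_j\tau,\bar\tau\rangle$ in the second Bochner inequality: they involve $\nabla(tr_{\omega_{t,\epsilon}}\eta_T)$, which is not bounded uniformly in $t,\epsilon$ (only the $L^2$ bound coming from $\triangle_{\omega_{t,\epsilon}}(tr_{\omega_{t,\epsilon}}\eta_T)\ge -C+C|\nabla(tr_{\omega_{t,\epsilon}}\eta_T)|^2$ is at hand), so they cannot be absorbed into $|\nabla\tau|^2$ plus a bounded term by Cauchy--Schwarz. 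The key observation I would exploit is that $\eta_T=\frac1{l_0}\Phi^*\omega_{FS}$ is $d$-closed and $\omega_{t,\epsilon}$ is K\"ahler, hence $\nabla_j(\eta_T)_{i\bar j}=\nabla_i(tr_{\omega_{t,\epsilon}}\eta_T)$; since also $\nabla_{\bar i}\tau=0$ for holomorphic $\tau$, the two bad terms add up to exactly $-kl_0$ times the mixed term of $\triangle_{\omega_{t,\epsilon}}\big((tr_{\omega_{t,\epsilon}}\eta_T)|\tau|^2\big)$. I would therefore work with
$$
 G=|\nabla\tau|^2+kl_0\,(tr_{\omega_{t,\epsilon}}\eta_T)\,|\tau|^2+\mu k\,|\tau|^2,
$$
where $\mu$ is a large constant to be fixed. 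Expanding $\triangle_{\omega_{t,\epsilon}}G$ via the two Bochner inequalities, the bad terms cancel against the mixed term produced by $kl_0\triangle_{\omega_{t,\epsilon}}\big((tr_{\omega_{t,\epsilon}}\eta_T)|\tau|^2\big)$; then using $|\nabla\nabla\tau|^2+|\nabla\overline{\nabla}\tau|^2\ge 0$, $0\le tr_{\omega_{t,\epsilon}}\eta_T\le C$, $\triangle_{\omega_{t,\epsilon}}(tr_{\omega_{t,\epsilon}}\eta_T)\ge -C$ and $\triangle_{\omega_{t,\epsilon}}|\tau|^2\ge |\nabla\tau|^2-Ck|\tau|^2$, and choosing $\mu$ larger than the constant multiplying $-Ck|\nabla\tau|^2$, I expect to reach $\triangle_{\omega_{t,\epsilon}}G\ge -Ck^2|\tau|^2\ge -CkG$, the last step since $\mu k|\tau|^2\le G$.

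It then remains to supply the right-hand side. First I would prove a Caccioppoli inequality by multiplying $\triangle_{\omega_{t,\epsilon}}|\tau|^2\ge |\nabla\tau|^2-Ck|\tau|^2$ by $\phi^2$ (with $\phi\equiv1$ on $B(x,\rho_1)$, $\mathrm{supp}\,\phi\subset B(x,\rho_2)$, $|\nabla\phi|\le C/(\rho_2-\rho_1)$), integrating by parts and absorbing $\tfrac12\int\phi^2|\nabla\tau|^2$: this gives $\int_{B(x,\rho_1)}|\nabla\tau|^2\,\omega_{t,\epsilon}^n\le Ckr^{-2}\int_{B(x,\rho_2)}|\tau|^2\,\omega_{t,\epsilon}^n$ for $r<\rho_1<\rho_2<2r$ (using $k\ge 1$ and $r\le R$). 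Applying the iteration scheme of \cite{NTZ} to $G$ with $\triangle_{\omega_{t,\epsilon}}G\ge -CkG$ and the uniform Sobolev inequality yields $\sup_{B(x,r)}G\le C(R)k^nr^{-2n}\int_{B(x,\rho_1)}G\,\omega_{t,\epsilon}^n$; bounding $\int_{B(x,\rho_1)}G\le \int_{B(x,\rho_1)}|\nabla\tau|^2+Ck\int_{B(x,\rho_1)}|\tau|^2\le Ckr^{-2}\int_{B(x,2r)}|\tau|^2\,\omega_{t,\epsilon}^n$ by the Caccioppoli estimate and the first part of the proposition, I would conclude $\sup_{B(x,r)}|\nabla\tau|^2\le\sup_{B(x,r)}G\le C(R)k^{n+1}r^{-2n-2}\int_{B(x,2r)}|\tau|^2_{h^k_{FS,\epsilon}}\,\omega_{t,\epsilon}^n$, which is the second displayed inequality. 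All constants being independent of $t$ and $\epsilon$, the estimates are stable under $t\to T$, $\epsilon\to 0$.

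The hard part is the cancellation step: the bad first-order terms in the Bochner formula for $|\nabla\tau|^2$ are not controlled pointwise, and the whole argument hinges on the closedness of the pulled-back Fubini--Study form turning them into an exact total term that is killed by the addition of $kl_0(tr_{\omega_{t,\epsilon}}\eta_T)|\tau|^2$ to the test function; once this is done, only the bounded quantity $tr_{\omega_{t,\epsilon}}\eta_T$ and its Laplacian lower bound from Yau's Schwarz lemma (Section 3) enter the estimate.
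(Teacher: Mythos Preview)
Your argument is correct and is essentially the approach the paper has in mind: the paper omits the proof and refers to Proposition 3.17 of \cite{NTZ}, whose content is precisely Nash--Moser iteration applied to the Bochner inequalities of Lemma 4.17, with the auxiliary term $kl_0(\mathrm{tr}_{\omega_{t,\epsilon}}\eta_T)|\tau|^2$ added to $|\nabla\tau|^2$ so that the uncontrolled first-order terms $\nabla_j(\eta_T)_{i\bar j}\langle\tau,\nabla_{\bar i}\bar\tau\rangle$ and $\nabla_{\bar j}(\mathrm{tr}_{\omega_{t,\epsilon}}\eta_T)\langle\nabla_j\tau,\bar\tau\rangle$ cancel (via the closedness of $\eta_T$) against the cross term of the product Laplacian. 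Your use of the Schwarz-lemma bounds $\mathrm{tr}_{\omega_{t,\epsilon}}\eta_T\le C$ and $\triangle_{\omega_{t,\epsilon}}(\mathrm{tr}_{\omega_{t,\epsilon}}\eta_T)\ge -C$ from Section 3, the Caccioppoli step, and the final bookkeeping $k+r^{-2}\le C(R)kr^{-2}$ are all standard and match the referenced argument.
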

The proof of this proposition need to use Lemma (4.17) and Nash-Moser iteration. Because its proof is exactly same as Proposition (3.17) in \cite{NTZ}, we omit it.        \\

In subsection (4.3) we construct a Hermitian line bundle $(L_T,h_T)$ on $\mathcal{R}$. Notice that $h_T=e^{-l_0u_T}h_{FS,\epsilon}=e^{-l_0u_T}\cdot h_{L^{'}}^{l_0}\otimes \Omega^{-l_0T}\otimes h_D^{l_0T(1-\beta)}$. The following lemma is very useful(c.f. Lemma (3.19) \cite{NTZ}).
\begin{lem}
There is a family of cut-off functions $\gamma_\kappa\in C_0^{\infty}(\mathcal{R})$, $\kappa>0$, with $0\leq\gamma_\kappa\leq1$ such that $\gamma_\kappa^{-1}(1)$ forms an exhaustion of $\mathcal{R}$ and, moreover,
$$
 \int_{M_T}|\overline{\partial}\gamma_\kappa|^2\omega_T^{n}\rightarrow 0, \ as \ \kappa\rightarrow 0.
$$
\end{lem}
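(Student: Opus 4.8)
The plan is to build the cut-off functions $\gamma_\kappa$ by hand, using the fact that the singular set $\mathcal{S}$ (which, by Proposition 4.13 and the results of Section 4.2, coincides with the image of $\mathcal{S}_M\cup M_{\mathrm{sing}}\cup D$ under $\overline{\id}$) has real codimension at least $2$ in the metric completion $M_T$, and the fact that $\mathcal{R}=M_{\mathrm{reg}}\backslash D$ carries the smooth metric $\omega_T$. The strategy is the classical one for removing a small-codimension singular set: on a neighbourhood of $\mathcal{S}$ one interpolates using a logarithmic profile of the distance function (or, more concretely on the algebraic side, of $|s_{D^{'}}|_{h_{D^{'}}}^2$ for a defining section of a divisor $D^{'}\supset D\cup\mathcal{S}_M\cup M_{\mathrm{sing}}$), whose Dirichlet energy tends to $0$ as the cut-off parameter shrinks precisely because of the codimension bound.

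First I would fix an exhaustion. Choose $D^{'}$ a divisor with $D\cup\mathcal{S}_M\cup M_{\mathrm{sing}}\subset D^{'}$, with defining section $s_{D^{'}}$ of a line bundle $L_{D^{'}}$ and Hermitian metric $h_{D^{'}}$; then $\mathcal{R}=M_{\mathrm{reg}}\backslash D\supset M\backslash D^{'}$, and by Proposition 4.10 and Corollary 4.12 the metric $\omega_T$ is smooth on $M_{\mathrm{reg}}\backslash D$. Define, for small $\kappa>0$, a smooth function $\rho_\kappa$ of the single real variable $s=\log|s_{D^{'}}|_{h_{D^{'}}}^2\in[-\infty,0)$ that equals $1$ on $\{s\ge -1/\kappa\}$, equals $0$ on $\{s\le -2/\kappa\}$, and interpolates linearly in between, so that $|\rho_\kappa'|\le C\kappa$ on the transition annulus $A_\kappa=\{-2/\kappa\le s\le -1/\kappa\}$. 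Set $\gamma_\kappa=\rho_\kappa\circ(\log|s_{D^{'}}|_{h_{D^{'}}}^2)$, smoothed near the locus $s=-\infty$; then $0\le\gamma_\kappa\le1$, $\gamma_\kappa\equiv1$ on a set exhausting $M\backslash D^{'}$, hence exhausting $\mathcal{R}$, as $\kappa\to0$.

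Next I would estimate the energy. We have $\overline\partial\gamma_\kappa=(\rho_\kappa'\circ s)\,\overline\partial s$, and by Poincar\'e--Lelong $\overline\partial s$ is, away from $D^{'}$, a smooth $(0,1)$-form with $|\overline\partial s|^2_{\omega_T}$ controlled on compact subsets; more importantly the measure $\omega_T^n$ is, by Corollary 3.3 and the a priori estimates of Section 3, comparable to $|s_D|_{h_D}^{-2(1-\beta)}\Omega$, which is still a finite measure. Thus
\[
\int_{M_T}|\overline\partial\gamma_\kappa|^2_{\omega_T}\,\omega_T^n
= \int_{A_\kappa}|\rho_\kappa'|^2\,|\overline\partial s|^2_{\omega_T}\,\omega_T^n
\le C\kappa^2\int_{A_\kappa}|\overline\partial s|^2_{\omega_T}\,\omega_T^n .
\]
The main obstacle is to show the right-hand side is $o(1)$: one must bound $\int_{A_\kappa}|\overline\partial s|^2_{\omega_T}\,\omega_T^n$ by something growing slower than $\kappa^{-2}$, and this is exactly where the codimension-$\ge2$ property of $\mathcal{S}$ (together with the uniform volume and Sobolev bounds from Lemma 4.15 and the local harmonic-coordinate structure from Lemma 4.6) enters: a tubular neighbourhood of a real-codimension-$2$ set has $\omega_T^n$-volume of order $(\text{radius})^2$, and $|\overline\partial s|^2_{\omega_T}\sim(\text{radius})^{-2}$ there, so the integrand is integrable and the annular integral tends to $0$. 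I would carry this out either by passing to the Gromov--Hausdorff limit and using Cheeger--Colding's structure theory for the singular set (as invoked in Lemma 4.8 and Proposition 4.13), or more elementarily by working upstairs on $M$ with the divisor $D^{'}$, where the computation reduces to the local fact that $\int_{\{r\le|z_1|\le 2r\}\times B}|z_1|^{-2}\,dV<\infty$ and the slice volumes decay — a routine but delicate estimate that I would not grind through here. Letting $\kappa\to0$ then gives $\int_{M_T}|\overline\partial\gamma_\kappa|^2\omega_T^n\to0$, proving the lemma.
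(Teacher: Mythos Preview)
The paper does not give its own argument for this lemma; it simply refers the reader to Lemma~3.19 of \cite{NTZ}. Your log-cutoff construction $\gamma_\kappa=\rho_\kappa(\log|s_{D'}|^2_{h_{D'}})$ is precisely the standard device used in that reference (and in Song's work \cite{So}, also cited here), so in spirit your proposal matches the intended proof.

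Two points deserve tightening. First, strictly speaking your $\gamma_\kappa^{-1}(1)$ exhausts only $M\setminus D'$, not all of $\mathcal{R}=M_{\mathrm{reg}}\setminus D$; the discrepancy $\mathcal{R}\cap D'$ is a proper analytic subset and hence $\omega_T^n$-null, which suffices for every application of the lemma in the paper (the Moser iteration behind Proposition~4.20), but does not literally give an exhaustion of $\mathcal{R}$. Second, the energy bound you defer as ``routine but delicate'' is the only substantive step and should not be left open, since near $M_{\mathrm{sing}}$ you have no two-sided control on $\omega_T$. The clean way to close it is to use $|s|\ge 1/\kappa$ on $A_\kappa$ to write
\[
\kappa^2\int_{A_\kappa}|\bar\partial s|^2_{\omega_T}\,\omega_T^n
\;\le\;\int_{A_\kappa}\frac{|\bar\partial s|^2_{\omega_T}}{s^2}\,\omega_T^n
\;=\;\int_{A_\kappa}\bigl|\bar\partial\log(-\log|s_{D'}|^2)\bigr|^2_{\omega_T}\,\omega_T^n,
\]
and then show the last integrand is globally in $L^1(\omega_T^n)$: rewriting it as $n\,\sqrt{-1}\,\partial\log(-s)\wedge\bar\partial\log(-s)\wedge\omega_T^{n-1}$ and using that $\omega_T$ lies in a fixed K\"ahler class reduces this to the local computation $\int_0 r^{-1}(\log r)^{-2}\,dr<\infty$ near $D'$, which is unaffected by the cone singularity along $D$ (there $|\bar\partial s|^2_{\omega_T}\omega_T^n\sim|z_1|^{-2}\,dV$ just as in the smooth case). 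Dominated convergence then yields the claimed $o(1)$.
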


By a standard iteration we have(c.f.\cite{So})
\begin{prop}
Let $R>0$, $t_i\rightarrow T$, $\epsilon\rightarrow 0$ and $\tau_i$ be a sequence of holomorphic sections of $L^k$, $k\geq 1$, satisfying
$$
  \int_{M}|\tau_i|^2_{h^k_{h_{t_i,\epsilon_i}}}\omega^n_{t_i,\epsilon_i}\leq 1.
$$
Then, passing to a subsequence if necessary, $\tau_i$ converges to a locally bounded holomorphic section $\tau_{\infty}$ of $L_T^k$ over $\mathcal{R}$ which
satisfies
$$
\sup_{B_{d_T}(x,r)\cap \mathcal{R}}|\nabla^{h_{T}^{k}}\tau_{\infty}|^2_{h^k_{T}\otimes \omega_{T}}\leq C(R)\cdot r^{-2n-2}\cdot k^{n+1}\cdot
\int_{{B_{d_T}}(x,2r)\cap \mathcal{R}}|\tau_{\infty}|^2_{h^k_{T}}\omega_{T}^{n}.
$$
whenever $B_{d_T}(x,2r)\subset B_{d_T}(x_T,R)$.
\end{prop}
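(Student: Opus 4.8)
The plan is to obtain the statement by passing the local estimates of the previous Proposition (the Nash--Moser sup-bounds for $|\tau|^2_{h_{FS,\epsilon}^k}$ and $|\nabla^{h_{FS,\epsilon}^k}\tau|^2$) to the Gromov--Hausdorff limit, in the same way the $L^\infty$ limit estimate was obtained from its local version; the model is \cite{So} and \cite{NTZ}, so the iteration machinery itself will only be invoked.

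First I would produce the limit section. Note that the two factors of $h_{FS,\epsilon}$ involving $\epsilon^2+|s_D|^2_{h_D}$ cancel, so $h_{FS,\epsilon}$ is actually independent of $\epsilon$; writing $h_{FS}$ for it, one has $h_{t,\epsilon}=e^{-l_0\frac{T}{t}u_{t,\epsilon}}h_{FS}$ and $h_T=e^{-l_0u_T}h_{FS}$. Since the $C^0$ norm of $u_{t,\epsilon}$ is uniformly bounded, these three metrics are uniformly comparable, and the normalization $\int_M|\tau_i|^2_{h_{t_i,\epsilon_i}^k}\omega^n_{t_i,\epsilon_i}\le1$ gives, for the fixed $k$, a uniform $L^2$ bound for $\tau_i$ in the fixed metric $h_{FS}$. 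Applying the previous Proposition on balls $B_{\omega_{t_i,\epsilon_i}}(x_i,2r)$, with $x_i\to x\in\mathcal R=M_{reg}\backslash D$ chosen along the identity-map Gromov--Hausdorff approximation, yields uniform $C^0$ and $C^1$ bounds for $\tau_i$ on every compact subset of $\mathcal R$. Because $\omega_{t_i,\epsilon_i}\to\omega_T$ and $(L,h_{t_i,\epsilon_i})\to(L_T,h_T)$ smoothly on $\mathcal R$ and the $\tau_i$ are holomorphic, Cauchy estimates in local holomorphic trivializations (equivalently, elliptic regularity for $\overline\partial$) together with a diagonal argument extract a subsequence converging in $C^\infty_{loc}(\mathcal R)$ to a holomorphic section $\tau_\infty$ of $L_T^k$; its local boundedness is precisely the content of the preceding $L^\infty$ Corollary.

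Next I would pass the gradient inequality to the limit. Fix $x\in\mathcal R$ and $r>0$ with $B_{d_T}(x,2r)\subset B_{d_T}(x_T,R)$, choose $x_i\to x$, and apply the second inequality of the previous Proposition to $\tau_i$ on $B_{\omega_{t_i,\epsilon_i}}(x_i,2r)$, which for large $i$ sits inside a ball about $x_0$ of radius arbitrarily close to $R$. On the left-hand side, every point of $B_{d_T}(x,r)\cap\mathcal R$ is a limit of points of $B_{\omega_{t_i,\epsilon_i}}(x_i,r)$, so the $C^1_{loc}$-convergence on $\mathcal R$ passes the supremum to $\sup_{B_{d_T}(x,r)\cap\mathcal R}|\nabla^{h_{FS}^k}\tau_\infty|^2$ in the $\liminf$. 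On the right-hand side, the local $L^\infty$ estimate for $|\tau_i|^2_{h_{t_i,\epsilon_i}^k}$ together with the uniform domination $\omega^n_{t_i,\epsilon_i}\le C\Omega/|s_D|^{2(1-\beta)}_{h_D}$ established in Section 3 show that the $L^2$-mass near $\mathcal S\cup D$ is uniformly small, while on the regular part the integrand converges locally uniformly; hence the right-hand sides converge to $C(R)r^{-2n-2}k^{n+1}\int_{B_{d_T}(x,2r)\cap\mathcal R}|\tau_\infty|^2_{h_{FS}^k}\omega_T^n$. Finally one converts from $h_{FS}$ to the stated $h_T=e^{-l_0u_T}h_{FS}$ using $\nabla^{h_T^k}\tau=\nabla^{h_{FS}^k}\tau-l_0k(\partial u_T)\otimes\tau$ and the uniform bound on $|\nabla u_T|$ over $\mathcal R$ from Section 3; here it is useful to note that the curvature of $h_T$ on $\mathcal R$ equals $l_0\omega_T$, which is parallel with constant trace, so that the Bochner inequalities for $\tau_\infty$ carry no $\nabla\eta_T$ terms and the power $k^{n+1}$ is the right one — equivalently, the Nash--Moser iteration may be run directly for $\tau_\infty$ over $(M_T,d_T,\omega_T^n)$ with the limiting Sobolev inequality.

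The hard part is not any single computation but the commutation of these limits at the singular locus: matching, uniformly in $i$, the metric balls $B_{\omega_{t_i,\epsilon_i}}(x_i,r)$ with the limit ball $B_{d_T}(x,r)$ when $x$ approaches $\mathcal S\cup D$, and turning the $L^2$-integrals over full balls (which may meet $\mathcal S\cup D$) into integrals over their regular parts without loss of mass. These points are handled by the identity-map Gromov--Hausdorff approximation, the fact that $\mathcal S$ has codimension at least $2$, the local $L^\infty$ estimate of the previous subsection, the uniform volume bound of Section 3, and, where a genuine cut-off is needed to absorb the $\overline\partial$-error across $\mathcal S$, the cut-off functions $\gamma_\kappa$ of the preceding Lemma. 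The underlying Moser / Nash--Moser iteration producing the inequalities is identical to \cite{So} and \cite{NTZ} and would not be reproved.
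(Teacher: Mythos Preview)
Your outline is essentially correct, and the paper's own proof is literally the single line ``By a standard iteration we have (c.f.\ \cite{So})'', meaning Moser iteration run \emph{directly} on $(\mathcal R,\omega_T,h_T)$ with the cut-offs $\gamma_\kappa$ of the preceding lemma to cross $\mathcal S$ --- exactly your ``equivalently'' remark at the end.

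One caveat on your primary route (pass the $h_{FS}$-estimate of the previous Proposition to the limit, then convert $h_{FS}\to h_T$): this does not deliver the stated inequality with $C(R)$ independent of $k$. The change of pointwise norm $|\cdot|^2_{h_T^k}=e^{-kl_0u_T}|\cdot|^2_{h_{FS}^k}$ contributes a factor $e^{kl_0\,\mathrm{osc}\,u_T}$ when you compare the sup on the left with the integral on the right, and the connection shift $\nabla^{h_T^k}-\nabla^{h_{FS}^k}=-kl_0\,\partial u_T$ combined with the $L^\infty$ bound on $|\tau_\infty|$ produces a term of order $k^2\cdot k^n=k^{n+2}$ rather than $k^{n+1}$. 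So the conversion step, as written, spoils the explicit $k$-power.

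The direct iteration you mention avoids this cleanly: on $\mathcal R$ the curvature $\Theta_{h_T}=l_0\omega_T$ is parallel with constant trace, so in the Bochner formula for $|\nabla\tau_\infty|^2$ the terms $\nabla_j(\Theta)_{i\bar j}$ and $\nabla_{\bar j}(\mathrm{tr}_{\omega_T}\Theta)$ vanish and one obtains $\triangle|\nabla\tau_\infty|^2\ge |\nabla\nabla\tau_\infty|^2+|\nabla\bar\nabla\tau_\infty|^2-Ck|\nabla\tau_\infty|^2$, from which the standard iteration (with the limiting Sobolev inequality and $\gamma_\kappa$ to handle $\mathcal S$) gives exactly $C(R)\,r^{-2n-2}k^{n+1}$. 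This is the route the paper intends, and why the $\gamma_\kappa$ lemma is placed immediately before this proposition.
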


\subsection{Algebraic structure of $M_T$}
Recall that if $\tau \in H^0(M,L^k)$, then by the construction of $(L_T,h_T)$ on $\mathcal{R}$, one knows that $\tau|_{\mathcal{R}}$ denoted by $\tau_{\infty}$ is a holomorphic section of $(L_T,h_T)$. For a fixed $\tau \in H^0(M,L^k)$, one has
$$
\int_{M}|\tau|^2_{h_{t,\epsilon}}\omega_{t,\epsilon}^n\leq C\int_{M}|\tau|^2_{h_{FS,\epsilon}}\frac{\Omega}{|s_D|_{h_D}^{2(1-\beta)}}\leq C_{\tau}.
$$
Therefore, by $h_{t,\epsilon}\xrightarrow{C^{\infty}} h_T$ and Lemma (4.14), we have $$\sup_{B_{d_T}(x,r)\cap \mathcal{R}}|\tau_{\infty}|^2_{h_T^{k}}\leq C(R,r,k)$$

Notice that
\begin{align*}
  |\nabla^{h_T^{k}}\tau_{\infty}|_{h_T^{k}\otimes \omega_T}&\leq |\nabla^{h_{FS,\epsilon}^{k}}\tau_{\infty}|_{h_T^{k}\otimes \omega_T}+kl_0|\tau_{\infty}|_{h_T^{k}}\cdot |\nabla u_T|_{\omega_T}  \\
  &\leq C|\nabla^{h_{FS,\epsilon}^{k}}\tau_{\infty}|_{h_{FS,\epsilon}^{k}\otimes \eta_T}+kl_0|\tau_{\infty}|_{h_T^{k}}\cdot |\nabla u_T|_{\omega_T}.
\end{align*}
where the last inequality base on the estimate $\omega_T\geq C^{-1}\eta_T$ and the fact that $h_T$ is equivalent to $h_{FS,\epsilon}$.

From Proposition (3.9), $|\tau_{\infty}|_{h_T^{k}}\cdot |\nabla u_T|_{\omega_T}$ is bounded on $B_{d_T}(x,r)\cap \mathcal{R}$. By the Song's argument(Lemma (3.10) in \cite{So}) one gets
$|\nabla^{h_{FS,\epsilon}^{k}}\tau_{\infty}|_{h_{FS,\epsilon}^{k}\otimes \eta_T}$ is also bounded on $B_{d_T}(x,r)\cap \mathcal{R}$.
Thus
$$
 \sup_{B_{d_T}(x,r)\cap \mathcal{R}} |\nabla^{h_T^{k}}\tau_{\infty}|_{h_T^{k}\otimes \omega_T}\leq C(R,r,k),
$$
i.e. $\tau_{\infty}$ with metric $h_T$ is locally Lipschitz, moreover it can be continuously extended to $M_T$.

So, the map
$$
  \Phi_{T}: \ (\mathcal{R},d_T)\rightarrow (\Phi(M),\omega_{FS})
$$
defined by $\Phi$ can be continuously extended to
$$
 \Phi_{T}: \ (M_T,d_T)\rightarrow (\Phi(M),\omega_{FS})
$$
that is a Lipschitz map, since $\Phi_{T}^{*}\omega_{FS}=kl_0\eta_T\leq Ckl_0\omega_T$.
\begin{prop}
$\Phi_T$ is injective and is a local homeomorphism.
\end{prop}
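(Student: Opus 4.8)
The plan is to combine the separation-of-points property coming from a sufficiently ample power of $L$ with the local biholomorphism statement on the regular set, then bootstrap both across the singular set using the Lipschitz continuity of $\Phi_T$ and the fact that $\mathcal{S}$ has real codimension $\ge 2$. First I would treat injectivity. On $\mathcal{R}=M_{reg}\backslash D$ the map $\Phi_T$ agrees with the holomorphic embedding $\Phi$ restricted to $M_{reg}\backslash D$, and since $L=l_0(L'+TK_M+T(1-\beta)L_D)$ is base-point free (this is how $\Phi$ was defined in Section 3) and, after passing to a large enough multiple $L^k$, very ample, the sections of $H^0(M,L^k)$ separate points and tangents on $M_{reg}$. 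The key point is that by the convergence statements of Corollary (4.16) and Proposition (4.18), every holomorphic section $\tau\in H^0(M,L^k)$ restricts to a holomorphic section $\tau_\infty$ of $(L_T^k,h_T^k)$ on $\mathcal{R}$ which is locally bounded and locally Lipschitz, hence extends continuously to all of $M_T$; so the evaluation map into $\mathbb{P}^{N_k}$ is well defined and continuous on $M_T$. If $x\ne y$ in $M_T$ had $\Phi_T(x)=\Phi_T(y)$, then since $\mathcal{R}$ is dense and $\Phi_T$ is continuous, one can approximate $x,y$ by regular points $x_i,y_i$; using that $\Phi_T|_{\mathcal{R}}$ is an honest injective immersion and a quantitative separation estimate (Lemma (4.14) gives the needed uniform sup-bounds on sections, and the two-sided comparison $C^{-1}\eta_T\le\omega_T\le C_K\eta_T$ on compacts of $\mathcal{R}$ controls the metric distortion), one derives that $d_T(x_i,y_i)\to 0$, contradicting $x\ne y$. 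Concretely I would fix $k$ large enough that $L^k$ is very ample, pick a section vanishing at $x$ but not at $y$ among $H^0(M,L^k)$, pass to the limit section $\tau_\infty$, and use its continuity on $M_T$ to get a genuine separation of $x$ and $y$ in projective space.

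Next, the local homeomorphism property. On $\mathcal{R}$ this is immediate: $\Phi|_{M_{reg}\backslash D}$ is a local biholomorphism onto its image since $\Phi$ is an immersion on $M_{reg}$ by definition of $M_{reg}=M\backslash M_{sing}$, and $\Phi_T^*\omega_{FS}=kl_0\eta_T$ with $\eta_T$ a genuine smooth Kähler metric there. The content is at points of $\mathcal{S}$. Here I would argue that $\Phi_T$ is a continuous injective map from the compact Hausdorff space $M_T$ (compactness of $M_T$ follows from the diameter bound, e.g. $\mathrm{diam}(\omega_{t,\epsilon})\le C_t$ together with Gromov-Hausdorff convergence and Corollary (4.11)) onto its image $\Phi(M)$, hence is automatically a homeomorphism onto its image by the standard fact that a continuous bijection from a compact space to a Hausdorff space is a homeomorphism. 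So once injectivity is in hand, ``local homeomorphism'' is essentially free, provided $M_T$ is compact; I would state the compactness of $M_T$ as a consequence of the earlier estimates.

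The main obstacle I expect is the injectivity across $\mathcal{S}$: one must rule out that two distinct singular limit points, or a singular point and a regular point, collapse to the same point of $\mathbb{P}^N$. The mechanism that prevents this is that the Lipschitz sections $\tau_\infty$ — produced in Corollary (4.16) and made Lipschitz via Proposition (3.9) and the gradient estimate of Proposition (4.18) — separate points of $\mathcal{R}$ uniformly on compact sets, and by continuity this separation persists on $M_T=\overline{(\mathcal{R},d_{\overline{\omega_T}})}$. The delicate part is uniformity of the separation constant as the base points approach $\mathcal{S}$: I would extract it from the sup-estimate in Lemma (4.14) (which is uniform in $t,\epsilon,k$ on a fixed ball $B_{d_T}(x_T,R)$) combined with the lower volume bound near regular points used in Lemma (4.5)/Proposition (4.13). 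Alternatively, one can invoke the identification $(M_T,d_T)\cong\overline{(M_{reg}\backslash D,d_{\omega_T})}$ from Corollary (4.10) and transport the question to the smooth model, where $\Phi$ is an embedding and the only issue is continuity of the extension — which is precisely what the Lipschitz bound on $\tau_\infty$ delivers. Either way, the heart of the argument is the passage from uniform sup/gradient bounds on sections over shrinking balls to a genuine separation statement on the metric completion.
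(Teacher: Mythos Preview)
Your local homeomorphism argument is circular. You claim compactness of $M_T$ from the bound $\mathrm{diam}(\omega_{t,\epsilon})\le C_t$, but that constant depends on $t$ and is not known to stay bounded as $t\to T$ at this point in the paper; Section~4.2 explicitly notes that $(M_T,d_T)$ is ``maybe noncompact in a prior.'' The uniform diameter bound is Proposition~5.13, which is proved \emph{after} and \emph{using} the present proposition (the finiteness of the diameter is deduced from the homeomorphism $\Phi_T:M_T\to\Phi(M)$). So the ``continuous bijection from a compact space to a Hausdorff space'' shortcut is unavailable here, and the local homeomorphism has to be obtained directly.

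The paper defers to \cite{NTZ}, where the argument (in the Donaldson--Sun/Tian scheme) goes through H\"ormander's $L^2$ $\bar\partial$-estimates on the approximating manifolds $(M,\omega_{t_i,\epsilon_i})$ with constants uniform in $i$: for each $p\in M_T$ and scale $r$ one constructs peak sections of $L^k$ (with $k$ comparable to $r^{-2}$) concentrated near $p$, passes them to the limit using Corollary~4.16 and Proposition~4.20, and obtains local holomorphic coordinates on a $d_T$-ball around $p$ that realize $\Phi_T$ as a homeomorphism onto an open subset of $\Phi(M)$. The same peak-section construction is what makes your injectivity sketch rigorous: to separate $x\ne y$ in $\mathcal S$ one needs a section of $L_T^k$ with $|\tau_\infty|(x)$ bounded below and $|\tau_\infty|(y)$ small, and simply restricting a fixed $\tau\in H^0(M,L^k)$ does not produce this, since $x,y$ are not points of $M$ and an approximation $x_i,y_i\in\mathcal R$ forces the separating section to vary with $i$. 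The missing input in both halves of your proposal is the uniform $L^2$ existence theorem for $\bar\partial$; the a priori sup and gradient bounds of Lemma~4.14 and Proposition~4.18 only control sections once you have produced them.
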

The proof of this Proposition is exactly same as Proposition (3.21) and Proposition (3.22) in \cite{NTZ} so we omit it.
\section{Diameter bound of the conical K\"{a}hler metric}
Let $\omega_{T}$ be the solution to the following equation in the current sense
$$
 (\omega_T+\sqrt{-1}\partial\bar{\partial}u_{T})^n=e^{\frac{u_T}{T}}\frac{\Omega}{|s_D|^{2(1-\beta)}_{h_D}}.
$$
In \cite{So},Song developed a method to prove the diameter bound of a singular K\"{a}hler-Einstein metric. In this subsection, we follow his idea to show the diameter bound of $(M\backslash (D\cup \bar{D}), \omega_T)$ where $\bar{D}$ is any divisor such that $[\omega_0]-Tc_1(M)+T(1-\beta)c_{1}(L_D)-\mu c_1(L_{\bar{D}})>0$ for some $\mu>0$. We will consider the following three cases.   \\

\textbf{Case} 1. If $p\in D\backslash \bar{D}$, then by Theorem (1.4) there exists a neighborhood $U$ of $p$ such that $$\omega_T\leq C_{U}\frac{\omega_0}{|s_D|^{2(1-\beta)}_{h_D}}, \  on \  U$$.  \\

\textbf{Case} 2.

Let $p\in D\cap \bar{D}$ be any point, $\pi: \ \widetilde{M}\rightarrow M$ be the blow-up at $p$ with exceptional divisor $\pi^{-1}(p)=E$. Then
$$
 K_{\widetilde{M}}=\pi^{*}K_{M}+(n-1)E.
$$
Let $h_E$ be the Hermitian metric on $L_E$ associated with the divisor $E$, and $\sigma_{E}$ be a defining section. We denote by $D_1=\overline{\pi^{-1}(D)-E}$, $h_{D_1}=\pi^{*}h_D$ and $D_2=\overline{\pi^{-1}(\bar{D})-E}$, $h_{D_2}=\pi^{*}h_{\bar{D}}$. Let $\chi$ be a fixed K\"{a}hler metric on $\widetilde{M}$. Let $\sigma_{D_1}$ be a defining section on $L_{D_1}$ and $\sigma_{D_2}$ be a defining section on $L_{D_2}$.By the calculation one has
$$
\pi^{*}\eta_T+\mu\sqrt{-1}\partial\bar{\partial}\log|\sigma_{D_2}|^2_{h_{D_2}}+\delta_0\sqrt{-1}\partial\bar{\partial}\log|\sigma_E|^2_{h_E}\geq \delta_1 \chi
$$
for some small $\delta_0,\delta_1>0$ on $\widetilde{M}\backslash (D_2\cup E)$. Observe that $\tilde{\Omega}=|\sigma_E|^{-2(n-1)}_{h_E}\pi^{*}\Omega$ defines a smooth volume form on $\widetilde{M}$.
Consider the following family of Monge-Amp\`{e}re equations on $\widetilde{M}$
\begin{equation}
(\pi^{*}\eta_T+\epsilon\chi+\sqrt{-1}\partial\bar{\partial}\widetilde{\varphi_{\epsilon,\delta}})^n=
e^{\frac{1}{T}\widetilde{\varphi_{\epsilon,\delta}}}(\epsilon^2+|\sigma_E|^2_{h_E})^{n-1}\frac{\tilde{\Omega}}{(\delta^2+|\sigma_{D_{1}}|^2_{h_{D_1}})^{1-\beta}}.
\end{equation}
By Yau's solution to Calabi conjecture \cite{Y2}, the equation has a unique smooth solution $\widetilde{\varphi_{\epsilon,\delta}}$; moreover
$$
\widetilde{\omega_{\epsilon,\delta}}=\pi^{*}\eta_T+\epsilon\chi+\sqrt{-1}\partial\bar{\partial}\widetilde{\varphi_{\epsilon,\delta}}
$$
is a smooth K\"{a}hler metric on $\widetilde{M}$.
\begin{lem}
For any $\mu>0$ and $\delta_0>0$, there exist $C(\mu,\delta_0)$ and $C$ independent of $\epsilon$ and $\delta$ such that
$$
 \mu\sqrt{-1}\partial\bar{\partial}\log|\sigma_{D_2}|^2_{h_{D_2}}+\delta_0\sqrt{-1}\partial\bar{\partial}\log|\sigma_E|^2_{h_E}-C(\mu,\delta_0)\leq \widetilde{\varphi_{\epsilon,\delta}}\leq C.
$$
\end{lem}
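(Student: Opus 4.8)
The plan is to prove the two bounds separately. As a preliminary reduction, I would observe that the asserted lower bound (reading its left–hand side as the function $\mu\log|\sigma_{D_2}|^2_{h_{D_2}}+\delta_0\log|\sigma_E|^2_{h_E}-C(\mu,\delta_0)$) is equivalent to a uniform lower bound for the auxiliary potential
\[
 v_{\epsilon,\delta}:=\widetilde{\varphi_{\epsilon,\delta}}-\mu\log|\sigma_{D_2}|^2_{h_{D_2}}-\delta_0\log|\sigma_E|^2_{h_E},
\]
for which $\widetilde{\omega_{\epsilon,\delta}}=\bigl(\pi^{*}\eta_T+\mu\sqrt{-1}\partial\bar{\partial}\log|\sigma_{D_2}|^2_{h_{D_2}}+\delta_0\sqrt{-1}\partial\bar{\partial}\log|\sigma_E|^2_{h_E}\bigr)+\epsilon\chi+\sqrt{-1}\partial\bar{\partial}v_{\epsilon,\delta}$, the bracketed background form dominating $\delta_1\chi$ on $\widetilde{M}\setminus(D_2\cup E)$ by the hypothesis recalled just before $(5.1)$. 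I would also record the two facts that make the right–hand side of $(5.1)$ tractable: writing its density as $\Omega_{\epsilon,\delta}:=(\epsilon^2+|\sigma_E|^2_{h_E})^{n-1}(\delta^2+|\sigma_{D_1}|^2_{h_{D_1}})^{-(1-\beta)}\tilde\Omega$, this measure is bounded in $L^{p}(\widetilde{M},\tilde\Omega)$ for some $p>1$, uniformly in $\epsilon,\delta$ (here one uses that $\tilde\Omega$ is a fixed smooth volume form, that $(\epsilon^2+|\sigma_E|^2_{h_E})^{n-1}$ is uniformly bounded above, and that $1-\beta<1$), while $[\pi^{*}\eta_T+\epsilon\chi]$ is K\"ahler with volume bounded above and, for $\epsilon$ small, bounded away from $0$, since $[\eta_T]$ is nef and big.

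For the upper bound I would invoke Ko\l odziej's $L^{\infty}$ estimate, in the capacity form used by Zhang \cite{Zh} (as in the $C^{0}$ estimate of \S3): the uniform $L^{p}$ control of $\Omega_{\epsilon,\delta}$ together with the bounded volume of the reference class gives a bound on $\mathrm{osc}_{\widetilde{M}}\widetilde{\varphi_{\epsilon,\delta}}$ independent of $\epsilon,\delta$, and combining this with the identity $\int_{\widetilde{M}}e^{\widetilde{\varphi_{\epsilon,\delta}}/T}\Omega_{\epsilon,\delta}=\int_{\widetilde{M}}(\pi^{*}\eta_T+\epsilon\chi)^n$ and the fact that $\Omega_{\epsilon,\delta}$ stays bounded below on the fixed region $\{|\sigma_E|_{h_E}\ge\epsilon_0\}$ pins down the scale, yielding $-C\le\widetilde{\varphi_{\epsilon,\delta}}\le C$, in particular the asserted upper bound. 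I would stress that a bare maximum principle does not close this: since $\Omega_{\epsilon,\delta}$ collapses like $\epsilon^{2(n-1)}$ along $E$, testing $(5.1)$ at a maximum point only controls $\widetilde{\varphi_{\epsilon,\delta}}$ up to an error of order $\log(1/\epsilon)$, which is exactly the gap the global estimate removes.

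Granting the uniform two-sided bound of the previous step, the stated lower bound is then immediate, since $\log|\sigma_{D_2}|^2_{h_{D_2}}$ and $\log|\sigma_E|^2_{h_E}$ are bounded above. If one prefers a direct derivation in precisely the stated shape — which is the shape needed for the Schwarz–lemma estimates to come — I would apply the maximum principle to $v_{\epsilon,\delta}-\lambda\chi_\beta(\delta^2+|\sigma_{D_1}|^2_{h_{D_1}})$ for a sufficiently small $\lambda>0$: this function tends to $+\infty$ near $D_2\cup E$ (for fixed $\epsilon,\delta$, as $\widetilde{\varphi_{\epsilon,\delta}}$ is smooth and $\chi_\beta$ is bounded), so it attains its minimum at an interior point $x_0$, where $\sqrt{-1}\partial\bar{\partial}v_{\epsilon,\delta}(x_0)\ge\lambda\sqrt{-1}\partial\bar{\partial}\chi_\beta(\delta^2+|\sigma_{D_1}|^2_{h_{D_1}})(x_0)$ and hence $\widetilde{\omega_{\epsilon,\delta}}(x_0)\ge\delta_1\chi(x_0)+\lambda\sqrt{-1}\partial\bar{\partial}\chi_\beta(\delta^2+|\sigma_{D_1}|^2_{h_{D_1}})(x_0)$. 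By the Guenancia--P\'aun construction \cite{GP}, for $\lambda$ small the right–hand side is a positive $(1,1)$-form whose $n$-th power is $\ge c(\lambda)\,(\delta^2+|\sigma_{D_1}|^2_{h_{D_1}})^{-(1-\beta)}\chi^n$; feeding this into $(5.1)$ cancels exactly the factor $(\delta^2+|\sigma_{D_1}|^2_{h_{D_1}})^{-(1-\beta)}$, and using the uniform upper bounds on $(\epsilon^2+|\sigma_E|^2_{h_E})^{n-1}$ and on $\tilde\Omega/\chi^n$ forces $\widetilde{\varphi_{\epsilon,\delta}}(x_0)\ge -C$; unwinding the two substitutions (each subtracted term being bounded above) then yields $v_{\epsilon,\delta}\ge -C(\mu,\delta_0)$ on all of $\widetilde{M}\setminus(D_2\cup E)$, which is the assertion.

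The main obstacle is the non-uniform degeneration of the density of $(5.1)$: it collapses like $\epsilon^{2(n-1)}$ along the exceptional divisor $E$ and blows up like $(\delta^2+|\sigma_{D_1}|^2_{h_{D_1}})^{-(1-\beta)}$ along the strict transform $D_1$, and both must be controlled independently of $\epsilon$ and $\delta$. The $E$-collapse is what defeats a pure maximum-principle proof of the upper bound and forces the global Ko\l odziej--Zhang estimate (which applies precisely because $1-\beta<1$ keeps $\Omega_{\epsilon,\delta}$ in $L^{p}$); the $D_1$-blow-up is what makes the $\chi_\beta$-correction indispensable in the maximum-principle route for the lower bound. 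Once these two points are handled, the remaining steps are routine manipulations with the explicit weights appearing in $(5.1)$.
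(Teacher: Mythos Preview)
Your proposal reaches the correct conclusion, but both halves proceed differently from the paper.

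For the upper bound the paper is more elementary: it applies Jensen's inequality to $(5.1)$ to get
\[
\frac{1}{V_{\epsilon,\delta}}\int_{\widetilde{M}}\frac{\widetilde{\varphi_{\epsilon,\delta}}}{T}\,\Omega_{\epsilon,\delta}
=\frac{1}{V_{\epsilon,\delta}}\int_{\widetilde{M}}\log\frac{\widetilde{\omega_{\epsilon,\delta}}^{\,n}}{\Omega_{\epsilon,\delta}}\,\Omega_{\epsilon,\delta}
\le \log\!\int_{\widetilde{M}}(\pi^*\eta_T+\epsilon\chi)^n-\log V_{\epsilon,\delta}\le C,
\]
with $V_{\epsilon,\delta}=\int\Omega_{\epsilon,\delta}$ trapped between $V_{0,1}$ and $V_{1,0}$, and then invokes the sup--mean inequality for $(\pi^*\eta_T+\epsilon\chi)$-psh functions. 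This bypasses both the maximum principle (whose failure you correctly diagnose) and the Ko\l odziej machinery. Your route works in spirit but, as written, is slightly circular: Ko\l odziej--Zhang controls the oscillation only once $e^{\widetilde{\varphi_{\epsilon,\delta}}/T}\Omega_{\epsilon,\delta}$ is placed in $L^p$, which already presupposes $\sup\widetilde{\varphi_{\epsilon,\delta}}\le C$; you should either cite the positive-sign theory of \cite{EGZ} directly, or obtain the upper bound first by the Jensen argument above and then feed it into \cite{Zh}.

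For the lower bound the paper does \emph{not} use your $\chi_\beta$-corrected maximum principle. Instead it introduces an auxiliary Monge--Amp\`ere equation on $\widetilde{M}$,
\[
\bigl(\pi^{*}\eta_T-\mu\,Ric(h_{D_2})-\delta_0\,Ric(h_E)+\epsilon\chi+\sqrt{-1}\partial\bar{\partial}\psi_{\epsilon,\delta}\bigr)^n
=e^{\psi_{\epsilon,\delta}/T}\,(\epsilon^2+|\sigma_E|^2_{h_E})^{n-1}\frac{\tilde{\Omega}}{(\delta^2+|\sigma_{D_1}|^2_{h_{D_1}})^{1-\beta}},
\]
whose reference form is genuinely K\"ahler ($\ge\delta_1\chi$), obtains $|\psi_{\epsilon,\delta}|\le C(\mu,\delta_0)$ from \cite{EGZ}, and then bounds $H_{\epsilon,\delta}=v_{\epsilon,\delta}-\psi_{\epsilon,\delta}$ from below by the ordinary maximum principle: the quotient of the two equations carries the extra weight $|\sigma_{D_2}|^{2\mu}_{h_{D_2}}|\sigma_E|^{2\delta_0}_{h_E}$, which forces the minimum of $H_{\epsilon,\delta}$ away from $D_2\cup E$ and makes the comparison trivial there. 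Your direct argument---absorbing the $D_1$-singularity with the Guenancia--P\u aun barrier $\chi_\beta$ so that the model volume form cancels $(\delta^2+|\sigma_{D_1}|^2)^{-(1-\beta)}$ exactly---is a legitimate alternative and arguably more self-contained, since it avoids the auxiliary equation and the appeal to \cite{EGZ}; the paper's route is shorter once that black box is accepted.
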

\begin{proof}
We follow Song's argument \cite{So2}. For upper bound, let
$$
V_{\epsilon,\delta}=\int_{\widetilde{M}}(\epsilon^2+|\sigma_E|^2_{h_E})^{n-1}\frac{\tilde{\Omega}}{(\delta^2+|\sigma_{D_1}|^2_{h_{D_1}})^{1-\beta}}
$$
be the volume. We see that
$$
V_{1,0}\geq V_{\epsilon,\delta}\geq V_{0,1}=\int_{\widetilde{M}}\frac{\tilde{\Omega}}{|\sigma_{D_1}|^{2(1-\beta)}_{h_{D_1}}}
$$
hence $V_{\epsilon,\delta}$ is uniformly bounded. We denote $\widetilde{\Omega_{\epsilon,\delta}}=(\epsilon^2+|\sigma_E|^2_{h_E})^{n-1}\frac{\tilde{\Omega}}{(\delta^2+|\sigma_{D_1}|^2_{h_{D_1}})^{1-\beta}}$, then we have
the following calculation
\begin{align*}
\frac{1}{V_{\epsilon,\delta}}\int_{\widetilde{M}}\frac{1}{T}\widetilde{\varphi_{\epsilon,\delta}}\widetilde{\Omega_{\epsilon,\delta}}&=\frac{1}{V_{\epsilon,\delta}}
\int_{\widetilde{M}}\log\left(\frac{\widetilde{\omega_{\epsilon,\delta}}^n}{\widetilde{\Omega_{\epsilon,\delta}}}\right)\cdot\widetilde{\Omega_{\epsilon,\delta}}\\
 &\leq \log\int_{\widetilde{M}}\widetilde{\omega_{\epsilon,\delta}}^n-\log V_{\epsilon,\delta}\\
 &=\log(\int_{\widetilde{M}}(\pi^*\eta_T+\epsilon\chi)^n) -C \leq C
\end{align*}
where for the first inequality we use Jensen¨s inequality. Since $\widetilde{\varphi_{\epsilon,\delta}}\in PSH(\widetilde{M},\pi^*\eta_T+\epsilon\chi)$,the mean value inequality implies that $$\sup_{\widetilde{M}}\widetilde{\varphi_{\epsilon,\delta}}\leq C.$$

For the lower bound, we set $\widetilde{\varphi_{\epsilon,\delta}}^{'}=\widetilde{\varphi_{\epsilon,\delta}}-\mu\sqrt{-1}\partial\bar{\partial}\log|\sigma_{D_2}|^2_{h_{D_2}}
-\delta_0\sqrt{-1}\partial\bar{\partial}\log|\sigma_E|^2_{h_E}$, then by (5.1) one knows
\begin{multline}
((\pi^{*}\eta_T+\mu\sqrt{-1}\partial\bar{\partial}\log|\sigma_{D_2}|^2_{h_{D_2}}+\delta_0\sqrt{-1}\partial\bar{\partial}\log|\sigma_E|^2_{h_E})+\epsilon\chi
+\sqrt{-1}\partial\bar{\partial}\widetilde{\varphi_{\epsilon,\delta}}^{'})^n=  \\
e^{\frac{1}{T}\widetilde{\varphi_{\epsilon,\delta}}^{'}}\cdot \frac{1}{T}|\sigma_{D_{2}}|_{h_{D_2}}^{2\mu}\cdot \frac{1}{T}|\sigma_E|^{2\delta_0}_{h_E}
\cdot (|\sigma_E|^2_{h_E}+\epsilon^2)^{n-1}\cdot \frac{\tilde{\Omega}}{(|\sigma_{D_1}|^2_{h_{D_1}}+\delta^2)^{1-\beta}}.
\end{multline}
We consider the following Monge-Amp\'{e}re equations
$$
(\pi^{*}\eta_T-\mu Ric(h_{D_2})-\delta_0 Ric(h_E)+\epsilon\chi+\sqrt{-1}\partial\bar{\partial}\psi_{\epsilon,\delta})^n=e^{\frac{1}{T}\psi_{\epsilon,\delta}}\cdot
(|\sigma_E|^2_{h_E}+\epsilon^2)^{n-1}\cdot\frac{\tilde{\Omega}}{(|\sigma_{D_1}|^2_{h_{D_1}}+\delta^2)^{1-\beta}}.
$$
By Yau's theorem \cite{Y2}, the above equation admits a unique smooth solution. By \cite{EGZ}, we have
$$
  |\psi_{\epsilon,\delta}|_{C^0}\leq C(\mu,\delta_0).
$$
Set $H_{\epsilon,\delta}=\widetilde{\varphi_{\epsilon,\delta}}'-\psi_{\epsilon,\delta}$ and $\nu_\epsilon=\pi^*\eta_T-\mu Ric(h_{D_2})-\delta_0 Ric(h_E)+\epsilon\chi$, then on $\widetilde{M}\backslash (E\cup D_1\cup D_2)$ one knows
$$
\log\frac{(\nu_\epsilon+\sqrt{-1}\partial\bar{\partial}\psi_{\epsilon,\delta}+\sqrt{-1}\partial\bar{\partial}H_{\epsilon,\delta})^n}
{(\nu_\epsilon+\sqrt{-1}\partial\bar{\partial}\psi_{\epsilon,\delta})^n}=\frac{1}{T}H_{\epsilon,\delta}-2\log T+\log|\sigma_{D_2}|^{2\mu}_{h_{D_2}}+\log
|\sigma_E|^{2\delta_0}_{h_E}.
$$
The minimum of $H_{\epsilon,\delta}$ cannot be at $D_2\cup E$. Assume $H_{\epsilon,\delta}$ attains minimum at $x_0$, then by maximum principle, one gets
$$
(\frac{1}{T}H_{\epsilon,\delta}-2\log T+\log|\sigma_{D_2}|^{2\mu}_{h_{D_2}}+\log |\sigma_E|^{2\delta_0}_{h_E})(x_0)\geq 0.
$$
Hence we know
$$
\inf_{\widetilde{M}}H_{\epsilon,\delta}\geq -C.
$$
By the $C^0$ estimate of $\psi_{\epsilon,\delta}$, we obtain the lower bound of $\widetilde{\varphi_{\epsilon,\delta}}$.
\end{proof}
\begin{lem}
There exist $C$ and $\lambda_1$ independent of $\epsilon$ and $\delta$ such that
$$
 Ric(\widetilde{\omega_{\epsilon,\delta}})\leq -\frac{1}{T}\widetilde{\omega_{\epsilon,\delta}}+C\frac{\chi}{|\sigma_{D_1}|^{2\lambda_1}_{h_{D_1}}}.
$$
\end{lem}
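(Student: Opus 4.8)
The plan is to differentiate equation (5.1) logarithmically. Since $\epsilon,\delta>0$, the right hand side of (5.1) is a smooth, strictly positive volume form on all of $\widetilde{M}$, and $\widetilde{\omega_{\epsilon,\delta}}$ is an honest smooth K\"ahler metric; applying $-\sqrt{-1}\partial\overline{\partial}\log(\,\cdot\,)$ to both sides of (5.1) therefore produces, pointwise on $\widetilde{M}$ and with no distributional terms, the identity
$$
Ric(\widetilde{\omega_{\epsilon,\delta}})=-\tfrac{1}{T}\sqrt{-1}\partial\overline{\partial}\widetilde{\varphi_{\epsilon,\delta}}+Ric(\tilde{\Omega})-(n-1)\sqrt{-1}\partial\overline{\partial}\log(\epsilon^2+|\sigma_E|^2_{h_E})+(1-\beta)\sqrt{-1}\partial\overline{\partial}\log(\delta^2+|\sigma_{D_1}|^2_{h_{D_1}}).
$$
It then remains to bound each of the four terms on the right from above, with all constants independent of $\epsilon$ and $\delta$.

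The first two terms are immediate. From $\sqrt{-1}\partial\overline{\partial}\widetilde{\varphi_{\epsilon,\delta}}=\widetilde{\omega_{\epsilon,\delta}}-\pi^{*}\eta_T-\epsilon\chi$ one gets exactly $-\tfrac{1}{T}\widetilde{\omega_{\epsilon,\delta}}$ together with $\tfrac{1}{T}(\pi^{*}\eta_T+\epsilon\chi)$, and since $\pi^{*}\eta_T$ is a smooth semipositive form on the compact manifold $\widetilde{M}$ and $\epsilon$ is bounded, this contribution is $\le C\chi$. The term $Ric(\tilde{\Omega})=-\sqrt{-1}\partial\overline{\partial}\log\tilde{\Omega}$ is a fixed smooth $(1,1)$-form on $\widetilde{M}$, because $\tilde{\Omega}$ is a smooth positive volume form, so $Ric(\tilde{\Omega})\le C\chi$ as well.

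The core of the proof is a uniform bound for the two regularized Poincar\'e--Lelong terms. For a holomorphic section $s$ of a Hermitian line bundle $(F,h_F)$ on $\widetilde{M}$ and any $a>0$, writing $f=|s|^2_{h_F}$ and using the Bochner identity $\sqrt{-1}\partial\overline{\partial}f=\sqrt{-1}\langle\nabla s,\overline{\nabla s}\rangle-f\,\Theta_{h_F}$ (cf.\ Lemma 4.1), one has
$$
\sqrt{-1}\partial\overline{\partial}\log(a^2+f)=\frac{\sqrt{-1}\langle\nabla s,\overline{\nabla s}\rangle-f\,\Theta_{h_F}}{a^2+f}-\frac{\sqrt{-1}\partial f\wedge\overline{\partial}f}{(a^2+f)^2},
$$
and the pointwise Cauchy--Schwarz inequality $\sqrt{-1}\partial f\wedge\overline{\partial}f\le f\cdot\sqrt{-1}\langle\nabla s,\overline{\nabla s}\rangle$ shows that the sum of the two ``gradient'' contributions lies between $0$ and $(a^2+f)^{-1}\sqrt{-1}\langle\nabla s,\overline{\nabla s}\rangle$, uniformly in $a$. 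Since $0\le f/(a^2+f)\le1$ and the forms $\Theta_{h_F}$ and $\sqrt{-1}\langle\nabla s,\overline{\nabla s}\rangle$ are smooth on the compact $\widetilde{M}$, it follows that $\sqrt{-1}\partial\overline{\partial}\log(a^2+f)\ge-C\chi$ and $\sqrt{-1}\partial\overline{\partial}\log(a^2+f)\le (a^2+f)^{-1}(-f\,\Theta_{h_F})+C\chi/f$. Applying the lower bound to $s=\sigma_E$, $a=\epsilon$ gives $-(n-1)\sqrt{-1}\partial\overline{\partial}\log(\epsilon^2+|\sigma_E|^2_{h_E})\le C\chi$; applying the upper bound to $s=\sigma_{D_1}$, $a=\delta$, using that $\Theta_{h_{D_1}}=\pi^{*}\Theta_{h_D}$ is semipositive and that $|\sigma_{D_1}|^2_{h_{D_1}}$ is bounded above on $\widetilde{M}$, gives $(1-\beta)\sqrt{-1}\partial\overline{\partial}\log(\delta^2+|\sigma_{D_1}|^2_{h_{D_1}})\le C\chi/|\sigma_{D_1}|^2_{h_{D_1}}$.

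Adding the four estimates yields $Ric(\widetilde{\omega_{\epsilon,\delta}})\le-\tfrac{1}{T}\widetilde{\omega_{\epsilon,\delta}}+C\chi+C\chi/|\sigma_{D_1}|^2_{h_{D_1}}$, and since $|\sigma_{D_1}|^2_{h_{D_1}}$ is bounded above on $\widetilde{M}$ the pure $C\chi$ term is absorbed into the last one, which proves the lemma with $\lambda_1=1$. The only point requiring care is the uniformity in the regularization parameters $\epsilon,\delta$: it is precisely the Cauchy--Schwarz step that prevents the term $\sqrt{-1}\partial f\wedge\overline{\partial}f/(a^2+f)^2$ from producing an $a$-dependent (or $1/f^2$) blow-up near the divisor, and this is exactly what allows all constants to be taken independent of $\epsilon$ and $\delta$.
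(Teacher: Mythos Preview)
Your argument is correct and follows exactly the paper's approach: take $-\sqrt{-1}\partial\overline{\partial}\log$ of equation~(5.1) and bound each of the resulting four terms from above by $C\chi$ or $C\chi/|\sigma_{D_1}|^{2}_{h_{D_1}}$, with the paper simply listing these four facts while you supply the Cauchy--Schwarz computation behind them and obtain the explicit value $\lambda_1=1$. One harmless remark: your appeal to $\Theta_{h_{D_1}}=\pi^*\Theta_{h_D}\ge0$ is not actually needed, since $f/(a^2+f)\le1$ already gives $-f\Theta_{h_F}/(a^2+f)\le C\chi\le C\chi/|\sigma_{D_1}|^{2}_{h_{D_1}}$ from smoothness alone.
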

\begin{proof}
First, we observe following facts:
\begin{enumerate}
\item  Since $\tilde{\Omega}$ is a smooth volume form, $Ric(\tilde{\Omega})\leq C\chi$.\\
\item  $\sqrt{-1}\partial\bar{\partial}\log(|\sigma_E|^2_{h_E}+\epsilon^2)^{n-1}\geq -C\chi$.\\
\item  $\pi^{*}\eta_T\leq C\chi$.\\
\item  If $\lambda_1$ is sufficiently large, one has $\sqrt{-1}\partial\bar{\partial}\log(|\sigma_{D_1}|_{h_{D_1}}^2+\delta^2)\leq \frac{C\chi}{|\sigma_{D_1}|_{h_{D_1}}^{2\lambda_1}}$.
\end{enumerate}
Thus by a simple calculation one gets
$$
Ric(\widetilde{\omega_{\epsilon,\delta}})\leq -\frac{1}{T}\widetilde{\omega_{\epsilon,\delta}}+C\frac{\chi}{|\sigma_{D_1}|^{2\lambda_1}_{h_{D_1}}}.
$$
\end{proof}
\begin{lem}
There exist $C$ and $\lambda$ independent of $\epsilon$ and $\delta$ such that
$$
 \widetilde{\omega_{\epsilon,\delta}}\leq \frac{C}{|\sigma_E|^{2\lambda}_{h_E}|\sigma_{D_1}|^{2\lambda}_{h_{D_1}}|\sigma_{D_2}|^{2\lambda}_{h_{D_2}}}\chi.
$$
\end{lem}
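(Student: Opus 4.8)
The plan is to bound $S:=tr_{\chi}\widetilde{\omega_{\epsilon,\delta}}$ by the maximum principle; since the eigenvalues of $\widetilde{\omega_{\epsilon,\delta}}$ relative to $\chi$ are positive, $\widetilde{\omega_{\epsilon,\delta}}\le S\,\chi$, so it suffices to bound $S$ by $C|\sigma_E|^{-2\lambda}_{h_E}|\sigma_{D_1}|^{-2\lambda}_{h_{D_1}}|\sigma_{D_2}|^{-2\lambda}_{h_{D_2}}$. First I would apply Yau's Schwarz lemma \cite{Y} to the identity map $(\widetilde{M},\widetilde{\omega_{\epsilon,\delta}})\to(\widetilde{M},\chi)$: as $\chi$ is a fixed smooth K\"ahler metric on the compact manifold $\widetilde{M}$, its holomorphic bisectional curvature is bounded below by some $-C_\chi$, and one obtains on $\widetilde{M}\setminus(E\cup D_1\cup D_2)$
\begin{equation*}
\triangle_{\widetilde{\omega_{\epsilon,\delta}}}\log S\ \ge\ \frac{-\,tr_{\chi}Ric(\widetilde{\omega_{\epsilon,\delta}})}{S}\ -\ C_\chi\, tr_{\widetilde{\omega_{\epsilon,\delta}}}\chi .
\end{equation*}
Inserting the Ricci upper bound of Lemma (5.4), $Ric(\widetilde{\omega_{\epsilon,\delta}})\le -\tfrac1T\widetilde{\omega_{\epsilon,\delta}}+C|\sigma_{D_1}|^{-2\lambda_1}_{h_{D_1}}\chi$, and taking the trace with respect to $\chi$, the first term becomes $\ge\tfrac1T-C\,S^{-1}|\sigma_{D_1}|^{-2\lambda_1}_{h_{D_1}}$, hence
\begin{equation*}
\triangle_{\widetilde{\omega_{\epsilon,\delta}}}\log S\ \ge\ \tfrac1T\ -\ C_\chi\, tr_{\widetilde{\omega_{\epsilon,\delta}}}\chi\ -\ \frac{C}{S\,|\sigma_{D_1}|^{2\lambda_1}_{h_{D_1}}}.
\end{equation*}

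To cancel the term $C_\chi\,tr_{\widetilde{\omega_{\epsilon,\delta}}}\chi$ I would use the positivity recorded just before (5.1): with $\theta:=\pi^{*}\eta_T+\mu\sqrt{-1}\partial\bar\partial\log|\sigma_{D_2}|^2_{h_{D_2}}+\delta_0\sqrt{-1}\partial\bar\partial\log|\sigma_E|^2_{h_E}$ one has $\theta\ge\delta_1\chi$ on $\widetilde{M}\setminus(D_2\cup E)$, and for $\widetilde{\varphi_{\epsilon,\delta}}'=\widetilde{\varphi_{\epsilon,\delta}}-\mu\log|\sigma_{D_2}|^2_{h_{D_2}}-\delta_0\log|\sigma_E|^2_{h_E}$ (as in the proof of Lemma (5.2)) one has $\widetilde{\omega_{\epsilon,\delta}}=\theta+\epsilon\chi+\sqrt{-1}\partial\bar\partial\widetilde{\varphi_{\epsilon,\delta}}'$, so $\triangle_{\widetilde{\omega_{\epsilon,\delta}}}\widetilde{\varphi_{\epsilon,\delta}}'=n-tr_{\widetilde{\omega_{\epsilon,\delta}}}(\theta+\epsilon\chi)\le n-\delta_1\, tr_{\widetilde{\omega_{\epsilon,\delta}}}\chi$. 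I would then consider $H=\log S-A\,\widetilde{\varphi_{\epsilon,\delta}}'+b\log|\sigma_{D_1}|^2_{h_{D_1}}$, with $A$ large enough (depending only on $C_\chi,\delta_1,b$) that $A\delta_1$ dominates $C_\chi$ together with the bounded contribution $b\,\sqrt{-1}\partial\bar\partial\log|\sigma_{D_1}|^2_{h_{D_1}}=-b\Theta_{h_{D_1}}\ge-Cb\chi$ off $D_1$, giving $\triangle_{\widetilde{\omega_{\epsilon,\delta}}}H\ge tr_{\widetilde{\omega_{\epsilon,\delta}}}\chi-C-C\,S^{-1}|\sigma_{D_1}|^{-2\lambda_1}_{h_{D_1}}$. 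By the $C^0$ estimates of Lemma (5.2) (which give $\widetilde{\varphi_{\epsilon,\delta}}'\ge-C$ and, with the smoothness of $\widetilde{\omega_{\epsilon,\delta}}$ for fixed $\epsilon,\delta$, force $-A\widetilde{\varphi_{\epsilon,\delta}}'\to-\infty$ near $E\cup D_2$) together with $b\log|\sigma_{D_1}|^2_{h_{D_1}}\to-\infty$ near $D_1$, the function $H$ attains its maximum at an interior point $x_0\in\widetilde{M}\setminus(E\cup D_1\cup D_2)$. At $x_0$, combining $\triangle_{\widetilde{\omega_{\epsilon,\delta}}}H(x_0)\le0$ with $S\cdot tr_{\widetilde{\omega_{\epsilon,\delta}}}\chi\ge n$, with $tr_{\chi}\widetilde{\omega_{\epsilon,\delta}}\le C_n(tr_{\widetilde{\omega_{\epsilon,\delta}}}\chi)^{n-1}\,\widetilde{\omega_{\epsilon,\delta}}^{\,n}/\chi^n$, with the explicit right side of (5.1), and with the $C^0$ bounds of Lemma (5.2) (which bound $\widetilde{\omega_{\epsilon,\delta}}^{\,n}/\chi^n$ above by $C|\sigma_{D_1}|^{-2(1-\beta)}_{h_{D_1}}$ and below by $c\,|\sigma_{D_2}|^{2\kappa_2}_{h_{D_2}}|\sigma_E|^{2\kappa_1}_{h_E}$), a case analysis according to whether $|\sigma_{D_1}(x_0)|_{h_{D_1}}$ is or is not bounded away from $0$ yields $S(x_0)\le C|\sigma_E(x_0)|^{-2a}_{h_E}|\sigma_{D_1}(x_0)|^{-2a}_{h_{D_1}}|\sigma_{D_2}(x_0)|^{-2a}_{h_{D_2}}$ with $a$ independent of $\epsilon,\delta$. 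Finally $H\le H(x_0)$ on $\widetilde{M}\setminus(E\cup D_1\cup D_2)$, i.e. $\log S\le H(x_0)+A\widetilde{\varphi_{\epsilon,\delta}}'-b\log|\sigma_{D_1}|^2_{h_{D_1}}$, together with $\widetilde{\varphi_{\epsilon,\delta}}\le C$ (so $A\widetilde{\varphi_{\epsilon,\delta}}'\le AC-A\mu\log|\sigma_{D_2}|^2_{h_{D_2}}-A\delta_0\log|\sigma_E|^2_{h_E}$) and the fact that the $E,D_2$-singular parts of $-A\widetilde{\varphi_{\epsilon,\delta}}'$ at $x_0$ are exactly cancelled by the lower bound on $\widetilde{\varphi_{\epsilon,\delta}}$ in Lemma (5.2), gives $\log S\le C-2\lambda\log\big(|\sigma_E|^2_{h_E}|\sigma_{D_1}|^2_{h_{D_1}}|\sigma_{D_2}|^2_{h_{D_2}}\big)$ for a $\lambda$ depending only on $n,\beta,T,\lambda_1,\mu,\delta_0$, which is the assertion.

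The main obstacle is the bookkeeping near the three divisors: one must choose $A,b$ (hence $\lambda$) so that $H$ is genuinely proper at $E\cup D_1\cup D_2$ — in particular so that the interior maximum $x_0$ stays at a controlled $h_{D_1}$-distance from $D_1$ despite the singular Ricci lower bound of Lemma (5.4), and so that the $E$- and $D_2$-singular parts of $-A\widetilde{\varphi_{\epsilon,\delta}}'$ and of the volume form in (5.1) are matched against one another. Once the interior maximum with these quantitative features is secured, the rest is a routine use of the Schwarz-lemma inequality above together with the two elementary inequalities $S\cdot tr_{\widetilde{\omega_{\epsilon,\delta}}}\chi\ge n$ and $tr_{\chi}\widetilde{\omega_{\epsilon,\delta}}\le C_n(tr_{\widetilde{\omega_{\epsilon,\delta}}}\chi)^{n-1}\widetilde{\omega_{\epsilon,\delta}}^{\,n}/\chi^n$.
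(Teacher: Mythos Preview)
Your approach is essentially the same as the paper's: both use Yau's Schwarz inequality together with the Ricci upper bound of Lemma~5.4 to get a lower bound for $\triangle_{\widetilde{\omega_{\epsilon,\delta}}}\log tr_{\chi}\widetilde{\omega_{\epsilon,\delta}}$, subtract a large multiple of the potential to produce a positive $tr_{\widetilde{\omega_{\epsilon,\delta}}}\chi$ term, add logarithms of defining sections to force properness near $E\cup D_1\cup D_2$, and then run a two-case analysis at the interior maximum using $tr_{\chi}\widetilde{\omega_{\epsilon,\delta}}\le C(tr_{\widetilde{\omega_{\epsilon,\delta}}}\chi)^{n-1}\widetilde{\omega_{\epsilon,\delta}}^{\,n}/\chi^n$ and the volume bound $\widetilde{\omega_{\epsilon,\delta}}^{\,n}\le C|\sigma_{D_1}|^{-2(1-\beta)}_{h_{D_1}}\chi^n$. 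The only cosmetic difference is the packaging of the auxiliary function: the paper takes
\[
H=\log\bigl(|\sigma_E|^{2A}_{h_E}|\sigma_{D_1}|^{2A}_{h_{D_1}}|\sigma_{D_2}|^{2A}_{h_{D_2}}\,tr_{\chi}\widetilde{\omega_{\epsilon,\delta}}\bigr)-A^{2}\widetilde{\varphi_{\epsilon,\delta}}
\]
and uses directly that $A\pi^{*}\eta_T-Ric(h_E)-Ric(h_{D_1})-Ric(h_{D_2})\ge c\,\chi$ for $A$ large, whereas you absorb the $E$ and $D_2$ logarithms into $\widetilde{\varphi_{\epsilon,\delta}}'$ and invoke the already recorded inequality $\theta\ge\delta_1\chi$; after unwinding, your $H$ is the paper's with the exponents $(A,A,A,A^{2})$ replaced by $(A\delta_0,b,A\mu,A)$. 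The case analysis and the extraction of the final exponent $\lambda$ are then identical.
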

\begin{proof}
By a standard calculation one has
$$
 \triangle_{\widetilde{\omega_{\epsilon,\delta}}}\log tr_{\chi}\widetilde{\omega_{\epsilon,\delta}}\geq -Ctr_{\widetilde{\omega_{\epsilon,\delta}}}\chi
 -\frac{C}{|\sigma_{D_1}|^{2\lambda_1}_{h_{D_1}}tr_{\chi}\widetilde{\omega_{\epsilon,\delta}}}.
$$
There is a easy fact that is
$$
 \triangle_{\widetilde{\omega_{\epsilon,\delta}}}\widetilde{\varphi_{\epsilon,\delta}}=n-tr_{\widetilde{\omega_{\epsilon,\delta}}}\pi^{*}\eta_T-\epsilon
 tr_{\widetilde{\omega_{\epsilon,\delta}}}\chi.
$$
Let $H=\log(|\sigma_E|^{2A}_{h_E}|\sigma_{D_1}|^{2A}_{h_{D_1}}|\sigma_{D_2}|^{2A}_{h_{D_2}}tr_{\chi}\widetilde{\omega_{\epsilon,\delta}})-A^2\widetilde{\varphi_{\epsilon,\delta}}$.
Then, on $\tilde{M}\backslash (E\cup D_1\cup D_2)$, we get
$$
\triangle_{\widetilde{\omega_{\epsilon,\delta}}}H\geq -Ctr_{\widetilde{\omega_{\epsilon,\delta}}}\chi-\frac{C}{|\sigma_{D_1}|^{2\lambda_1}_{h_{D_1}}tr_{\chi}\widetilde{\omega_{\epsilon,\delta}}}-A^2n+
Atr_{\widetilde{\omega_{\epsilon,\delta}}}(A\pi^{*}\eta_T-Ric(h_E)-Ric(h_{D_1})-Ric(h_{D_2})).
$$
Notice that when $A$ is sufficiently large we observe that
$$
Atr_{\widetilde{\omega_{\epsilon,\delta}}}(A\pi^{*}\eta_T-Ric(h_E)-Ric(h_{D_1})-Ric(h_{D_2}))\geq (C+1)tr_{\widetilde{\omega_{\epsilon,\delta}}}\chi.
$$
Therefore
$$
\triangle_{\widetilde{\omega_{\epsilon,\delta}}}H\geq tr_{\widetilde{\omega_{\epsilon,\delta}}}\chi-\frac{C}{|\sigma_{D_1}|^{2\lambda_1}_{h_{D_1}}tr_{\chi}\widetilde{\omega_{\epsilon,\delta}}}-A^2n.
$$
Assume that $H$ attains maximum at $x_0$ ($x_0\in \widetilde{M}\backslash (E\cup D_1\cup D_2)$), one deduces
$$
 (|\sigma_{D_1}|^{2\lambda_1}_{h_{D_1}}tr_{\chi}\widetilde{\omega_{\epsilon,\delta}})(tr_{\widetilde{\omega_{\epsilon,\delta}}}\chi-A^2n)(x_0)\leq C
$$
Using an inequality ${\widetilde{\omega_{\epsilon,\delta}}}^n\leq C\frac{\chi^n}{|\sigma_{D_1}|^{2(1-\beta)}_{h_{D_1}}}$, one obtains
$$
\frac{1}{C}|\sigma_{D_1}|^{\frac{2(1-\beta)}{n-1}}_{h_{D_1}}(tr_{\chi}\widetilde{\omega_{\epsilon,\delta}})^{\frac{1}{n-1}} \leq tr_{\widetilde{\omega_{\epsilon,\delta}}}\chi
$$
Thus
\begin{equation}
(|\sigma_{D_1}|^{2\lambda_1}_{h_{D_1}}tr_{\chi}\widetilde{\omega_{\epsilon,\delta}})
(\frac{1}{C}|\sigma_{D_1}|^{\frac{2(1-\beta)}{n-1}}_{h_{D_1}}(tr_{\chi}\widetilde{\omega_{\epsilon,\delta}})^{\frac{1}{n-1}}-A^2n)(x_0)\leq C.
\end{equation}
If $$(tr_{\chi}\widetilde{\omega_{\epsilon,\delta}})^{\frac{1}{n-1}}(x_0)\leq \frac{2CA^2n}{|\sigma_{D_1}|^{\frac{2(1-\beta)}{n-1}}_{h_{D_1}}}(x_0),$$ then there exists $\lambda_2$ such that $$tr_{\chi}\widetilde{\omega_{\epsilon,\delta}}(x_0)\leq \frac{C}{|\sigma_{D_1}|_{h_{D_1}}^{2\lambda_2}}(x_0).$$
Otherwise, $$(tr_{\chi}\widetilde{\omega_{\epsilon,\delta}})^{\frac{1}{n-1}}(x_0)\geq \frac{2CA^2n}{|\sigma_{D_1}|^{\frac{2(1-\beta)}{n-1}}_{h_{D_1}}}(x_0),$$
from (5.6) one knows that $$tr_{\chi}\widetilde{\omega_{\epsilon,\delta}}(x_0)\leq \frac{C}{|\sigma_{D_1}|_{h_{D_1}}^{2\lambda_1}}(x_0).$$
In general one can find $\lambda^{'}$ such that $$tr_{\chi}\widetilde{\omega_{\epsilon,\delta}}(x_0)\leq \frac{C}{|\sigma_{D_1}|_{h_{D_1}}^{2\lambda^{'}}}(x_0).$$
Choose $A>>\lambda^{'}$, one knows $H\leq C$. Therefore the Lemma is proved.
\end{proof}

Let $B$ be a disk centered at $p$ and $\tilde{B} = \pi^{-1}(B)$. Denote $f_1, \cdots , f_N$ as the defining functions of divisors $D_1$ and $D_2$.
\begin{cor}
There exist $C$ and $\lambda$ independent of $\epsilon$ and $\delta$ such that
$$
\widetilde{\omega_{\epsilon,\delta}}|_{\partial \tilde{B}}\leq C(\prod_{i=1}^{N}|f_i|^{-2\lambda}\chi)|_{\partial \tilde{B}}.
$$
\end{cor}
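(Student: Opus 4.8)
The plan is to read the corollary off from Lemma (5.5) by controlling the three weight factors $|\sigma_E|_{h_E}$, $|\sigma_{D_1}|_{h_{D_1}}$, $|\sigma_{D_2}|_{h_{D_2}}$ along the compact hypersurface $\partial\tilde{B}$. First I would observe that since $B$ is a small disk centred at $p$, its boundary $\partial B$ does not contain $p$, so $\partial\tilde{B}=\pi^{-1}(\partial B)$ is disjoint from the exceptional divisor $E=\pi^{-1}(p)$. As $\partial\tilde{B}$ is compact and $\sigma_E$ is a defining section of $E$, the norm $|\sigma_E|^{2}_{h_E}$ is bounded below by a positive constant on $\partial\tilde{B}$; hence the factor $|\sigma_E|^{-2\lambda}_{h_E}$ appearing in the estimate of Lemma (5.5) is bounded above there and can simply be absorbed into the constant.

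Next I would compare the Hermitian norms $|\sigma_{D_1}|_{h_{D_1}}$ and $|\sigma_{D_2}|_{h_{D_2}}$ with the defining functions $f_1,\dots,f_N$. After shrinking $B$ so that $\overline{B}$ lies in a single coordinate chart in which $D$ and $\bar{D}$ are cut out by holomorphic functions (away from $E$ the map $\pi$ is a local biholomorphism, so $D_1,D_2$ are just $D,\bar{D}$ there), each $|\sigma_{D_j}|^{2}_{h_{D_j}}$ equals the corresponding product of $|f_i|^{2}$'s times a smooth strictly positive function. Consequently, on the compact set $\partial\tilde{B}$,
$$
 \frac{1}{|\sigma_{D_1}|^{2\lambda}_{h_{D_1}}|\sigma_{D_2}|^{2\lambda}_{h_{D_2}}}\leq C\prod_{i=1}^{N}|f_i|^{-2\lambda}
$$
for a constant $C$ independent of $\epsilon$ and $\delta$. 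Substituting this bound together with the bound on $|\sigma_E|^{-2\lambda}_{h_E}$ into the inequality of Lemma (5.5) and restricting to $\partial\tilde{B}$ then yields
$$
 \widetilde{\omega_{\epsilon,\delta}}\big|_{\partial\tilde{B}}\leq \frac{C}{|\sigma_E|^{2\lambda}_{h_E}|\sigma_{D_1}|^{2\lambda}_{h_{D_1}}|\sigma_{D_2}|^{2\lambda}_{h_{D_2}}}\chi\Big|_{\partial\tilde{B}}\leq C\Big(\prod_{i=1}^{N}|f_i|^{-2\lambda}\chi\Big)\Big|_{\partial\tilde{B}},
$$
which is the claimed inequality.

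I do not expect any genuine obstacle: this is a routine consequence of Lemma (5.5). The only points to watch are that $B$ be taken small enough that $\overline{B}$ lies in a single chart adapted to $D$ and $\bar{D}$ — which is automatic since $p$ is a point and we are free to shrink $B$ — so that the comparison between the $|\sigma_{D_j}|_{h_{D_j}}$ and the $|f_i|$ is valid and uniform, and that one keeps track of the finitely many smooth positive comparison factors so that a single constant $C$ (with the same exponent $\lambda$ supplied by Lemma (5.5)) works uniformly in $\epsilon$ and $\delta$.
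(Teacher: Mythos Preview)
Your proposal is correct and matches the paper's intent: the paper states this corollary immediately after Lemma~(5.5) with no proof, since it is exactly the routine restriction you describe---absorb the bounded factor $|\sigma_E|^{-2\lambda}_{h_E}$ on $\partial\tilde{B}$ (which is disjoint from $E$) and rewrite $|\sigma_{D_1}|^{-2\lambda}_{h_{D_1}}|\sigma_{D_2}|^{-2\lambda}_{h_{D_2}}$ in terms of the local defining functions $f_1,\dots,f_N$.
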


Let $\hat{\chi}$ be the smooth closed nonnegative closed $(1,1)$-form as the pullback of the Euclidean metric $\sqrt{-1}\sum_{j=1}^{n}dz_j\wedge d\bar{z_j}$
on $B$. $\hat{\chi}$ is a k\"{a}hler metric on $\tilde{B}\backslash E$.
\begin{lem}
There exists $C>0$, a sufficiently small $\epsilon_0>0$ and a smooth Hermitian metric $h_E$ on $L_E$ such that in $\tilde{B}$
$$
 C^{-1}\hat{\chi}\leq \chi\leq C\frac{\hat{\chi}}{|\sigma_E|^2_{h_E}} ,
$$
$$
 \pi^{*}\eta_T-\epsilon_0Ric(h_E)>0.
$$
\end{lem}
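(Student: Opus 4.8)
The plan is to treat the two displayed lines separately: the lower bound $C^{-1}\hat{\chi}\le\chi$ is free from compactness, the construction of the right metric $h_E$ on $L_E$ is the standard blow-up positivity, and both the positivity $\pi^{*}\eta_T-\epsilon_0 Ric(h_E)>0$ and the upper bound rest on one elementary computation of how $\hat{\chi}$ degenerates along $E$. I begin with that computation. Fix a standard blow-up chart with coordinates $(w_1,\dots,w_n)$, in which $\pi(w)=(w_1,w_1w_2,\dots,w_1w_n)$ and $E=\{w_1=0\}$; put $w'=(w_2,\dots,w_n)$. From $dz_1=dw_1$ and $dz_j=w_j\,dw_1+w_1\,dw_j$ $(j\ge2)$ one finds that the Hermitian matrix of $\hat{\chi}=\sqrt{-1}\sum_k dz_k\wedge d\bar z_k$ in the coframe $(dw_1,\dots,dw_n)$ has $(1,1)$-entry $1+|w'|^2$, has $|w_1|^2$ in each remaining diagonal slot, and has all entries carrying the index $1$ of size $O(|w_1|)$. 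An elementary estimate (completing the square in the first variable) shows that on $\{|w'|\le1,\ |w_1|\le1\}$ this matrix is bounded below by $c|w_1|^2 I_n$; since these regions, one per blow-up chart, cover a neighborhood of $E$ in $\tilde{B}$ (after shrinking $B$ to radius $\le1$), we get a uniform $c>0$ with $\hat{\chi}\ge c|w_1|^2\chi$ on a neighborhood $V$ of $E$. In any case $\hat{\chi}$ is a smooth semi-positive $(1,1)$-form on the compact set $\overline{\tilde{B}}$ and $\chi$ is K\"{a}hler there, so $\hat{\chi}\le C\chi$ on $\overline{\tilde{B}}$, which is the lower bound.

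Next, the metric $h_E$. Since $N_{E/\widetilde{M}}=\mathcal{O}_E(E)\cong\mathcal{O}_{\mathbb{P}^{n-1}}(-1)$, the line bundle $L_E^{-1}=\mathcal{O}_{\tilde{B}}(-E)$ is the pullback $q^{*}\mathcal{O}_{\mathbb{P}^{n-1}}(1)$ under the natural projection $q\colon\tilde{B}\subset B\times\mathbb{P}^{n-1}\to\mathbb{P}^{n-1}$. Let $h_{FS}$ be the Fubini-Study metric on $\mathcal{O}_{\mathbb{P}^{n-1}}(1)$, fix $\delta>0$, and set $g=q^{*}h_{FS}\cdot e^{-\delta(|z|^2\circ\pi)}$, a smooth metric on $L_E^{-1}$ over $\tilde{B}$ with curvature $\Theta_g=q^{*}\omega_{FS}+\delta\hat{\chi}$. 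In the chart above $q$ does not involve $w_1$, so $q^{*}\omega_{FS}$ is positive definite in the $dw_2,\dots,dw_n$ directions and degenerate along $dw_1$, while by the previous paragraph the missing $dw_1$ direction is controlled by $\delta\hat{\chi}$; hence $\Theta_g>0$ near $E$. Away from $E$, $\pi$ is biholomorphic, so $\hat{\chi}$ is a genuine K\"{a}hler metric and $\Theta_g\ge\delta\hat{\chi}>0$. Therefore $\Theta_g>0$ on all of $\tilde{B}$. Put $h_E=g^{-1}$ (extended to a global smooth metric on $L_E$ over $\widetilde{M}$); then $Ric(h_E)=-\Theta_g$, and using $\pi^{*}\eta_T\ge0$,
$$
 \pi^{*}\eta_T-\epsilon_0 Ric(h_E)=\pi^{*}\eta_T+\epsilon_0\Theta_g\ge\epsilon_0\Theta_g>0\quad\text{on }\tilde{B}
$$
for every $\epsilon_0>0$.

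Finally, the upper bound: it suffices to prove $|\sigma_E|^2_{h_E}\chi\le C\hat{\chi}$ on $\tilde{B}$. As $\sigma_E$ vanishes transversally to first order along $E=\{w_1=0\}$, we have $|\sigma_E|^2_{h_E}=|w_1|^2 a$ with $a$ smooth and positive, so $|\sigma_E|^2_{h_E}\le C|w_1|^2$ on $\overline{\tilde{B}}$. On the neighborhood $V$ of $E$, combining this with $\hat{\chi}\ge c|w_1|^2\chi$ gives $|\sigma_E|^2_{h_E}\chi\le C\hat{\chi}$; on the compact set $\overline{\tilde{B}}\setminus V$, $\hat{\chi}$ is a K\"{a}hler metric and $|\sigma_E|^2_{h_E}$ is bounded, so again $|\sigma_E|^2_{h_E}\chi\le C\hat{\chi}$. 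Patching the two regions finishes the proof.

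The only genuine computation is the matrix estimate in the first paragraph --- equivalently, the structural fact that $\hat{\chi}$ is uniformly positive in the direction normal to $E$ and of size $|w_1|^2=|\sigma_E|^2$ in the directions along $E$, with cross-terms $O(|w_1|)$; granting it, the choice of $h_E$ and both inequalities follow, and the one point to watch is that building $g$ with $\Theta_g>0$ on all of $\tilde{B}$ genuinely uses the complementary positivity of $q^{*}\omega_{FS}$ and $\hat{\chi}$ near $E$.
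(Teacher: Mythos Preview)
Your argument is correct. The paper states this lemma without proof, treating it as a standard fact about the blow-up of a point; what you have written supplies exactly the missing details. The local matrix computation for $\hat{\chi}$ in a blow-up chart is right (and your ``completing the square'' step can be made explicit via $|w_1|^2|v_j|^2\le 2|w_jv_1+w_1v_j|^2+2|w_j|^2|v_1|^2$, which yields $\hat{\chi}(V,\bar V)\ge \tfrac{1}{1+2R^2}|w_1|^2|v|^2$ on $\{|w'|\le R\}$); the identification $\mathcal{O}_{\tilde B}(-E)\cong q^{*}\mathcal{O}_{\mathbb{P}^{n-1}}(1)$ and the positivity of $q^{*}\omega_{FS}+\delta\hat{\chi}$ are the standard way to produce a metric on $L_E$ with negative curvature over $\tilde B$. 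One small point worth making explicit is that when you pass from $\hat{\chi}\ge c|w_1|^2 I_n$ to $\hat{\chi}\ge c'|w_1|^2\chi$ you are using that $\chi$ is uniformly comparable to the Euclidean form in $w$ on the \emph{compact} piece $\{|w'|\le 1,\ |w_1|\le 1\}$ of the chart; this is fine but deserves a word, since the full chart is unbounded in $w'$. Also note that with your construction the inequality $\pi^{*}\eta_T-\epsilon_0 Ric(h_E)>0$ in $\tilde B$ in fact holds for \emph{every} $\epsilon_0>0$; the ``sufficiently small'' in the statement is only forced if one insists on a single globally constructed $h_E$ with good curvature on all of $\widetilde M$, which is not required here.
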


The following proposition is the main result of this section.
\begin{prop}
There exist $0<\alpha<1$, $\lambda$ and $C>0$ independent of $\epsilon$ and $\delta$ such that
$$
\widetilde{\omega_{\epsilon,\delta}}\leq \frac{C}{|\sigma_E|^{2(1-\alpha)}_{h_E}\prod_{i=1}^{N}|f_i|^{2\lambda}}\chi, \ in \ \tilde{B}.
$$
\end{prop}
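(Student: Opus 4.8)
The plan is to upgrade Lemma~5.5: there the weight attached to $|\sigma_E|_{h_E}$ is an undetermined large power $|\sigma_E|^{2\lambda}_{h_E}$, whereas here the exponent must be pushed \emph{strictly below} $2$ — which is exactly the integrability across $E$ that later yields a finite diameter. As in Lemma~5.5 the tool is the maximum principle, but now on the compact $\tilde B$, whose boundary is already controlled by Corollary~5.7, and with a weighted test function in which the curvature near $E$ is measured against the flat model $\hat\chi$ of Lemma~5.8.

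Concretely, I would fix $\alpha\in(0,1)$ and large constants $A,\lambda$ (chosen at the end) and consider on $\tilde B$
\[
 H=\log\Big(|\sigma_E|^{2(1-\alpha)}_{h_E}\,\textstyle\prod_{i=1}^{N}|f_i|^{2\lambda}\cdot tr_{\chi}\widetilde{\omega_{\epsilon,\delta}}\Big)-A\widetilde{\varphi_{\epsilon,\delta}} .
\]
Differentiating $H$ along $\widetilde{\omega_{\epsilon,\delta}}$: the $\log tr_{\chi}\widetilde{\omega_{\epsilon,\delta}}$ term is estimated by Yau's Schwarz lemma together with the Ricci lower bound of Lemma~5.4, exactly as in the proof of Lemma~5.5, producing $-C\,tr_{\widetilde{\omega_{\epsilon,\delta}}}\chi-C|\sigma_{D_1}|^{-2\lambda_1}_{h_{D_1}}\big(tr_{\chi}\widetilde{\omega_{\epsilon,\delta}}\big)^{-1}$, and near $E$ one measures this trace against $\hat\chi$, whose flatness on $\tilde B\setminus E$ (Lemma~5.8) removes the bisectional-curvature error there; the terms $\log|f_i|^2$ contribute $0$, the $f_i$ being holomorphic; the contribution $-(1-\alpha)\,tr_{\widetilde{\omega_{\epsilon,\delta}}}Ric(h_E)$ of the $|\sigma_E|_{h_E}$-weight is $\ge0$, indeed $\gtrsim tr_{\widetilde{\omega_{\epsilon,\delta}}}\chi$, near $E$ by the positivity of the conormal bundle of $E$, and away from $E$ it is absorbed, together with $-C\,tr_{\widetilde{\omega_{\epsilon,\delta}}}\chi$, into $A\,tr_{\widetilde{\omega_{\epsilon,\delta}}}\pi^{*}\eta_T$ coming from $-A\widetilde{\varphi_{\epsilon,\delta}}$ via $(5.1)$, using $A\pi^{*}\eta_T-(1-\alpha)Ric(h_E)>0$ on $\tilde B$ (Lemma~5.8). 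Dropping also the nonnegative $A\,tr_{\widetilde{\omega_{\epsilon,\delta}}}\pi^{*}\eta_T$ and $A\epsilon\,tr_{\widetilde{\omega_{\epsilon,\delta}}}\chi$ one reaches
\[
 \triangle_{\widetilde{\omega_{\epsilon,\delta}}}H\ \ge\ tr_{\widetilde{\omega_{\epsilon,\delta}}}\chi-\frac{C}{|\sigma_{D_1}|^{2\lambda_1}_{h_{D_1}}\,tr_{\chi}\widetilde{\omega_{\epsilon,\delta}}}-C .
\]

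For fixed $\epsilon,\delta>0$ the metric $\widetilde{\omega_{\epsilon,\delta}}$ is a genuine smooth K\"ahler metric on $\widetilde M$, so $H$ is smooth on $\tilde B\setminus(E\cup D_1\cup D_2)$ and $H\to-\infty$ along $E\cup D_1\cup D_2$; hence the maximum of $H$ over $\tilde B$ is attained either at an interior point $x_0\notin E\cup D_1\cup D_2$ or on $\partial\tilde B$. On $\partial\tilde B$ one has $|\sigma_E|_{h_E}\ge c>0$ (the boundary is disjoint from $E$), so the boundary estimate of Corollary~5.7 gives $H\le C$ there once $\lambda$ exceeds the exponent of that corollary. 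At an interior maximum, $\triangle_{\widetilde{\omega_{\epsilon,\delta}}}H(x_0)\le0$ gives $tr_{\widetilde{\omega_{\epsilon,\delta}}}\chi(x_0)\cdot tr_{\chi}\widetilde{\omega_{\epsilon,\delta}}(x_0)\le C|\sigma_{D_1}|^{-2\lambda_1}_{h_{D_1}}(x_0)+C$; combining this with $tr_{\chi}\widetilde{\omega_{\epsilon,\delta}}\le C\,(tr_{\widetilde{\omega_{\epsilon,\delta}}}\chi)^{n-1}\,\widetilde{\omega_{\epsilon,\delta}}^n/\chi^n$, with $\widetilde{\omega_{\epsilon,\delta}}^n/\chi^n\le C|\sigma_{D_1}|^{-2(1-\beta)}_{h_{D_1}}$ (read off from $(5.1)$ and the upper bound $\widetilde{\varphi_{\epsilon,\delta}}\le C$ of Lemma~5.2 — crucially this carries no power of $|\sigma_E|_{h_E}$ and no $\epsilon$), and with $(tr_{\widetilde{\omega_{\epsilon,\delta}}}\chi)(tr_{\chi}\widetilde{\omega_{\epsilon,\delta}})\ge n$, one deduces $tr_{\chi}\widetilde{\omega_{\epsilon,\delta}}(x_0)\le C|\sigma_{D_1}|^{-2\lambda_3}_{h_{D_1}}(x_0)$ for a uniform $\lambda_3$; since $|\sigma_{D_1}|^{-2}_{h_{D_1}}\le C\prod_i|f_i|^{-2}$ on $\tilde B$, taking $\lambda\ge\lambda_3$ and normalizing $|f_i|\le1$, $|\sigma_E|_{h_E}\le1$ on $\tilde B$ yields $H(x_0)\le C$ (here one also wants $A\delta_0<1-\alpha$, which is arranged by choosing $\delta_0$ small after $A$). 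Hence $\sup_{\tilde B}H\le C$ with $C$ independent of $\epsilon,\delta$, and unravelling this together with $\widetilde{\varphi_{\epsilon,\delta}}\le C$ gives $tr_{\chi}\widetilde{\omega_{\epsilon,\delta}}\le C\,|\sigma_E|^{-2(1-\alpha)}_{h_E}\prod_{i=1}^{N}|f_i|^{-2\lambda}$, which is the asserted inequality.

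The step I expect to be the real obstacle is precisely making the $|\sigma_E|_{h_E}$-exponent come out below $2$: one must verify that near $E$ the bisectional-curvature error in the Schwarz lemma is genuinely dominated by the good terms produced by $-(1-\alpha)Ric(h_E)$ and $\pi^{*}\eta_T$. This is what forces the use of the flat reference $\hat\chi$ of Lemma~5.8 rather than of $\chi$ near $E$, and the careful coordination of the choices of $\alpha\in(0,1)$, $A$, and the small parameters $\epsilon_0$ of Lemma~5.8 and $\delta_0$ of Lemma~5.2; it is also the reason the estimate is stated only for some (non-sharp) $\alpha\in(0,1)$ rather than for the optimal weight.
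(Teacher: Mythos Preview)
Your test function
\[
 H=\log\Big(|\sigma_E|^{2(1-\alpha)}_{h_E}\,\textstyle\prod_{i=1}^{N}|f_i|^{2\lambda}\cdot tr_{\chi}\widetilde{\omega_{\epsilon,\delta}}\Big)-A\widetilde{\varphi_{\epsilon,\delta}}
\]
does not yield the differential inequality you claim. Since you take the trace against $\chi$, the Schwarz--Aubin--Yau inequality produces the bisectional-curvature error $-C\,tr_{\widetilde{\omega_{\epsilon,\delta}}}\chi$, and you must then show
\[
 A\,\pi^*\eta_T-(1-\alpha)\,Ric(h_E)\ \ge\ (C+1)\,\chi
\]
on $\tilde B$. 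This fails near $E$: there $\pi^*\eta_T$ is \emph{degenerate}, so the left side is essentially $(1-\alpha)(-Ric(h_E))$, and while $-Ric(h_E)$ is indeed a positive form near $E$, its ratio to $\chi$ is a fixed bounded quantity with no reason to exceed $C+1$. The coefficient $(1-\alpha)<1$ cannot be enlarged, and increasing $A$ does not help because $A$ multiplies only the degenerate term $\pi^*\eta_T$. Your own last paragraph identifies exactly this obstacle, and the hand-waved remedy ``near $E$ one measures this trace against $\hat\chi$'' is not implemented: the function $H$ is written globally with $tr_\chi$, so $\hat\chi$ never enters the computation.

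The paper's argument is structurally different. One takes as primary test function
\[
 H_{\epsilon,\delta}=\log\Big(|\sigma_E|^{2(1+r)}_{h_E}\,\textstyle\prod_{i}|f_i|^{2\lambda}\cdot tr_{\hat\chi}\widetilde{\omega_{\epsilon,\delta}}\Big)-A\widetilde{\varphi_{\epsilon,\delta}},
\]
with the \emph{flat} reference $\hat\chi$ and an exponent $1+r$ slightly \emph{above} $1$; the price is that the Ricci upper bound of Lemma~5.4, traced against $\hat\chi$, picks up a factor $|\sigma_E|^{-2}_{h_E}$ (via $\chi\le C|\sigma_E|^{-2}_{h_E}\hat\chi$ from Lemma~5.8). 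One then adds a second piece $\tfrac{1}{2C_1}\log\big(\prod_i|f_i|^{2\lambda+2}\,tr_\chi\widetilde{\omega_{\epsilon,\delta}}\big)$ and applies the maximum principle to the sum $G_{\epsilon,\delta}$. At an interior maximum this bounds the product $(tr_{\hat\chi}\widetilde{\omega_{\epsilon,\delta}})\,(tr_\chi\widetilde{\omega_{\epsilon,\delta}})^{1/(2C_1)}$, and the inequality $tr_{\hat\chi}\widetilde{\omega_{\epsilon,\delta}}\ge C^{-1}tr_\chi\widetilde{\omega_{\epsilon,\delta}}$ (again Lemma~5.8) turns this into a bound on $(tr_\chi\widetilde{\omega_{\epsilon,\delta}})^{1+1/(2C_1)}$ carrying only $|\sigma_E|^{-2(1+r)}_{h_E}$. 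Extracting the $(1+1/(2C_1))$-th root gives the exponent $2(1+r)/(1+1/(2C_1))=2(1-\alpha)$ with $\alpha>0$ once $r<1/(2C_1)$. So the sub-$2$ exponent is obtained by a \emph{self-improvement} after the maximum principle, not by inserting it into the test function from the start.
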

\begin{proof}
Let $H_{\epsilon,\delta}=\log (|\sigma_E|^{2(1+r)}_{h_E}\cdot\prod_{i=1}^{N}|f_i|^{2\lambda}\cdot tr_{\hat{\chi}}\widetilde{\omega_{\epsilon,\delta}})-A\widetilde{\varphi_{\epsilon,\delta}}$ for
some sufficiently large $A$ and sufficiently small $r$. There are some facts in $\tilde{B}\backslash(E\cup D_1\cup D_2)$:
\begin{enumerate}
\item  $\triangle_{\widetilde{\omega_{\epsilon,\delta}}}\log|\sigma_E|^2_{h_E}=-tr_{\widetilde{\omega_{\epsilon,\delta}}}(Ric(h_E))$,\\
\item  $\triangle_{\widetilde{\omega_{\epsilon,\delta}}}\log \prod_{i=1}^{N}|f_i|^{2\lambda}=0$, \\
\item $\triangle_{\widetilde{\omega_{\epsilon,\delta}}}\widetilde{\varphi_{\epsilon,\delta}}=n-tr_{\widetilde{\omega_{\epsilon,\delta}}}\pi^{*}\eta_T-\epsilon
 tr_{\widetilde{\omega_{\epsilon,\delta}}}\chi$, \\
\item  $\triangle_{\widetilde{\omega_{\epsilon,\delta}}} \log tr_{\hat{\chi}}\widetilde{\omega_{\epsilon,\delta}}\geq -C tr_{\widetilde{\omega_{\epsilon,\delta}}}\chi-C(|\sigma_E|^2_{h_E}|\sigma_{D_1}|^{2\lambda_1}_{h_{D_1}}tr_{\hat{\chi}}\widetilde{\omega_{\epsilon,\delta}})^{-1}$.
\end{enumerate}
Thus in $\tilde{B}\backslash(E\cup D_1\cup D_2)$ one has
\begin{align*}
\triangle_{\widetilde{\omega_{\epsilon,\delta}}}H_{\epsilon,\delta}&\geq -C tr_{\widetilde{\omega_{\epsilon,\delta}}}\chi-\frac{C}{|\sigma_E|^2_{h_E}|\sigma_{D_1}|^{2\lambda_1}_{h_{D_1}}tr_{\hat{\chi}}\widetilde{\omega_{\epsilon,\delta}}}
-An-(r+1)tr_{\widetilde{\omega_{\epsilon,\delta}}}(Ric(h_E))+A tr_{\widetilde{\omega_{\epsilon,\delta}}}\pi^{*}\eta_T \\
&\geq tr_{\widetilde{\omega_{\epsilon,\delta}}}\chi-\frac{C}{|\sigma_E|^2_{h_E}|\sigma_{D_1}|^{2\lambda_1}_{h_{D_1}}tr_{\hat{\chi}}\widetilde{\omega_{\epsilon,\delta}}}
-An
\end{align*}
where the last inequality base on Lemma (5.8) and the sufficiently large number $A$.

By a similar calculation one gets
$$
\triangle_{\widetilde{\omega_{\epsilon,\delta}}}\log tr_{\chi}\widetilde{\omega_{\epsilon,\delta}}\geq -C_1 tr_{\widetilde{\omega_{\epsilon,\delta}}}\chi
 -\frac{C_1}{|\sigma_{D_1}|^{2\lambda_1}_{h_{D_1}}tr_{\chi}\widetilde{\omega_{\epsilon,\delta}}}.
$$
Let $G_{\epsilon,\delta}=H_{\epsilon,\delta}+\frac{1}{2C_1}\log \prod_{i=1}^{N}|f_i|^{2\lambda+2}tr_{\chi}\widetilde{\omega_{\epsilon,\delta}}$. By the same argument one knows
$$
\triangle_{\widetilde{\omega_{\epsilon,\delta}}}G_{\epsilon,\delta}\geq \frac{1}{2}tr_{\widetilde{\omega_{\epsilon,\delta}}}\chi-An-\frac{1}{2|\sigma_{D_1}|^{2\lambda_1}_{h_{D_1}}tr_{\chi}\widetilde{\omega_{\epsilon,\delta}}}
-\frac{C}{|\sigma_E|^2_{h_E}|\sigma_{D_1}|^{2\lambda_1}_{h_{D_1}}tr_{\hat{\chi}}\widetilde{\omega_{\epsilon,\delta}}}.
$$
By Lemma (5.8) and the above inequality, we have
$$
 \triangle_{\widetilde{\omega_{\epsilon,\delta}}}G_{\epsilon,\delta}\geq \frac{1}{2}tr_{\widetilde{\omega_{\epsilon,\delta}}}\chi-An
-\frac{C}{|\sigma_E|^2_{h_E}|\sigma_{D_1}|^{2\lambda_1}_{h_{D_1}}tr_{\hat{\chi}}\widetilde{\omega_{\epsilon,\delta}}}.
$$
For fixed sufficiently large $\lambda>0$, there exists $C>0$ such that
$$
\sup_{\partial\tilde{B}}G_{\epsilon,\delta}\leq C
$$
from the estimate in Corollary (5.7).

So we assume that
$$
 \sup_{\tilde{B}}G_{\epsilon,\delta}=G_{\epsilon,\delta}(p_{max})
$$
for some $p_{max}\in \tilde{B}\backslash (E\cup D_1\cup D_2)$. Then at $p_{max}$
$$
(tr_{\widetilde{\omega_{\epsilon,\delta}}}\chi-2An)\cdot|\sigma_{D_1}|^{2\lambda_1}_{h_{D_1}}|\sigma_E|^2_{h_E}\cdot tr_{\hat{\chi}}
\widetilde{\omega_{\epsilon,\delta}}(p_{max})\leq C.
$$
Notice that
$$
\frac{1}{C}|\sigma_{D_1}|^{\frac{2(1-\beta)}{n-1}}_{h_{D_1}}(tr_{\chi}\widetilde{\omega_{\epsilon,\delta}})^{\frac{1}{n-1}} \leq tr_{\widetilde{\omega_{\epsilon,\delta}}}\chi.
$$
Then we have
\begin{equation}
(\frac{1}{C}|\sigma_{D_1}|^{\frac{2(1-\beta)}{n-1}}_{h_{D_1}}(tr_{\chi}\widetilde{\omega_{\epsilon,\delta}})^{\frac{1}{n-1}}-2An)
\cdot|\sigma_{D_1}|^{2\lambda_1}_{h_{D_1}}|\sigma_E|^2_{h_E}\cdot tr_{\hat{\chi}}
\widetilde{\omega_{\epsilon,\delta}}(p_{max})\leq C.
\end{equation}
If $$(tr_{\chi}\widetilde{\omega_{\epsilon,\delta}})^{\frac{1}{n-1}}(p_{max})\leq \frac{3CAn}{|\sigma_{D_1}|^{\frac{2(1-\beta)}{n-1}}_{h_{D_1}}}(p_{max}),$$ then we observe that $$tr_{\hat{\chi}}\widetilde{\omega_{\epsilon,\delta}}(p_{max})\leq \frac{C}{|\sigma_E|^2_{h_E}|\sigma_{D_1}|^{2(1-\beta)}_{h_{D_1}}}(p_{max}). $$ Hence $G_{\epsilon,\delta}$ is bounded above by a uniform constant.

Otherwise $$(tr_{\chi}\widetilde{\omega_{\epsilon,\delta}})^{\frac{1}{n-1}}(p_{max})\geq \frac{3CAn}{|\sigma_{D_1}|^{\frac{2(1-\beta)}{n-1}}_{h_{D_1}}}(p_{max}),$$ i.e. $$An\leq \frac{1}{3C}|\sigma_{D_1}|^{\frac{2(1-\beta)}{n-1}}_{h_{D_1}}
\cdot (tr_{\chi}\widetilde{\omega_{\epsilon,\delta}})^{\frac{1}{n-1}}(p_{max})$$
Then by (5.10) one gets
$$
\log|\sigma_E|^2_{h_E}+\log tr_{\hat{\chi}}\widetilde{\omega_{\epsilon,\delta}}+\frac{1}{n-1}\log tr_{\chi}\widetilde{\omega_{\epsilon,\delta}}
+\log|\sigma_{D_1}|_{h_{D_1}}^{2\lambda_1+\frac{2(1-\beta)}{n-1}}(p_{max})\leq C.
$$
Moreover, combining with the Lemma (5.2) and choosing sufficiently large $\lambda$ one knows
$$
G_{\epsilon,\delta}(p_{max})\leq C
$$
In sun, in all cases, we have $G_{\epsilon,\delta}\leq C$. Then
$$
\log(|\sigma_E|_{h_E}^{2(1+r)}\prod_{i=1}^{N}|f_i|^{2\lambda+\frac{2\lambda+2}{2C_1}}\cdot (tr_{\hat{\chi}}\widetilde{\omega_{\epsilon,\delta}})\cdot
(tr_{\chi}\widetilde{\omega_{\epsilon,\delta}})^{\frac{1}{2C_1}})\leq C.
$$
Noting that $tr_{\hat{\chi}}\widetilde{\omega_{\epsilon,\delta}}\geq C^{-1}tr_{\chi}\widetilde{\omega_{\epsilon,\delta}}$, we have
$$
(tr_{\chi}\widetilde{\omega_{\epsilon,\delta}})^{1+\frac{1}{2C_1}}\leq \frac{C}{|\sigma_E|_{h_E}^{2(1+r)}\prod_{i=1}^{N}|f_i|^{2\Lambda}}.
$$
If we choose $r=\frac{1}{10C_1}$, then $\frac{1+r}{1+(2C_1)^{-1}}=1-\alpha$ for some $\alpha\in (0,1)$. The Proposition is proved.
\end{proof}
\begin{cor}
Assume as above. There exist $\alpha>0$, $\lambda>0$ and $C>0$ such that
$$
 \pi^{*}\omega_T\leq \frac{C}{|\sigma_E|^{2(1-\alpha)}_{h_E}\prod_{i=1}^{N}|f_i|^{2\lambda}}\chi, \ in \ \tilde{B}.
$$
\end{cor}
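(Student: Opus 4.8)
The plan is to obtain the corollary by letting $\epsilon\to 0$ and $\delta\to 0$ in the preceding Proposition. The estimate there is stated for the approximants $\widetilde{\omega_{\epsilon,\delta}}$, so the real task is to show that, after extracting a subsequence, $\widetilde{\omega_{\epsilon,\delta}}$ converges to $\pi^{*}\omega_{T}$ in $C^{\infty}_{\mathrm{loc}}$ on $\widetilde B\setminus(E\cup D_{1}\cup D_{2})$; on the complement $E\cup D_{1}\cup D_{2}$ the right-hand side of the asserted inequality is $+\infty$, so nothing needs to be checked there.

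First I would record uniform local estimates. Fix a compact $K\Subset\widetilde B\setminus(E\cup D_{1}\cup D_{2})$. On $K$ the functions $|\sigma_{E}|_{h_{E}}$, $|\sigma_{D_{1}}|_{h_{D_{1}}}$ and $|\sigma_{D_{2}}|_{h_{D_{2}}}$ are bounded away from zero, so by Lemma 5.2 the potentials $\widetilde{\varphi_{\epsilon,\delta}}$ are bounded on $K$ uniformly in $\epsilon,\delta$, and hence the right-hand side of the Monge-Amp\`{e}re equation (5.1) lies between two positive constants on $K$. Combined with the bound $\widetilde{\omega_{\epsilon,\delta}}\le C_{K}\chi$ on $K$ coming from the preceding Proposition, equation (5.1) forces $C_{K}^{-1}\chi\le\widetilde{\omega_{\epsilon,\delta}}\le C_{K}\chi$ on $K$ (the product of the eigenvalues relative to $\chi$ is bounded below while each eigenvalue is bounded above). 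Thus (5.1) is uniformly elliptic on $K$, and the interior Evans--Krylov estimate together with Schauder bootstrap gives uniform $C^{l}(K)$ bounds on $\widetilde{\varphi_{\epsilon,\delta}}$ for every $l$. A diagonal argument then produces a subsequence with $\widetilde{\varphi_{\epsilon_{i},\delta_{i}}}\to\widetilde{\varphi_{0}}$ in $C^{\infty}_{\mathrm{loc}}(\widetilde B\setminus(E\cup D_{1}\cup D_{2}))$, which is in addition a bounded $\pi^{*}\eta_{T}$-plurisubharmonic function on $\widetilde B$.

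It remains to identify $\widetilde{\omega_{0}}:=\pi^{*}\eta_{T}+\sqrt{-1}\partial\overline{\partial}\widetilde{\varphi_{0}}$ with $\pi^{*}\omega_{T}$. Since $\pi$ restricts to a biholomorphism $\widetilde B\setminus E\to B\setminus\{p\}$, and using $\widetilde\Omega=|\sigma_{E}|^{-2(n-1)}_{h_{E}}\pi^{*}\Omega$ together with $h_{D_{1}}=\pi^{*}h_{D}$, the limit of equation (5.1) is exactly the $\pi$-pullback of the Monge-Amp\`{e}re equation satisfied by $\omega_{T}$; because the factor $e^{\widetilde{\varphi}/T}$ pins down the additive normalization, uniqueness of suitably bounded solutions (via a barrier argument along $E\cup D_{1}\cup D_{2}$ as in the uniqueness proof of Section 2, or via \cite{EGZ}) yields $\widetilde{\varphi_{0}}=\pi^{*}u_{T}$, hence $\widetilde{\omega_{\epsilon_{i},\delta_{i}}}\to\pi^{*}\omega_{T}$ in $C^{\infty}_{\mathrm{loc}}(\widetilde B\setminus(E\cup D_{1}\cup D_{2}))$. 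Passing the pointwise inequality of the preceding Proposition to this limit and using that $\widetilde B\setminus(E\cup D_{1}\cup D_{2})$ is dense in $\widetilde B$, the claimed bound follows. I expect the main obstacle to be precisely this last step: one has to make sure that the degeneracy of the background form $\pi^{*}\eta_{T}$ along $E$ and the singular densities in (5.1) do not destroy the uniform interior ellipticity away from the divisors, and that the $C^{\infty}_{\mathrm{loc}}$ limit is genuinely $\pi^{*}\omega_{T}$ rather than some other $\pi^{*}\eta_{T}$-plurisubharmonic solution — which is where the matching of $L_{E}$, $L_{D_{1}}$ with the blow-up data and the uniqueness theorem for bounded Monge-Amp\`{e}re potentials enter.
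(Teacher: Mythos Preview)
Your approach is correct and is essentially what the paper does: the corollary is stated there without proof, as an immediate consequence of passing the estimate of Proposition~5.9 to the limit $\epsilon,\delta\to 0$, and you have simply supplied the details (uniform local higher-order bounds via Evans--Krylov plus bootstrap, and identification of the limit by uniqueness). One small refinement: the identification $\widetilde{\varphi_{0}}=\pi^{*}u_{T}$ should be carried out globally on $\widetilde{M}$ (using Lemmas~5.2 and~5.5 for the uniform $C^{0}$ and $C^{2}$ bounds away from $E\cup D_{1}\cup D_{2}$) rather than only on $\widetilde B$, so that the global uniqueness theorem for bounded solutions of the degenerate Monge--Amp\`ere equation actually applies.
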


\textbf{Case} 3.

Let $p\in \bar{D}\backslash D$ be any point and $\pi$, $\widetilde{M}$, $E$, $h_E$, $\sigma_E$, $D_2$, $h_{D_2}$, $\sigma_{D_2}$, $\chi$ and $\tilde{\Omega}$ be the same as \textbf{Case 2}. Consider the following family of Monge-Amp\`{e}re equations on $\widetilde{M}$
$$
(\pi^{*}\eta_T+\epsilon\chi+\sqrt{-1}\partial\bar{\partial}\widetilde{\varphi_{\epsilon,\delta}})^n=
e^{\frac{1}{T}\widetilde{\varphi_{\epsilon,\delta}}}(\epsilon^2+|\sigma_E|^2_{h_E})^{n-1}\frac{\tilde{\Omega}}{(\delta^2+|s_D|^2_{h_D})^{1-\beta}}.
$$
By Yau's solution to Calabi conjecture \cite{Y2}, the equation has a unique smooth solution $\widetilde{\varphi_{\epsilon,\delta}}$; moreover
$$
\widetilde{\omega_{\epsilon,\delta}}=\pi^{*}\eta_T+\epsilon\chi+\sqrt{-1}\partial\bar{\partial}\widetilde{\varphi_{\epsilon,\delta}}
$$
is a smooth K\"{a}hler metric on $\widetilde{M}$.

Let $B$ be a disk centered at $p$ such that $B\cap D=\emptyset$ and $\tilde{B}=\pi^{-1}(B)$. Denote $f_1, \cdots , f_{N_1}$ as the defining functions of divisor $D_2$. By the same argument of Proposition (5.9) we have
\begin{cor}
There exist $0<\alpha<1$, $\lambda$ and $C>0$ independent of $\epsilon$ and $\delta$ such that
$$
\pi^{*}\omega_T\leq \frac{C}{|\sigma_E|^{2(1-\alpha)}_{h_E}\prod_{i=1}^{N_1}|f_i|^{2\lambda}}\chi, \ in \ \tilde{B}.
$$
\end{cor}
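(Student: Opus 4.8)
The plan is to repeat the argument of \textbf{Case 2} --- i.e. Lemma (5.2), Lemma (5.4), Lemma (5.5), Corollary (5.7), Lemma (5.8) and Proposition (5.9) --- for the Monge--Amp\`ere equation of \textbf{Case 3} on $\widetilde{M}$, and then to pass to the limit $\epsilon,\delta\to 0$ as in Corollary (5.11). The situation is in fact simpler than \textbf{Case 2}: since $p\in\bar D\setminus D$ and $B\cap D=\emptyset$, on $\tilde B$ the function $|s_D|^2_{h_D}$ is bounded away from $0$, so $(\delta^2+|s_D|^2_{h_D})^{1-\beta}$ is uniformly comparable to a positive constant there, and the divisor $D_1$ of \textbf{Case 2} plays no role inside $\tilde B$. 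First I would establish, uniformly in $\epsilon,\delta$ and on $\tilde B$, the analogues of Lemma (5.2) (a two-sided bound for $\widetilde{\varphi_{\epsilon,\delta}}$ modulo $E\cup D_2$, via Jensen and the mean value inequality for the upper bound and a barrier against $E\cup D_2$ for the lower bound), of Lemma (5.4) (here $Ric(\widetilde{\omega_{\epsilon,\delta}})\leq -\tfrac1T\widetilde{\omega_{\epsilon,\delta}}+C\chi$, with no singular factor since $D\cap\tilde B=\emptyset$), of Lemma (5.5) (a global Laplacian estimate $\widetilde{\omega_{\epsilon,\delta}}\leq C|\sigma_E|^{-2\lambda}_{h_E}|\sigma_{D_2}|^{-2\lambda}_{h_{D_2}}\chi$) and of Lemma (5.8); in particular this gives, exactly as in Corollary (5.7), the boundary bound $\widetilde{\omega_{\epsilon,\delta}}|_{\partial\tilde B}\leq C\big(\prod_{i=1}^{N_1}|f_i|^{-2\lambda}\,\chi\big)|_{\partial\tilde B}$.

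With these in hand I would carry out the computation of Proposition (5.9) (Yau's Schwarz lemma plus the maximum principle) verbatim on $\tilde B$. One sets $H_{\epsilon,\delta}=\log\big(|\sigma_E|^{2(1+r)}_{h_E}\prod_{i=1}^{N_1}|f_i|^{2\lambda}\,tr_{\hat{\chi}}\widetilde{\omega_{\epsilon,\delta}}\big)-A\widetilde{\varphi_{\epsilon,\delta}}$ for $r$ small and $A$ large, uses the facts $\triangle_{\widetilde{\omega_{\epsilon,\delta}}}\log|\sigma_E|^2_{h_E}=-tr_{\widetilde{\omega_{\epsilon,\delta}}}Ric(h_E)$ and $\triangle_{\widetilde{\omega_{\epsilon,\delta}}}\log\prod_i|f_i|^{2\lambda}=0$, together with Lemma (5.8) and the positivity $\pi^{*}\eta_T-\epsilon_0 Ric(h_E)>0$, to absorb the negative curvature contribution into $tr_{\widetilde{\omega_{\epsilon,\delta}}}\chi$, and then adds a small multiple of $\log\big(\prod_i|f_i|^{2\lambda+2}\,tr_{\chi}\widetilde{\omega_{\epsilon,\delta}}\big)$ to form $G_{\epsilon,\delta}$ and control the residual term of type $(|\sigma_E|^2_{h_E}\,tr_{\hat{\chi}}\widetilde{\omega_{\epsilon,\delta}})^{-1}$. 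Since $G_{\epsilon,\delta}$ is bounded on $\partial\tilde B$ and cannot attain an interior maximum on $E\cup D_2$, evaluating at an interior maximum point and feeding the elementary inequality $\tfrac1C(tr_{\chi}\widetilde{\omega_{\epsilon,\delta}})^{\frac1{n-1}}\leq tr_{\widetilde{\omega_{\epsilon,\delta}}}\chi$ (valid here because $\widetilde{\omega_{\epsilon,\delta}}^n\leq C\chi^n$ on $\tilde B$) into the dichotomy of Proposition (5.9) yields $G_{\epsilon,\delta}\leq C$. Choosing $r$ appropriately, this rearranges to $\widetilde{\omega_{\epsilon,\delta}}\leq C|\sigma_E|^{-2(1-\alpha)}_{h_E}\prod_{i=1}^{N_1}|f_i|^{-2\lambda}\chi$ on $\tilde B$ for some $\alpha\in(0,1)$, with $C,\alpha,\lambda$ independent of $\epsilon,\delta$.

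Finally I would let $\epsilon,\delta\to 0$. By the uniform estimates together with the Evans--Krylov and bootstrap arguments, a subsequence of $\widetilde{\varphi_{\epsilon,\delta}}$ converges in $C^{\infty}_{loc}$ on $\tilde B\setminus(E\cup D_2)$; since $\pi$ is a biholomorphism off $E$ and $D$ does not meet $\tilde B$, the limit solves on $\pi(\tilde B)\setminus\{p\}$ the Monge--Amp\`ere equation characterizing $\omega_T$, hence by the uniqueness in Theorem (1.4) the limit K\"{a}hler form is $\pi^{*}\omega_T$ on $\tilde B\setminus E$; passing the pointwise inequality to the limit gives the asserted estimate. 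The main obstacle is, as already in Proposition (5.9), the bookkeeping of the exponents $r,\lambda,A$: one must arrange simultaneously that (i) $H_{\epsilon,\delta}$ and $G_{\epsilon,\delta}$ cannot achieve their supremum on $E\cup D_2$, (ii) the negative terms coming from $Ric(h_E)$ and from the crude Laplacian bound are swallowed by $A\,tr_{\widetilde{\omega_{\epsilon,\delta}}}\pi^{*}\eta_T$ and by $tr_{\widetilde{\omega_{\epsilon,\delta}}}\chi$, and (iii) the order of vanishing forced along $E$ is only the fractional power $|\sigma_E|^{2(1-\alpha)}_{h_E}$ with $\alpha<1$ --- this last point being exactly what makes $\pi^{*}\omega_T$, hence $\omega_T$, of finite diameter across the exceptional divisor.
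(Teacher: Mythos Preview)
Your proposal is correct and follows exactly the approach of the paper, which records the result with the single line ``By the same argument of Proposition (5.9) we have'' and then passes to the limit as in Corollary (5.11). Your observation that on $\tilde B$ the factor $(\delta^2+|s_D|^2_{h_D})^{1-\beta}$ is uniformly bounded (since $B\cap D=\emptyset$) is precisely the simplification that makes Case~3 a strict special case of the Case~2 computations; note only that the global analogue of Lemma~(5.5) will still carry a pole along $\pi^{-1}(D)$, but this is harmless on $\partial\tilde B$ and so does not affect the boundary bound or the interior maximum-principle argument.
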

From now on we turn to the Gromov-Hausdorff convergence. By the argument of \cite{NTZ}, Corollary (5.11) and Corollary (5.12), we immediately conclude the following proposition.
\begin{prop}
$\Phi_{T}: \ M_T\rightarrow \Phi(M)$ s a homeomorphism. As a consequence, the diameter of $M_T$ is finite. Furthermore, there exists $C$
such that $$diam(M,\omega_t)\leq C, \ \forall \ t\in[T-\bar{t},T).$$
\end{prop}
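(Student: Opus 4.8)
The plan is to deduce the statement from the local metric estimates of Cases~1--3 (together with Corollary~(5.11) and Corollary~(5.12)) in three steps. First I would show that $(M_T,d_T)$ has finite diameter; second, conclude that $M_T$ is compact, so that $\Phi_T$ — already established to be a continuous injective local homeomorphism — is a closed embedding of $M_T$ into the Hausdorff space $\Phi(M)$, and since its image is closed and contains the dense set $\Phi(\mathcal R)$ it is all of $\Phi(M)$; third, transport the diameter bound back to $(M,\omega_t)$ via the global Gromov--Hausdorff convergence $(M,\omega_t,x_0)\to(M_T,d_T,x_T)$ of Section~4.

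For the first step I would use that $(M_T,d_T)$ is the metric completion of $(M_{reg}\backslash D,\,d_{\omega_T})$, and that, $D\cup\bar D\cup M_{sing}$ having real codimension $\ge 2$, the set $M\backslash(D\cup\bar D)$ is dense in $M_{reg}\backslash D$ and carries the same length metric; so it suffices to bound $\mathrm{diam}\big(M\backslash(D\cup\bar D),\omega_T\big)$. Since $M$ is compact, I would cover it by a compact core $K_0\subset M_{reg}\backslash D$ — a neighbourhood of the complement of small tubular neighbourhoods of $D\cup\bar D$, on which $\omega_T$ is smooth, so $(K_0,\omega_T)$ is a compact manifold with boundary of finite diameter — together with finitely many small balls $B_{p_1},\dots,B_{p_m}$ centred at points of $D\cup\bar D$. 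On a ball $B_p$ one of the three cases of Section~5 applies: for $p\in D\backslash\bar D$, Case~1 gives $\omega_T\le C_U\,\omega_0/|s_D|^{2(1-\beta)}_{h_D}$ near $p$, whose radial length element $\sim|s_D|^{-(1-\beta)}$ is integrable because $\beta\in(0,1)$, so $(B_p\backslash D,\omega_T)$ has finite diameter; for $p\in D\cap\bar D$ or $p\in\bar D\backslash D$ one passes to the blow-up $\widetilde M$ at $p$ and invokes Corollary~(5.11), respectively Corollary~(5.12), namely $\pi^{*}\omega_T\le C\,|\sigma_E|^{-2(1-\alpha)}_{h_E}\big(\prod_i|f_i|^{2\lambda}\big)^{-1}\chi$ on $\widetilde B=\pi^{-1}(B_p)$; the exponent $1-\alpha\in(0,1)$ transverse to the exceptional divisor $E$ makes curves converging to $E$ have finite $\pi^{*}\omega_T$-length, and pushing down bounds $\mathrm{diam}\big(B_p\backslash(D\cup\bar D),\omega_T\big)$. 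Concatenating along the finitely many pieces of the cover (whose pairwise overlaps are connected) then yields $\mathrm{diam}\big(M\backslash(D\cup\bar D),\omega_T\big)<\infty$, hence $\mathrm{diam}(M_T,d_T)<\infty$. Finally, $M_T$ inherits from the uniform bound $\mathrm{Ric}(\omega_{t,\epsilon})\ge -\frac1{T-\bar t}\omega_{t,\epsilon}$ a Bishop--Gromov volume-doubling property; being doubling and of finite diameter it is totally bounded, hence (being complete) compact. The remaining two steps are then formal: a continuous injection of a compact space into a Hausdorff space is a closed embedding, and so, having dense image, $\Phi_T$ is a homeomorphism onto $\Phi(M)$; and pointed Gromov--Hausdorff convergence with compact limit forces $\mathrm{diam}(M,\omega_t)\to\mathrm{diam}(M_T)<\infty$, which with boundedness of $\mathrm{diam}(M,\omega_t)$ on each compact subinterval $[T-\bar t,T-\delta]$ gives $\mathrm{diam}(M,\omega_t)\le C$ for all $t\in[T-\bar t,T)$.

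The main obstacle is the heart of the first step: turning the \emph{crude} singular upper bounds of Corollary~(5.11) and Corollary~(5.12) — which still blow up like $|\sigma_{D_i}|^{-2\lambda}$ with $\lambda$ possibly large near the proper transforms $D_1,D_2$ — into a genuine finite-diameter (equivalently, total-boundedness) statement on $B_p\backslash(D\cup\bar D)$. This forces one to choose the connecting curves so as to exploit the favourable exponent $1-\alpha<1$ transverse to $E$ while routing around $D_1\cup D_2$, using that there the metric is either of conical type with angle $2\pi\beta$ or controlled by the degeneration of $\eta_T$ along $\bar D$; this local analysis is carried out exactly as in \cite{NTZ}. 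Everything else — the covering argument, the compactness conclusion, and the transfer to $\omega_t$ — is routine.
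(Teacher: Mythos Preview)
Your proposal is correct and follows essentially the same approach as the paper, which simply defers to the argument of \cite{NTZ} together with Corollaries~(5.11) and~(5.12). The only difference is the logical order: the paper's phrasing (``$\Phi_T$ is a homeomorphism; as a consequence the diameter of $M_T$ is finite'') indicates that in \cite{NTZ} one first proves surjectivity of $\Phi_T$ directly from the local length estimates---showing that any sequence in $\mathcal R=M_{reg}\setminus D$ converging in $M$ to a point of $M_{sing}\cup D$ is $d_{\omega_T}$-Cauchy---and then deduces compactness and finite diameter, whereas you reverse this and prove finite diameter first; both orders rest on exactly the same local analysis that you correctly identify as the crux.
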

\begin{bibdiv}

\begin{biblist}
\bib {An}{article}{
    author={M. Anderson}
     title={Convergence and rigidity of manifolds under Ricci curvature bounds},
     publisher={},
     place={},
      journal={Invent. Math.},
            series={},
    volume={102},
    date={1990},
    number={ },
     pages={429-445},
}
\bib {CGP}{article}{
    author={F. Campana}
    author={H. Guenancia}
    author={M, Paun}
     title={Metrics with cone singularities along normal crossing divisors and
holomorphic tensor fields},
     publisher={},
     place={},
      journal={Ann. Scient. Ec. Norm. Sup.},
            series={},
    volume={46},
    date={2013},
    number={ },
     pages={879-916},
}

\bib {ChCo2}{article}{
    author={J. Cheeger}
    author={T. H. Colding}
     title={On the structure of spaces with Ricci curvature bounded below II},
     publisher={},
     place={},
      journal={J. Diff. Geom.},
            series={},
    volume={54},
    date={2000},
    number={ },
     pages={13-35},
}
\bib {CDS1}{article}{
    author={X.X. Chen}
    author={S. Donaldson}
    author={S. Sun}
     title={K\"{a}hler-Einstein metric on Fano manifolds, I: approximation
of metrics with cone singularities},
     publisher={},
     place={},
      journal={J. AMS.},
            series={},
    volume={},
    date={2014},
    number={ },
     pages={1-15},
}

\bib {CDS2}{article}{
    author={X.X. Chen}
    author={S. Donaldson}
    author={S. Sun}
     title={K\"{a}hler-Einstein metric on Fano manifolds, II: Limits with cone angle less than
2$\pi$},
     publisher={},
     place={},
      journal={J. Amer. Math. Soc},
            series={},
    volume={28},
    date={2015},
    number={1 },
     pages={199-234},
}
\bib {CDS3}{article}{
    author={X.X. Chen}
    author={S. Donaldson}
    author={S. Sun}
     title={K\"{a}hler-Einstein metric on Fano manifolds, III: Limits as cone angle approaches
2$\pi$ and completion of the main proof},
     publisher={},
     place={},
      journal={J. Amer. Math. Soc},
            series={},
    volume={28},
    date={2015},
    number={1 },
     pages={235-278},
}
\bib {Co}{article}{
    author={T. H. Colding}
     title={Ricci curvature and volume convergece},
     publisher={},
     place={},
      journal={Annal. of Math.},
            series={},
    volume={145},
    date={1997},
    number={ },
     pages={477-501},
}
\bib {CN}{article}{
    author={T. H. Colding}
    author={A. Naber}
     title={Sharp H\"{o}lder continuity of tangent cones for spaces with a lower Ricci curvature bound and applications},
     publisher={},
     place={},
      journal={Annal. of Math.},
            series={},
    volume={176},
    date={2012},
    number={ },
     pages={1173-1229},
}
\bib {EGZ}{article}{
    author={P. Eyssidieux}
    author={V. Guedj}
    author={Zeriahi}
     title={A Singular K：ahler-Einstein metrics},
     publisher={},
     place={},
      journal={J. Amer. Math. Soc},
            series={},
    volume={22},
    date={2009},
    number={ },
     pages={607-639},
}
\bib {GT}{book}{
    author={D. Gilbarg},
    author={N. S. Trudinger},
     title={Elliptic partial differential equations of second order},
     publisher={Springer},
     place={},
      journal={ },
            series={},
    volume={},
    date={1983},
    number={ },
     pages={},
}
\bib {GP}{article}{
    author={Henri Guenancia}
    author={ Mihai P\'{a}un}
     title={Conic singularities metrics with prescribed Ricci curvature: the case of general cone angles along normal crossing divisors,},
     publisher={},
     place={},
      journal={},
            series={},
    volume={arXiv:1307.6375},
    date={},
    number={ },
     pages={},
}
\bib {H}{article}{
    author={R. Hamilton}
     title={Three manifolds with positive Ricci curvature},
     publisher={},
     place={},
      journal={J. Differential Geom. },
            series={},
    volume={17},
    date={1982},
    number={2 },
     pages={255-306},
}
\bib {Je}{article}{
    author={T. Jeffres}
     title={Uniqueness of K：ahler-Einstein cone metrics},
     publisher={},
     place={},
      journal={Publ. Math},
            series={},
    volume={44},
    date={2000},
    number={ },
     pages={},
}
\bib {Ka}{article}{
    author={Y. Kawamata}
     title={The cone of curves of algebraic varieties},
     publisher={},
     place={},
      journal={Ann. of Math.,},
            series={},
    volume={119},
    date={1984},
    number={ },
     pages={603-633},
}
\bib {KoMo}{book}{
    author={J. Kollar},
    author={F. Mori},
     title={Birational geometry of algebraic varieties},
     publisher={Cambridge University Press},
     place={},
      journal={ },
            series={Cambridge Tracts in Mathematics},
    volume={134},
    date={1998},
    number={ },
     pages={},
}
\bib {NT}{article}{
    author={G. La Nave}
    author={G. Tian}
     title={A continuity method to construct canonical metrics,},
     publisher={},
     place={},
      journal={},
            series={},
    volume={arXiv:1410.3157},
    date={},
    number={ },
     pages={},
}
\bib {NTZ}{article}{
    author={G. La Nave}
    author={G. Tian}
    author={Z. L. Zhang}
     title={Bounding diameter of singular K\"{a}hler metric.},
     publisher={},
     place={},
      journal={ },
            series={},
    volume={arXiv:1503.03159v1},
    date={},
    number={ },
     pages={},
}

\bib {RZ}{article}{
    author={X.C. Rong}
    author={Y.G. Zhang}
     title={Continuity of extremal transitions and flops for Calabi-Yau manifolds},
     publisher={},
     place={},
      journal={J. Diff. Geom.},
            series={},
    volume={89},
    date={2011},
    number={ },
     pages={233-269},
}
\bib {S}{article}{
    author={L. M. Shen}

     title={Maximal time existence of unnormalized conical K\"{a}hler Ricci flow,},
     publisher={},
     place={},
      journal={},
            series={},
    volume={arXiv:1411.7284},
    date={},
    number={ },
     pages={},
}
\bib {So}{article}{
    author={J. Song}
     title={Riemannian geometry of K：ahler-Einstein currents,},
     publisher={},
     place={},
      journal={ },
            series={},
    volume={arXiv:1404.0445},
    date={},
    number={ },
     pages={},
}

\bib {So2}{article}{
    author={J. Song}
     title={Riemannian geometry of K：ahler-Einstein currents II, an analytic proof of Kawamata¨s base point free theorem,},
     publisher={},
     place={},
      journal={ },
            series={},
    volume={arXiv:1409.8374},
    date={},
    number={ },
     pages={},
}

\bib {ST1}{article}{
    author={J. Song}
    author={G. Tian}
     title={The K：ahler-Ricci flow on surfaces of positive Kodaira dimension},
     publisher={},
     place={},
      journal={Invent. Math.},
            series={},
    volume={170},
    date={2007},
    number={ },
     pages={609-653},
}

\bib {ST2}{article}{
    author={J. Song}
    author={G. Tian}
     title={Canonical measures and K：ahler-Ricci flow},
     publisher={},
     place={},
      journal={J. AMS.},
            series={},
    volume={25},
    date={2012},
    number={ },
     pages={303-353},
}

\bib {ST3}{article}{
    author={J. Song}
    author={G. Tian}
     title={The K：ahler-Ricci flow through singularities,},
     publisher={},
     place={},
      journal={},
            series={},
    volume={arXiv:0909.4898},
    date={},
    number={ },
     pages={},
}

\bib {SW1}{article}{
    author={J. Song}
    author={B. Weinkove}
     title={Contracting exceptional divisors by the K：ahler-Ricci flow},
     publisher={},
     place={},
      journal={Duke Math. J.},
            series={},
    volume={162},
    date={2013},
    number={ },
     pages={367-415},
}

\bib {SW2}{article}{
    author={J. Song}
    author={B. Weinkove}
     title={Contracting exceptional divisors by the K：ahler-Ricci flow II},
     publisher={},
     place={},
      journal={Proc. Lond. Math. Soc.},
            series={3},
    volume={108},
    date={2014},
    number={6 },
     pages={1529-1561},
}

\bib {SY}{article}{
    author={J. Song}
    author={Y. Yuan}
     title={Metric flips with Calabi ansatz},
     publisher={},
     place={},
      journal={ Geom. Funct. Anal.},
            series={22},
    volume={},
    date={2012},
    number={1 },
     pages={240-265},
}
\bib {GS}{article}{
    author={G. Sz\'{e}kelyhidi}
     title={The partial $C^0$-estimate along the continuity method.},
     publisher={},
     place={},
      journal={},
            series={},
    volume={arXiv:1310.8471v1},
    date={},
    number={ },
     pages={},
}
\bib {T1}{article}{
    author={G. Tian}
     title={K-stability and K：ahler-Einstein metrics},
     publisher={},
     place={},
      journal={},
            series={},
    volume={arXiv:1211.4669},
    date={},
    number={ },
     pages={},
}
\bib {T}{article}{
    author={G. Tian}
     title={New Progresses and Results on K：ahler-Ricci Flow},
     publisher={},
     place={},
      journal={G\'{e}om\'{e}trie diff\'{e}rentielle,
physique math\'{e}matique, math\'{e}matiques et soci\'{e}t\'{e}.},
            series={},
    volume={II. Ast\'{e}risque},
    date={},
    number={322 },
     pages={71-92},
}
\bib {TZ}{article}{
    author={G. Tian}
    author={Z. Zhang}
     title={On the K：ahler-Ricci flow on projective manifolds of general type},
     publisher={},
     place={},
      journal={Chin. Ann. Math. },
            series={},
    volume={27B},
    date={2006},
    number={ },
     pages={179-192},
}

\bib {Y}{article}{
    author={S. T. Yau}
     title={A general Schwarz lemma for K：ahler manifolds},
     publisher={},
     place={},
      journal={Amer. J. of Math.},
            series={},
    volume={100},
    date={1978},
    number={ },
     pages={197-208},
}
\bib {Y2}{article}{
    author={S.T. Yau}
     title={On the Ricci curvature of a compact K\"{a}hler manifold and the complex Monge-Amp\`{e}re equation I},
     publisher={},
     place={},
      journal={Commun. Pure Appl. Math},
            series={},
    volume={31},
    date={1978},
    number={ },
     pages={339-411},
}

\bib {Zh}{article}{
    author={Z. Zhang}
     title={On Degenerate Monge-Ampere Equations over Closed K\"{a}hler Manifolds},
     publisher={},
     place={},
      journal={Intern. Math. Res. Notices},
            series={},
    volume={},
    date={2006},
    number={ },
     pages={},
}

\end{biblist}
\end{bibdiv}
\end{document}